\DeclareMathOperator{\id}{id}
\DeclareMathOperator{\Ob}{Ob}
\DeclareMathOperator{\Tr}{Tr}
\theoremstyle{plain}
\newtheorem{theorem}{Theorem}[chapter]
\newtheorem*{theorem*}{Theorem}
\newtheorem{proposition}[theorem]{Proposition}
\newtheorem*{proposition*}{Proposition}
\newtheorem{corollary}[theorem]{Corollary}
\newtheorem{lemma}[theorem]{Lemma}
\newtheorem*{lemma*}{Lemma}
\theoremstyle{definition}
\newtheorem{definition}[theorem]{Definition}
\newtheorem{construction}[theorem]{Construction}
\theoremstyle{remark}
\newtheorem{example}[theorem]{Example}
\newtheorem{remark}[theorem]{Remark}
\newcommand{\RR}{\mathbb{R}}
\newcommand{\NN}{\mathbb{N}}
\newcommand{\PP}{\mathbb{P}}
\newcommand{\partials}[2]{\left(\frac{\partial #1}{\partial #2}\right)}
\def\tn{\textnormal}
\newcommand{\op}{^{\tn{op}}}
\newcommand{\cat}[1]{\mathscr{#1}} 
\newcommand{\ncat}[1]{\mathbf{#1}} 
\newcommand{\nfun}[1]{\mathsf{#1}}  
\newcommand{\Set}{\ncat{Set}} 
\newcommand{\Meas}{\ncat{CSMeas}}
\newcommand{\inmark}{\tn{in}}
\newcommand{\outmark}{\tn{out}}
\newcommand{\dProd}[1]{\widehat{#1}}
\newcommand{\inp}[1]{#1^\inmark}
\newcommand{\outp}[1]{#1^\outmark}
\newcommand{\midp}[1]{#1^{\tn{mid}}}
\newcommand{\dyn}[1]{#1^{\tn{dyn}}}
\newcommand{\upd}[1]{#1^{\tn{upd}}}
\newcommand{\rdt}[1]{#1^{\tn{rdt}}}
\newcommand{\vinp}[1]{\dProd{\inp{#1}}}
\newcommand{\voutp}[1]{\dProd{\outp{#1}}}
\newcommand{\seq}[1]{\left\langle #1\right\rangle}
\newcommand{\Appx}{\tn{Appx}_{\epsilon}}
\newcommand{\Stst}{\tn{Stst}}
\newcommand{\StstS}{\mathcal{S}\tn{tst}}
\newcommand{\StLin}{\mathcal{S}\tn{tLin}}
\newcommand{\TFS}{\ncat{TFS}}
\newcommand{\Vect}{\ncat{Vect}}
\newcommand{\Euc}{\ncat{Euc}}
\newcommand{\FinSet}{\ncat{FinSet}}
\newcommand{\Lin}{\ncat{Lin}}
\renewcommand{\ss}{\subseteq}
\newcommand{\Strm}{\text{Strm}}
\newcommand{\OS}{\nfun{OS}}
\newcommand{\Mat}{\nfun{Mat}}
\newcommand{\MatS}{\bm{\mathcal{M}}\nfun{at}}
\newcommand{\DS}{\nfun{DS}}
\newcommand{\CS}{\nfun{CS}}
\newcommand{\LS}{\nfun{LS}}
\newcommand{\MS}{\nfun{MS}}
\newcommand{\To}[1]{\xrightarrow{#1}}
\newcommand{\too}{\longrightarrow}
\newcommand{\pr}{\tn{pr}}
\tikzset{
   oriented WD/.style={
      every to/.style={out=0,in=180,draw},
      label/.style={
         font=\everymath\expandafter{\the\everymath\scriptstyle},
         inner sep=0pt,
         node distance=2pt and -2pt},
      semithick,
      node distance=1 and 1,
      decoration={markings, mark=at position .5 with {\arrow{stealth};}},
      ar/.style={postaction={decorate}},
      execute at begin picture={\tikzset{
         x=\bbx, y=\bby,
         every fit/.style={inner xsep=\bbx, inner ysep=\bby}}}
      },
   bbx/.store in=\bbx,
   bbx = 1.5cm,
   bby/.store in=\bby,
   bby = 1.75ex,
   bb port sep/.store in=\bbportsep,
   bb port sep=2,
   bb port length/.store in=\bbportlen,
   bb port length=4pt,
   bb min width/.store in=\bbminwidth,
   bb min width=1cm,
   bb rounded corners/.store in=\bbcorners,
   bb rounded corners=2pt,
   bb small/.style={bb port sep=1, bb port length=2.5pt, bbx=.4cm, bb min width=.4cm, bby=.7ex},
   bb/.code 2 args={
      \pgfmathsetlengthmacro{\bbheight}{\bbportsep * (max(#1,#2)+1) * \bby}
      \pgfkeysalso{draw,minimum height=\bbheight,minimum width=\bbminwidth,outer sep=0pt,
         rounded corners=\bbcorners,thick,
         prefix after command={\pgfextra{\let\fixname\tikzlastnode}},
         append after command={\pgfextra{\draw
            \ifnum #1=0{} \else foreach \i in {1,...,#1} {
               ($(\fixname.north west)!{\i/(#1+1)}!(\fixname.south west)$) +(-\bbportlen,0) coordinate (\fixname_in\i) -- +(\bbportlen,0) coordinate (\fixname_in\i')}\fi
            \ifnum #2=0{} \else foreach \i in {1,...,#2} {
               ($(\fixname.north east)!{\i/(#2+1)}!(\fixname.south east)$) +(-\bbportlen,0) coordinate (\fixname_out\i') -- +(\bbportlen,0) coordinate (\fixname_out\i)}\fi;
         }}}
   },
   bb name/.style={append after command={\pgfextra{\node[anchor=north] at (\fixname.north) {#1};}}}
}
\newcommand{\smallBox}[2]{\begin{tikzpicture}[oriented WD, baseline=(X.south), bb small]\node [bb={1}{1}] (X) {};\draw[label] node[left=.1 of X_in1]  {$#1$} node[right=.1 of X_out1] {$#2$};\end{tikzpicture}}
\setheadfoot{\onelineskip}{2\onelineskip} 
\title{The steady states of coupled dynamical systems\\compose according to matrix arithmetic}
\author{
   David I. Spivak
 }
\begin{document}
\tightlists
\firmlists

\maketitle

\begin{abstract}
Open dynamical systems are mathematical models of machines that take input, change their internal state, and produce output. For example, one may model anything from neurons to robots in this way. Several open dynamical systems can be arranged in series, in parallel, and with feedback to form a new dynamical system---this is called compositionality---and the process can be repeated in a fractal-like manner to form more complex systems of systems. One issue is that as larger systems are created, their state space grows exponentially. 

In this paper a technique for calculating the steady states of an interconnected system of systems, in terms of the steady states of its component dynamical systems, is provided. These are organized into "steady state matrices" which generalize bifurcation diagrams. It is shown that the compositionality structure of dynamical systems fits with the familiar monoidal structure for the steady state matrices, where serial, parallel, and feedback composition of matrices correspond to multiplication, Kronecker product, and partial trace operations. The steady state matrices of dynamical systems respect this compositionality structure, exponentially reducing the complexity involved in studying the steady states of composite dynamical systems.
\end{abstract}

\tableofcontents*
 
\chapter{Introduction}\label{sec:introduction}

Open dynamical systems can be composed to make larger systems. For example, they can be put together in series or in parallel
\begin{align}\label{dia:series_parallel}
\begin{tikzpicture}[oriented WD, baseline=(Y.center), bbx=1em, bby=1ex]
 \node[bb={1}{1},bb name=$X_1$] (X1) {};
 \node[bb={1}{1},right =2 of X1, bb name=$X_2$] (X2) {};
 \node[bb={1}{1}, fit={($(X1.north west)+(-1,3)$) ($(X1.south)+(0,-3)$) ($(X2.east)+(1,0)$)}, bb name = $Y$] (Y) {};
 \draw[ar] (Y_in1') to (X1_in1);
 \draw[ar] (X1_out1) to (X2_in1);
 \draw[ar] (X2_out1) to (Y_out1');
\end{tikzpicture}
\qquad\qquad
\begin{tikzpicture}[oriented WD,baseline=(Y.center), bbx=1em, bby=1ex]
 \node[bb={1}{1},bb name=$X_1$] (X1) {};
 \node[bb={1}{1},below =2 of X1, bb name=$X_2$] (X2) {};
 \node[bb={2}{2}, fit={($(X1.north west)+(-1,3)$) ($(X2.south)+(0,-3)$) ($(X2.east)+(1,0)$)}, bb name = $Y$] (Y) {};
 \draw[ar] (Y_in1') to (X1_in1);
 \draw[ar] (Y_in2') to (X2_in1);
 \draw[ar] (X1_out1) to (Y_out1');
 \draw[ar] (X2_out1) to (Y_out2');
\end{tikzpicture}
\end{align}
or in a more complex combination, possibly with feedback and splitting wires
\begin{align}\label{dia:more_complex}
\begin{tikzpicture}[oriented WD,baseline, bbx=1em, bby=1ex]
 \node[bb={1}{2},bb name=$X_1$] (X1) {};
 \node[bb={2}{1},below right = -3 and 2 of X1, bb name=$X_2$] (X2) {};
 \node[bb={2}{2}, fit={($(X1.north west)+(-1,2)$) ($(X2.south)+(0,-2)$) ($(X2.east)+(1,0)$)}, bb name = $Y$] (Y) {};
 \draw[ar] (Y_in1') to (X1_in1);
 \draw[ar] (X1_out2) to (X2_in1);
 \draw[ar] (Y_in2') to (X2_in2);
 \draw[ar] (X1_out1) to (Y_out1');
 \draw[ar] (X2_out1) to (Y_out2');
\end{tikzpicture}
\qquad\qquad
\begin{tikzpicture}[oriented WD,baseline, bbx=1em, bby=1ex]
 \node[bb={2}{2},bb name=$X_1$] (X1) {};
 \node[bb={2}{1},below right = -3 and 2 of X1, bb name=$X_2$] (X2) {};
 \node[bb={2}{2}, fit={($(X1.north west)+(-1,2)$) ($(X2.south)+(0,-2)$) ($(X2.east)+(1,0)$)}, bb name = $Y$] (Y) {};
 \draw[ar] (Y_in1') to (X1_in1);
 \draw[ar] (X1_out2) to (X2_in1);
 \draw[ar] (Y_in2') to (X2_in2);
 \draw[ar] (X1_out1) to (Y_out1');
 \draw (X2_out1) to (Y_out2');
 \draw[ar] let \p1=(X2.south east), \p2=(X1.south west), \n1={\y1-\bby}, \n2=\bbportlen in
 (X2_out1) to[in=0] (\x1+\n2,\n1) -- (\x2-\n2,\n1) to[out=180] (X1_in2);
\end{tikzpicture}
\end{align}
A dynamical system has a set or space of states and a rule for how the state changes in time. An \emph{open} dynamical system also has an interface $X$ (as shown above), which indicates the number of input ports and output ports that exist for the system. Signals passed to the system through its input ports influence how the state changes. An output signal is generated as a function of the state, and it is then passed through output port to serve as an input to a neighboring system. The precise notions of dynamical systems we use in this paper (including discrete and continuous models) will be given in Section~\ref{sec:ODS_and_matrices}; for now we speak about dynamical systems in the abstract.

For any interface $X$, let $\OS(X)$ denote the set of all possible open dynamical systems of type $X$. The idea is that a diagram, such as any of those found in \eqref{dia:series_parallel} or \eqref{dia:more_complex}, determines a function 
\[\OS(X_1)\times \OS(X_2)\to \OS(Y).\]
This function amounts to a formula that produces an open system of type $Y$ given open systems of type $X_1$ and $X_2$, arranged---in terms of how signals are passed---according to the wiring diagram. The formula enforces that wires connecting interfaces correspond to variables shared by the dynamical systems.%
\footnote{Of course, one can wire together an arbitrary number of internal dynamical systems $X_1,\ldots,X_n$ to form a single interconnected system $Y$, but for the introduction, we assume $n=2$ for concreteness.} 

\section{Compositional viewpoints of dynamical systems}

We are interested in looking at open dynamical systems in ways that respect arbitrary interconnection (variable coupling) via wiring diagrams, as we now briefly explain. Above, we explained that if open systems inhabit each interior box in a wiring diagram, we can construct a composite open system for the outer box. But wiring diagrams can interconnect things besides dynamical systems.

For example, wiring diagrams make sense for interconnecting matrices too. That is, to each interface $X$, one can assign a set $\Mat(X)$ of the associated type; then, given a matrix in each interior box of a wiring diagram one can put them together to form a composite matrix for the outer box. In other words, a wiring diagram, such as any found in \eqref{dia:series_parallel} or \eqref{dia:more_complex}, should determine a function
\[\Mat(X_1)\times \Mat(X_2)\to \Mat(Y).\] 
This function is computed using matrix product, tensor (Kronecker) product, and partial trace (adding up diagonal entries in each block). One of the major goals of this paper is to prove the following.

\begin{theorem*}
There is a compositional mapping $\Stst_X\colon \OS(X)\to \Mat(X)$, given by arranging steady states into a matrix form. 
\end{theorem*}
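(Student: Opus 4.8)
The plan is to realize both $\OS$ and $\Mat$ as algebras over the symmetric monoidal category $\mathcal{W}$ of wiring diagrams (objects are interfaces, morphisms are wiring diagrams, with tensor given by juxtaposition, composition given by nesting, and a trace/diagonal encoding feedback and split wires); then ``compositional'' means precisely that $\Stst=(\Stst_X)_X$ is a morphism of $\mathcal{W}$-algebras. That $\OS$ carries such a structure is the content of Section~\ref{sec:ODS_and_matrices}: a wiring diagram $\phi\colon X_1,\dots,X_n\to Y$ acts by the substitution formula that identifies the variables shared along wires. The first genuine task is therefore to equip $\Mat$ with a $\mathcal{W}$-algebra structure, acting by (generalized) matrix multiplication on series connections, by Kronecker product on parallel arrangements, and by partial trace on feedback loops (with split wires handled by the ``copy'' matrices $\delta_{a,(a,a)}$), and to check that these assemble into a well-defined functorial action --- i.e.\ that the value on a composite wiring diagram is independent of how it is cut into elementary pieces. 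This is a coherence check internal to the rig of matrices: it reduces to bilinearity of the Kronecker product over matrix multiplication, the block/cyclic invariance of the partial trace, and the interchange law. Alternatively, one can avoid choosing a decomposition altogether by giving a single closed formula for $\phi$ acting on a tuple of matrices --- a sum, over all labelings of the internal wires that are consistent with the feedback constraints, of the product of the relevant matrix entries --- and verifying functoriality of that formula directly.

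Next I would define $\Stst_X$ by sending an open system $S$ of type $X$ to the matrix whose $(a,b)$-entry, for $a$ a value of the inputs of $X$ and $b$ a value of the outputs, is the object
\[
  \{\, s\in\mathrm{States}(S) : s \text{ is a steady state of } S \text{ under input } a \text{ and } s \text{ reads out the value } b \,\},
\]
viewed as an object of the ambient rig (a set in the bare case, an object of $\Meas$ in the enriched version of the theorem). One checks this is well defined and that $\Stst$ carries the identity wiring diagram to the identity matrix and the closed interface to the scalar $1$, so units are respected.

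It then remains to verify, for every wiring diagram $\phi\colon X_1,\dots,X_n\to Y$, the identity
\[
  \Stst_Y\!\bigl(\phi(S_1,\dots,S_n)\bigr) \;=\; \phi\bigl(\Stst_{X_1}(S_1),\dots,\Stst_{X_n}(S_n)\bigr),
\]
where the left $\phi$ is the action on $\OS$ and the right $\phi$ the action on $\Mat$. By the first paragraph it suffices to treat $\phi$ elementary. The key combinatorial identity is that a steady state of $\phi(S_1,\dots,S_n)$ is exactly a tuple $(s_1,\dots,s_n)$ in which each $s_i$ is a steady state of $S_i$ under the input that $\phi$ feeds to box $i$ --- a function of the other boxes' readouts and of the outer input --- subject to the requirement that along each internal wire the value produced at its source port agrees with the value demanded at its target port. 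Fixing the outer input $a$ and outer output $c$ and then stratifying the set of such tuples by the values carried on the internal wires exhibits it as a disjoint union, over internal-wire labelings compatible with the feedback constraints, of products of the steady-state objects of the components. A single series wire turns this into $\sum_b M_1[a,b]\cdot M_2[b,c]$, i.e.\ matrix multiplication; disjoint boxes give the product of entries, i.e.\ the Kronecker product; and a feedback wire from an output port to an input port forces those two coordinates of a labeling to coincide and sums over their common value, i.e.\ the partial trace.

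The step I expect to be the main obstacle is the feedback/partial-trace case, for two reasons. First, one must check that the steady states of the closed-up system really are the \emph{diagonal} of the behavior in the fed-back variable, not merely contained in it; this uses that the readout of a steady state is again a steady value, so the consistency equation along the feedback wire has exactly the fixed points one expects and no others. Second, in the $\Meas$-enriched setting these disjoint unions and products must again be objects of $\Meas$ and the stratification bijection above must be measure preserving, which forces one to address measurability of the equilibrium loci and to invoke a Fubini-type argument to make the sums and Kronecker products well defined; this is where the bulk of the technical work lies. Everything else --- units, the series and parallel cases, and the coherence of the $\Mat$-algebra structure --- is bookkeeping once the elementary steady-state-set identity is in place.
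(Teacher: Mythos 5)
Your proposal is correct in substance, and its mathematical core coincides with the paper's: both realize the open systems and $\Mat$ as symmetric monoidal functors on the wiring-diagram category $\cat{W}$, define $\Stst$ entrywise by the steady-state locus, and prove compositionality by stratifying the steady states of the interconnected system according to the values carried on the internal wires --- including your key observation that along a feedback wire the consistency equation picks out exactly the diagonal (because the readout of a steady state is itself steady), which is what makes the partial trace appear. The difference is organizational, and it matters for how much work the proof costs. Your primary route equips $\Mat$ with its action by declaring the values on elementary pieces (series, parallel, feedback, copy) and then checking independence of the decomposition; that coherence check is heavier than your summary suggests, since it amounts to a presentation of $\cat{W}$ (equivalently, of traced monoidal structure) by generators and relations. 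The paper takes what you offer only as an alternative: every multi-box diagram is canonically factored as ``tensor the inner boxes, then apply a one-inner-box wiring diagram $\varphi$,'' the action of $\varphi$ on matrices is the single closed formula $N_{i,j}=\sum_{k\in(\voutp{\varphi})^{-1}(j)}M_{\vinp{\varphi}(i,k),k}$ of Definition~\ref{def:Mat_wiring}, functoriality is one uniform computation via Lemma~\ref{lemma:technical} (Theorem~\ref{thm:Mat_sym_mon_func}), and the naturality of $\Stst$ is one stratification computation (Theorem~\ref{thm:stst_compositional}); matrix product, Kronecker product, and partial trace are then \emph{recovered} as special cases in Examples~\ref{ex:serial_works} and~\ref{ex:trace_works} rather than taken as primitive. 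The closed-formula route buys a short coherence argument at the price of recognizing the familiar operations only a posteriori. The technical points you flag for the measurable case are the right ones; the paper discharges them via Proposition~\ref{prop:csms} and Lemma~\ref{lemma:prep} (measurability of the equilibrium loci), the product-measure construction, and sums valued in a complete semiring (Corollary~\ref{cor:stst_compositional_MS}).
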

We say that a mapping is \emph{compositional} if it behaves correctly with respect to wiring diagrams in the following sense. Given open dynamical systems of type $X_1$ and $X_2$, we can either compose first and apply the mapping to the result, or apply the mapping first and then compose the results. We want these to give the same answer. Formally, we express this by saying that we want the following diagram to commute:
\[
\begin{tikzcd}[row sep=large]
\OS(X_1)\times\OS(X_2)\ar[r]\ar[d,"\Stst_{X_1}\times\Stst_{X_2}"']&\OS(Y)\ar[d,"\Stst_Y"]\\
\Mat(X_1)\times\Mat(X_2)\ar[r]&\Mat(Y)
\end{tikzcd}
\]

In this paper, we provide a compositional mapping from open dynamical systems of several sorts---discrete, measurable, and continuous---to the matrix domain. The entries of these matrices list, count, or measure the steady states---also known as \emph{equilibria} or \emph{fixed points}---of the dynamical system for each input and output. The topology of a dynamical system is to a large degree determined by its set of steady states and their stability properties, and these are generally organized into bifurcation diagrams (e.g., as in \cite{Strogatz}). Our classification is a generalization bifurcation diagrams (see Remark~\ref{rem:bifurcation}). The reason we refer to bifurcation diagrams as matrices, is that they compose according to matrix arithmetic. That is, when several dynamical subsystems are put together in series, parallel, or with feedback to form a larger system, the classifying matrix for the whole can be computed by multiplying, tensoring, or computing a partial trace of the subsystem matrices. 

\begin{figure}
\begin{center}
\begin{tikzpicture}[oriented WD, baseline=(X.center), bbx=1em, bby=1ex, bb port sep=1]
  \node[bb={2}{1}] (N1) {$\scriptstyle N_1$};
  \node[bb={1}{1}, below = 1 of N1] (N2) {$\scriptstyle N_2$};
  \node[bb={3}{1}, below=1 of N2] (N3) {$\scriptstyle N_3$};
  \node[bb={3}{1}, above right = -3 and 3.5 of N3] (N6) {$\scriptstyle N_6$};
  \node[bb={2}{1}, above =of N6] (N5) {$\scriptstyle N_5$};
  \node[bb={2}{1}, above= of N5] (N4) {$\scriptstyle N_4$};
  \node[bb={4}{4}, fit={($(N2.west)-(.5,0)$) ($(N4.north)+(0,2)$) ($(N5.east)+(1.5,0)$) ($(N3.south)-(0,1)$)}, bb name={$\small X$}] (X) {};
  \draw (X_in1') to (N1_in2);
  \draw (X_in2') to (N2_in1);
  \draw (X_in3') to (N3_in1);
  \draw (X_in4') to (N3_in2);
  \draw (N1_out1) to (N4_in1);
  \draw (N2_out1) to (N4_in2);
  \draw (N2_out1) to (N5_in1);
  \draw (N2_out1) to (N6_in1);
  \draw (N4_out1) to (X_out1');
  \draw (N3_out1) to (N6_in2);
  \draw (N5_out1) to (X_out2');
  \draw (N6_out1) to (X_out3');
  \draw (N6_out1) to (X_out4'); 
  \draw let \p1=(N2.south east), \p2=(N3_out1),\n2=\bbportlen in
  	(N3_out1) -- (\x1+\n2,\y2) to (N5_in2);
  \draw let \p1=(N6.south east), \p2=(N3.south west), \n1={\y2-\bby}, \n2=\bbportlen in
          (N5_out1) to[in=0, looseness=.8] (\x1+\n2,\n1) -- (\x2-\n2,\n1) to[out=180] (N3_in3);
  \draw let \p1=(N6.south east), \p2=(N6.south west), \n1={\y2-\bby}, \n2=\bbportlen in
          (N6_out1) to[in=0] (\x1+\n2,\n1) -- (\x2-\n2,\n1) to[out=180] (N6_in3);
  \draw let \p1=(N1.north east), \p2=(N1.north west), \n1={\y2+\bby}, \n2=\bbportlen in
          (N1_out1) to[in=0] (\x1+\n2,\n1) -- (\x2-\n2,\n1) to[out=180] (N1_in1);
\end{tikzpicture}
\end{center}
\caption{Networks of neurons can be modeled as dynamical systems wired together}
\label{fig:complex_WD}
\end{figure}
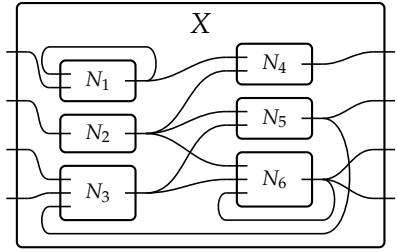

A potential interpretation of the steady state matrix in neuroscience is as follows. In perception, it is not uncommon to consider neurons as dynamical systems  \cite{Izhikevich}, and input signals can be classified as either expected or unexpected \cite{Perception}. One way to think about this is that expected input signals are those that do not change the state of the system, or at least do not change it by very much. When the state is unchanged, so is the output of the system i.e., expected perception does not cause a change in behavior. The steady state matrix presented here measures, for each (perception, behavior) pair, the set of states that are expected in that context. The purpose of the present paper is to show that this measurement is compositional, i.e., that it respects any given network structure, as in Figure~\ref{fig:complex_WD}.

One can give a similar interpretation for the steady state matrix for a discrete dynamical system. For example, consider the machine described by Alan Turing in \cite{Computing_Machinery_Intelligence}, which turns a light on and off every few minutes, unless stopped by a lever. Its state is the position of an internal wheel, its input signals are given by the lever, and its output is electrical current running the lightbulb. The steady state matrix tells us something important about the system: that when the lever is "off", every state is fixed, but when the lever is in "on", the states are constantly changing.

\section{Plan of paper}

While this paper has category theory as its underlying framework, the audience is intended to include scientists and engineers with little or no background in category theory. Hence, much of the paper is spent without reference to category theory, so the background (given in Section~\ref{sec:notation_background}) can be made fairly modest. The content of the paper begins in Section~\ref{sec:ODS_and_matrices}, where we discuss five different interpretations of the box and wiring diagram syntax: discrete, measurable, linear, and continuous dynamical systems, as well as matrices. In Section~\ref{sec:introducing_classification}, we preview the classification function that extracts a matrix of steady states from a dynamical system, and show that it is compositional with respect to serial wiring diagrams. We prove that it is in fact compositional with respect to all wiring diagrams in Section~\ref{sec:compositional_mappings}. In that secion, we also show that Euler's method (of discretizing a continuous dynamical system) is compositional, as is calculating the linear stability of steady states. 

In order to get to this point, we need to formally define wiring diagrams and their composition (Section~\ref{sec:CT_formulation}); the category-theoretic idea is to use symmetric monoidal categories $\cat{W}$. In Section~\ref{sec:formal_specs}, we show that our "five interpretations" are lax monoidal functors $\cat{W}\to\Set$. Bringing it down to earth, each of our five interpretations (discrete, measurable, and continuous dynamical systems, and matrices) is compositional with respect to wiring diagrams. Finally in Section~\ref{sec:extended_example} we give an extended example.

\section{Notation and Background}\label{sec:notation_background}

When we write $X\in\Set$, we mean that $X$ is a set. If $X$ and $Y$ are sets, we denote their cartesian product by $X\times Y$; it is the set of pairs $\{(x,y)\mid x\in X, y\in Y\}$. We denote their coproduct---i.e., their disjoint union---by $X+Y$. 

Given a set $S$, we define its \emph{count}, denoted $\#S\in\NN_+$, to be the cardinality of $S$, if it is finite, and $\infty$ if it is infinite. Note that counts add and multiply correctly: $\#(X+Y)=(\#X)+(\#Y)$ and $\#(X\times Y)=(\#X)\cdot(\#Y).$ By $\PP(S)$ we mean the \emph{power set of $S$}, i.e., the set of sets $\PP(S)=\{U\ss S\}$.

We will briefly discuss measurable spaces in Section~\ref{sec:MDS}, manifolds in Section~\ref{sec:CDS}, and complete semirings in Section~\ref{sec:Mat}, but we will not need any advanced theory and will give all the necessary background and references at that time. In Sections~\ref{sec:CT_formulation}~and~\ref{sec:formal_specs} we will use a small but significant amount of category theory (Section~\ref{sec:more_background}). However, the extended example (Section~\ref{sec:extended_example}) requires almost no background.

\section*{Acknowledgements}

Thanks go to Gaurav Venkataraman and to Rosalie B\'{e}langer-Rioux for interesting and helpful conversations. This work was supported by AFOSR grant FA9550--14--1--0031, ONR grant N000141310260, and NASA grant NNH13ZEA001N.

\chapter{Open dynamical systems and matrices}\label{sec:ODS_and_matrices}

For a wiring diagram such as the one shown here
\begin{equation}\label{eqn:my_WD}
\begin{tikzpicture}[oriented WD, baseline=(Y.center), bbx=1em, bby=1ex]
 \node[bb={2}{2},bb name=$X_1$] (X1) {};
 \node[bb={2}{1},below right = -3 and 2 of X1, bb name=$X_2$] (X2) {};
 \node[bb={2}{2}, fit={($(X1.north west)+(-1,2)$) ($(X2.south)+(0,-2)$) ($(X2.east)+(1,0)$)}, bb name = $Y$] (Y) {};
 \draw[ar] (Y_in1') to (X1_in1);
 \draw[ar] (X1_out2) to (X2_in1);
 \draw[ar] (Y_in2') to (X2_in2);
 \draw[ar] (X1_out1) to (Y_out1');
 \draw (X2_out1) to (Y_out2');
 \draw[ar] let \p1=(X2.south east), \p2=(X1.south west), \n1={\y1-\bby}, \n2=\bbportlen in
 (X2_out1) to[in=0] (\x1+\n2,\n1) -- (\x2-\n2,\n1) to[out=180] (X1_in2);
\end{tikzpicture}
\end{equation}
there are many interpretations for what can inhabit each box. The implicit rule, however, is as follows: given inhabitants of the interior boxes $X_1$ and $X_2$, the wiring diagram yields a "combined", or "composite", or "interconnected" inhabitant of the outer box $Y$. In this section, we discuss five such interpretations of boxes and their interconnections. 

In Section~\ref{sec:DDS} the inhabitants of each box are discrete dynamical systems, and we provide formulas for how they can be combined in serial, parallel, splitting, and feedback diagrams to form new dynamical systems. In Section~\ref{sec:MDS} we briefly cover how this idea extends to measurable spaces, so that the dynamical systems inhabiting each box change their state and produce output in a more structured (namely, measurable) way. In Section~\ref{sec:LDS}, we discuss a linear variant, which will eventually be used for classifying the stability of continuous dynamical systems introduced in Section~\ref{sec:CDS}. Continuous dynamical systems are based on ordinary differential equations, so that states can evolve continuously. For each of the interpretations, we will eventually give composition formulas for every possible wiring diagram and show (in Section~\ref{sec:formal_specs}) that these formulas are self-consistent, i.e., that they support nesting systems of systems.

In Section~\ref{sec:Mat}, we discuss matrices, in the same context. While probably more familiar, readers may not be aware that matrices serve as another compositional interpretation of the boxes in wiring diagrams such as \eqref{eqn:my_WD}. For example, we can associate matrix multiplication to serial composition, matrix tensor (i.e., Kronecker) product to parallel composition, etc. The check that these formulas are consistent under nesting is again relegated to Section~\ref{sec:formal_specs}. Finally, in Section~\ref{sec:introducing_classification} we briefly look at the steady-state classification, which compositionally produces a matrix of steady states for dynamical systems of any sort.

\section{Discrete dynamical systems}\label{sec:DDS}

\begin{definition}\label{def:DDS}

Let $A,B\in\Set$ be sets. We define an \emph{$(A,B)$-open discrete dynamical system}, or \emph{$(A,B)$-discrete system} for short, to be a 3-tuple $(S,\rdt{f},\upd{f})$, where 
\begin{compactitem}
\item $S\in\Set$ is a set, called the \emph{state set}, 
\item $\rdt{f}\colon S\to B$ is a function, called the \emph{readout function}, and 
\item $\upd{f}\colon A\times S\to S$ is a function, called the \emph{update function}.
\end{compactitem}
We call $A$ the \emph{input set} and $B$ the \emph{output set} in this case. An \emph{initialized $(A,B)$-discrete system} is a four-tuple $(S,s_0,\rdt{f},\upd{f})$, where $(S,\rdt{f},\upd{f})$ is a discrete system and $s_0\in S$ is a chosen element, called the \emph{initial state}.

Let $\DS(A,B)$ denote the set of all $(A,B)$-discrete systems, and let $\DS_*(A,B)$ denote the set of all initialized $(A,B)$-discrete systems.

\end{definition}

\begin{remark}

The box $\smallBox{A}{B}$ will have many different interpretations in this paper, including discrete, measurable, linear, and continuous dynamical systems, as well as matrices (see also Sections~\ref{sec:MDS},~\ref{sec:CDS},~and~\ref{sec:Mat}). For discrete systems, we will think of this box as being inhabited by any $(A,B)$-discrete system, where $A$ is the input set and $B$ is the output set. Similarly, the box $\begin{tikzpicture}[oriented WD, bb small]\node [bb={2}{1}] (X) {};\draw[label] node[above left=-.1 and .1 of X_in1]  {$\scriptscriptstyle A$} node[below left =-.1 and .1 of X_in2] {$\scriptscriptstyle B$} node[right=.1 of X_out1] {$\scriptscriptstyle C$};\end{tikzpicture}$ can be inhabited by any $(A\times B,C)$-discrete system.

\end{remark}

\begin{remark}

An initialized $(A,B)$-discrete system is the same thing as a Moore machine \cite{MooreMachine} with input alphabet $A$ and output alphabet $B$. It is also what Alan Turing called a \emph{discrete state machine} \cite{Computing_Machinery_Intelligence}.

\end{remark}

\begin{definition}\label{def:steady_state_DDS}

Let $A,B\in\Set$ be sets, and let $F=(S,\rdt{f},\upd{f})$ be an $(A,B)$-discrete system. For $a\in A$ and $b\in B$, define an \emph{$(a,b)$-steady state} to be a state $s\in S$ such that $\upd{f}(a,s)=s$ and $\rdt{f}(s)=b$. We denote the set of all $(a,b)$-steady states by
\[\StstS(F)_{a,b}\coloneqq\left\{s\in S\;\;\middle|\;\;\rdt{f}(s)=b,\quad\upd{f}(a,s)=s\right\}\]
and its count by $\Stst(F)_{a,b}\coloneqq\#\StstS(F)_{a,b}$.

\end{definition}

\begin{example}\label{ex:discrete_system}

Let $A=\{\tn{T, F}\}$ and $B=\{\tn{Red, Green, Blue}\}$. Below is a small example of an $(A,B)$-discrete system (i.e., a possible inhabitant of the box $\smallBox{A}{B}$), shown both in tabular form and as a transition diagram. 
\begin{equation}\label{eqn:my_state_machine}
\parbox{2.1in}{\footnotesize
\begin{tabular}{c | c || c | c}
\textbf{Input}&\textbf{State}&\textbf{Readout}&\textbf{Next state}\\\hline
T&1&Blue&2\\
F&1&Blue&1\\
T&2&Red&2\\
F&2&Red&3\\
T&3&Green&4\\
F&3&Green&4\\
T&4&Blue&1\\
F&4&Blue&4
\end{tabular}
}
\hspace{.3in}
=
\hspace{-.4in}
\parbox{3.5in}{
\begin{tikzpicture}[
	box/.style={
		rectangle,
		minimum size=6mm,
		very thick,
		draw=black, 
		top color=white, 
		bottom color=white!50!black!20, 
		align=center,
		font=\normalfont
	}]
	\node [box] at (0,0) (st1) {State: 1\\\footnotesize Readout: Blue};
	\node [box, right = 1.5 of st1] (st2) {State: 2\\\footnotesize Readout: Red};
	\node [box, below = 1.5 of st2] (st3) {State: 3\\\footnotesize Readout: Green};
	\node [box, below = 1.5 of st1] (st4) {State: 4\\\footnotesize Readout: Blue};
	\draw[->,thick, bend left] (st1) edge["T"] (st2);
	\draw[->,thick, bend left] (st2) edge["F"] (st3);
	\draw[->,thick, bend right] (st3) edge["F"'] (st4);
	\draw[->,thick, bend left] (st3) edge ["T"] (st4);
	\draw[->,thick, bend left] (st4) edge ["T"] (st1);
	\draw[->,thick] (st1) edge [out=180, in=90,looseness=4,"F"] (st1);
	\draw[->,thick] (st2) edge [out=90, in=0,looseness=4,"T"] (st2);
	\draw[->,thick] (st4) edge [out=270, in=180,looseness=4,"F"] (st4);
	\end{tikzpicture}
}
\end{equation}
The state set is $S=\{\tn{1, 2, 3, 4}\}$. The "Readout" column depends only on the state (hence the pairs of repeated values); it represents the function $\rdt{f}\colon S\to B$. The "Next state" column depends on the input and the state; it represents the update function $\upd{f}\colon A\times S\to S$.

\end{example}

\begin{example}\label{ex:serial}

Let $(S_1,\rdt{f_1},\upd{f_1})$ and $(S_2,\rdt{f_2},\upd{f_2})$ be discrete systems on $X_1$ and $X_2$ respectively. Here we define their serial composition $(T,\rdt{g},\upd{g})$ on $Y$, shown diagrammatically below:
\[
\begin{tikzpicture}[oriented WD, bbx=1em, bby=1ex]
 \node[bb={1}{1},bb name=$X_1$] (X1) {};
 \node[bb={1}{1},right =2 of X1, bb name=$X_2$] (X2) {};
 \node[bb={1}{1}, fit={($(X1.north west)+(-1,3)$) ($(X1.south)+(0,-3)$) ($(X2.east)+(1,0)$)}, bb name = $Y$] (Y) {};
 \draw[ar] (Y_in1') to (X1_in1);
 \draw[ar] (X1_out1) to (X2_in1);
 \draw[ar] (X2_out1) to (Y_out1');
 \draw[label] 
	node at ($(Y_in1')!.5!(X1_in1)+(0,7pt)$)  {$A$}
	node at ($(X1_out1)!.5!(X2_in1)+(0,7pt)$)   {$B$}
	node at ($(X2_out1)!.5!(Y_out1')+(0,7pt)$)  {$C$};
\end{tikzpicture}
\]
To begin, suppose that the following four functions have been defined:
\begin{equation}\label{eqn:some_DDSs}
\begin{array}{lll}
	\rdt{f_1}\colon S_1\to B &&\upd{f_1}\colon A\times S_1\to S_1\\
	\rdt{f_2}\colon S_2\to C&&\upd{f_2}\colon B\times S_2\to S_2 
\end{array}
\end{equation}
For the composite system (in $Y$), define $T\coloneqq S_1\times S_2$, so that a state of the composite system is a pair $(s_1,s_2)$, where $s_1\in S_1$ and $s_2\in S_2$. Define the required functions for the composite system as follows:
\[
\begin{array}{lll}
	\rdt{g}\colon S_1\times S_2\to C&&\upd{g}\colon A\times S_1\times S_2\to S_1\times S_2\\
	\rdt{g}(s_1,s_2)\coloneqq\rdt{f_2}(s_2)&&\upd{g}(a,s_1,s_2)\coloneqq \Big(\upd{f_1}(a,s_1),\upd{f_2}\big(\rdt{f_1}(s_1),s_2\big)\Big)
\end{array}
\]

\end{example}

\begin{example}\label{ex:parallel}

Let $(S_1,\rdt{f_1},\upd{f_1})$ and $(S_2,\rdt{f_2},\upd{f_2})$ be discrete systems on $X_1$ and $X_2$ respectively. Here we define their parallel composition $(T,\rdt{g},\upd{g})$ on $Y$, shown diagrammatically below:
\[
\begin{tikzpicture}[oriented WD, bbx=1em, bby=1ex]
 \node[bb={1}{1},bb name=$X_1$] (X1) {};
 \node[bb={1}{1},below =2 of X1, bb name=$X_2$] (X2) {};
 \node[bb={2}{2}, fit={($(X1.north west)+(-1,3)$) ($(X2.south)+(0,-3)$) ($(X2.east)+(1,0)$)}, bb name = $Y$] (Y) {};
 \draw[ar] (Y_in1') to (X1_in1);
 \draw[ar] (Y_in2') to (X2_in1);
 \draw[ar] (X1_out1) to (Y_out1');
 \draw[ar] (X2_out1) to (Y_out2');
 \draw[label] 
	node at ($(Y_in1')!.5!(X1_in1)+(0,7pt)$)  {$A_1$}
	node at ($(Y_in2')!.5!(X2_in1)+(0,7pt)$)   {$A_2$}
	node at ($(X1_out1)!.5!(Y_out1')+(0,7pt)$)  {$B_1$}
	node at ($(X2_out1)!.5!(Y_out2')+(0,7pt)$) {$B_2$};
\end{tikzpicture}
\]
Suppose that the discrete systems on $X_1$ and $X_2$ have been defined, analogously to as in \eqref{eqn:some_DDSs}. For the composite system (in $Y$), define $T\coloneqq S_1\times S_2$, and define the required functions as follows:
\[
\begin{array}{lll}
	\rdt{g}\colon S_1\times S_2\to B_1\times B_2&&\upd{g}\colon A_1\times A_2\times S_1\times S_2\to S_1\times S_2\\
	\rdt{g}(s_1,s_2)\coloneqq\big(\rdt{f_1}(s_1),\rdt{f_2}(s_2)\big)&&\upd{g}(a_1,a_2,s_1,s_2)\coloneqq \big(\upd{f_1}(a_1,s_1),\upd{f_2}(a_2,s_2)\big)
\end{array}
\]

\end{example}

\begin{example}\label{ex:split}

Let $(S,\rdt{f},\upd{f})$ be a discrete system on $X$. Here we show what happens when wires split, in one of two ways, to form a discrete system $(T,\rdt{g},\upd{g})$ on $Y$, as shown diagrammatically below:
\[
\begin{tikzpicture}[oriented WD, bbx=1em, bby=1ex, bb port sep=3 pt]
 	\node[bb={1}{1},bb name=$X$] (X) {};
	\node[bb={1}{2}, fit={(X) ($(X.north east)+(1.2,3)$) ($(X.south west)+(-1.2,-3)$)}, bb name = $Y$] (Y) {};
	\draw[ar] (Y_in1') to (X_in1);
	\draw[ar] (X_out1) to (Y_out1');
	\draw[ar] (X_out1) to (Y_out2');
	\draw[label] 
		node at ($(Y_in1')!.5!(X_in1)+(0,7pt)$)  {$A$}
		node at ($(X_out1)!.5!(Y_out1')+(0,7pt)$)   {$B$}
		node at ($(X_out1)!.5!(Y_out2')-(0,7pt)$)   {$B$};
\end{tikzpicture}
\qquad\qquad
\begin{tikzpicture}[oriented WD, bbx=1em, bby=1ex, bb port sep=2 pt]
 	\node[bb={2}{1},bb name=$X$] (X) {};
	\node[bb={1}{1}, fit={(X) ($(X.north east)+(1.2,3)$) ($(X.south west)+(-1.2,-3)$)}, bb name = $Y$] (Y) {};
	\draw[ar] (Y_in1') to (X_in1);
	\draw[ar] (Y_in1') to (X_in2);
	\draw[ar] (X_out1) to (Y_out1');
	\draw[label] 
		node at ($(Y_in1')!.5!(X_in1)+(0,7pt)$)  {$A$}
		node at ($(Y_in1')!.5!(X_in2)-(0,7pt)$)   {$A$}
		node at ($(X_out1)!.5!(Y_out1')+(0,7pt)$)   {$B$};
\end{tikzpicture}
\]
Suppose that the discrete system for $X$ has been defined, analogous to that of $f_1$ in \eqref{eqn:some_DDSs}. In each case, define $T\coloneqq S$ for the composite system (in $Y$). In the left-hand (split-after $X$) case, define the required functions as follows:
\[
\begin{array}{lll}
	\rdt{g}\colon S\to B\times B&&\upd{g}\colon A\times S\to S\\
	\rdt{g}(s)\coloneqq\big(\rdt{f}(s),\rdt{f}(s)\big)&&\upd{g}(a,s)\coloneqq \upd{f}(a,s)
\end{array}
\]
In the right-hand (split-before $X$) case, define the required functions as follows:
\[
\begin{array}{lll}
	\rdt{g}\colon S\to B&&\upd{g}\colon A\times S\to S \\
	\rdt{g}(s)\coloneqq\rdt{f}(s)&&\upd{g}(a,s)\coloneqq \upd{f}(a,a,s)
\end{array}
\]

\end{example}

\begin{example}\label{ex:feedback}

Let $(S,\rdt{f},\upd{f})$ be a discrete system on $X$. Here we show what happens when there is feedback, to form a discrete system $(T,\rdt{g},\upd{g})$ on $Y$, as shown diagrammatically below:
\[
\begin{tikzpicture}[oriented WD, bbx=1em, bby=1ex]
	\node[bb={2}{2}, bb name=$X$] (dom) {};
	\node[bb={1}{1}, fit={(dom) ($(dom.north east)+(1,4)$) ($(dom.south west)-(1,2)$)}, bb name = $Y$] (cod) {};
	\draw[ar,pos=20] (cod_in1') to (dom_in2);
	\draw[ar,pos=2] (dom_out2) to (cod_out1');
	\draw[ar] let \p1=(dom.north east), \p2=(dom.north west), \n1={\y2+\bby}, \n2=\bbportlen in (dom_out1) to[in=0] (\x1+\n2,\n1) -- (\x2-\n2,\n1) to[out=180] (dom_in1);
	\draw[label] 
		node[below left=2pt and 3pt of dom_in2]{$A$}
		node[below right=2pt and 3pt of dom_out2]{$B$}
		node[above left=4pt and 6pt of dom_in1] {$C$}
		node[above right=4pt and 6pt of dom_out1] {$C$};
\end{tikzpicture}
\]

To begin, suppose that the following functions have been defined:
\begin{equation*}
\begin{array}{lll}
	\rdt{f}\colon S\to B\times C&&\upd{f}\colon A\times C\times S\to S 
\end{array}
\end{equation*}
We will need to refer to the coordinate projections $\rdt{f}_B\colon S\to B$ and $\rdt{f}_C\colon S\to C$ of $\rdt{f}$, i.e., where $\rdt{f}=(\rdt{f}_B,\rdt{f}_C)$. Then for the composite system (in $Y$), define $T\coloneqq S$, and define the required functions as follows:
\[
\begin{array}{lll}
	\rdt{g}\colon S\to B&&\upd{g}\colon A\times S\to S\\
	\rdt{g}(s)\coloneqq \rdt{f}_B(s)&&\upd{g}(a,s)\coloneqq \upd{f}\big(a,\rdt{f}_C(s),s\big)
\end{array}
\]

\end{example}

\subsection{Discrete systems act as stream processors}

It is easy to see that initialized discrete systems can transform streams of input into streams of output. We briefly explain how this works. Suppose we have an initialized $(A,B)$-discrete system $(S,s_0,\rdt{f},\upd{f})$ inhabiting $\smallBox{A}{B}$. Given an input stream $(a_0,a_1,a_2\ldots)$ we can produce an state stream $(s_0,s_1,s_2,\ldots)$, where
\[s_{i+1}=\upd{f}(a_i,s_i)\]
and hence an output stream $(b_0,b_1,b_2,\ldots)$, where $b_i=\rdt{f}(s_i)$. 

\begin{example}\label{ex:stream_processor}

Consider the discrete system $(S,\rdt{f},\upd{f})$ given in Example~\ref{ex:discrete_system}, and say that the initial state is State 1. Using the formula above, this initialized $(A,B)$-discrete system can process any stream in $A=\{\tn{T, F}\}$ and produce an output stream in $B=\{\tn{Red, Blue, Green}\}$. 

For example, let $\sigma=\tn{[T, T, F, T, F]}\in\Strm(A)$ be an input stream. From it, the initialized discrete system of \eqref{eqn:my_state_machine} produces the state stream
\[(\tn{State 1, State 2, State 2, State 3, State 4, State 4})\]
and outputs the $B$-stream
\[(\tn{Blue, Red, Red, Green, Blue, Blue}).\]

\end{example}

\section{Measurable dynamical systems}\label{sec:MDS}

A slight modification of Definition~\ref{def:DDS} is useful, so that we are able to measure steady states more generally than merely by counting them. To do this, we need just a bit of measure theory. We loosely follow \cite{Bogachev_Measure_Theory}. Readers with less advanced mathematical background are invited to skim or skip to Section~\ref{sec:Mat}.

\begin{definition}\label{def:measurable_spaces}

Let $X$ be a set, and $\PP(X)$ its set of subsets. A \emph{$\sigma$-algebra on $X$} is a subset $\Sigma\ss\PP(X)$ that contains the empty set, is closed under taking complements, and is closed under taking countable unions. We say that a subset $S\ss X$ is \emph{measurable} if $S\in \Sigma$. A \emph{measurable space} is a pair $(X,\Sigma)$, where $X$ is a set and $\Sigma$ is a $\sigma$-algebra on $X$. A \emph{measurable function from $(X,\Sigma_X)$ to $(Y,\Sigma_Y)$} is a function $f\colon X\to Y$ such that if $V\in\Sigma_Y$ is measurable then its preimage $f^{-1}(V)\in\Sigma_X$ is also measurable. 

Let $(X,\Sigma)$ be a measurable space, and recall that $\RR_+=\RR_{\geq 0}\cup\{\infty\}$. A function $\mu\colon X\to\RR_+$ is called a \emph{measure} if the following holds for any empty, finite, or countably infinite set $I$: if we have a set $E_i\ss X$ for each $i\in I$, and if these sets are pairwise disjoint (i.e., if $i\neq j$ then $E_i\cap E_j=\emptyset$), then we have $\mu\left(\bigcup_{i\in I}E_i\right)=\sum_{i\in I}\mu(E_i)$; in particular, $\mu(\emptyset)=0$.

A measurable space is called \emph{countably-separated} if there is a countable subset $\mathcal{A}\ss\Sigma$ such that: if $x\neq y\in X$ are distinct points then there exists $A\in\mathcal{A}$ such that $x\in A$ and $y\not\in A$. We define the \emph{category of countably-separated measurable spaces}, which we denote $\Meas$, to have countably-separated measurable spaces as objects and measurable functions as morphisms. If $S$ is equipped with a measure $\mu\colon\Sigma\to\RR_+$, we call it a \emph{countably-separated measure space}.

\end{definition}

What makes $\Meas$ a good category for us is that it is closed under finite products and that one can measure fixed point (steady state) sets.

\begin{proposition}\label{prop:csms}

The category $\Meas$ of countably-separated measurable spaces has the following properties:
\begin{compactenum}
	\item If $T$ is a second-countable Hausdorff topological space (e.g., a manifold) then its Borel measurable space is countably-separated, $(T,\Sigma_T)\in\Meas$.
	\item The category $\Meas$ is closed under taking finite (in fact, countable) products.
	\item For any object $X\in\Meas$ and element $x\in X$, the singleton $\{x\}$ is measurable.
	\item For any object $X\in\Meas$, the diagonal $X\ss X\times X$ is measurable.
	\item If $f\colon X\to X$ is a morphism in $\Meas$ then the fixed point set $\{x\in X\mid f(x)=x\}\ss X$ is measurable.
\end{compactenum}
\end{proposition}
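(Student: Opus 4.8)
The plan is to verify the five properties in order, using only the definitions of countably-separated measurable space, $\sigma$-algebra, measure, and measurable function, together with standard separation facts about second-countable Hausdorff spaces.

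For (1), I would argue that if $T$ is second-countable Hausdorff with countable basis $\{U_n\}$, then the Borel $\sigma$-algebra $\Sigma_T$ generated by the open sets contains each $U_n$, and the collection $\mathcal{A}=\{U_n\}$ is countably-separating: given $x\neq y$, Hausdorffness gives disjoint opens separating them, and shrinking to basis elements produces some $U_n$ containing $x$ but not $y$. For (2), given countably-separated spaces $(X,\Sigma_X)$ and $(Y,\Sigma_Y)$ with separating families $\mathcal{A}_X$, $\mathcal{A}_Y$, the product $\sigma$-algebra $\Sigma_{X\times Y}$ is generated by the rectangles $A\times Y$ and $X\times B$; then the countable family $\{A\times Y\mid A\in\mathcal{A}_X\}\cup\{X\times B\mid B\in\mathcal{A}_Y\}$ separates points of $X\times Y$, since if $(x,y)\neq(x',y')$ then $x\neq x'$ or $y\neq y'$, and the corresponding separating set in $\mathcal{A}_X$ or $\mathcal{A}_Y$ lifts to a rectangle doing the job. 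Induction (or the analogous argument with a countable family of factors) handles finite and countable products.

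For (3), fix $x\in X$ and let $\mathcal{A}$ be a countable separating family. For each $y\neq x$ choose $A_y\in\mathcal{A}$ with $x\in A_y$, $y\notin A_y$ (or its complement, normalizing so that $x$ always lands inside). The intersection $\bigcap_y A_y$ equals $\{x\}$, but this is an uncountable intersection in general; the fix is to note that since $\mathcal{A}$ is countable, only countably many distinct sets arise among the $A_y$, so $\{x\}=\bigcap\{A\in\mathcal{A}\mid x\in A\}$ is a countable intersection of measurable sets, hence measurable. Property (4) follows from (2) and (3) together with a diagonal argument: the complement of the diagonal is $\bigcup_{A\in\mathcal{A}}\big((A\times(X\setminus A))\cup((X\setminus A)\times A)\big)$, a countable union of measurable rectangles, because $(x,y)$ lies off the diagonal iff some $A\in\mathcal{A}$ separates $x$ from $y$; hence the diagonal is measurable. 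Finally (5): if $f\colon X\to X$ is measurable, then $(\id,f)\colon X\to X\times X$ is measurable since each component is, and the fixed point set is the preimage $(\id,f)^{-1}(\Delta_X)$ of the diagonal, which is measurable by (4).

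The main obstacle is the countability bookkeeping in (3) and (4): one must resist writing an uncountable intersection over all points $y\neq x$ and instead exploit that the separating family $\mathcal{A}$ is itself countable, so that $\{x\}$ (respectively the off-diagonal set) is a genuinely countable Boolean combination of members of $\mathcal{A}$. Once that observation is in hand, everything else is routine manipulation of $\sigma$-algebras, and (5) is immediate from (4) plus closure under products.
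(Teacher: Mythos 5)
Your proof is correct, and for items (1) and (5) it follows the same route as the paper: a countable base witnesses countable separation for a second-countable Hausdorff space, and the fixed point set of $f$ is the preimage of the diagonal under the measurable map $x\mapsto(x,f(x))$. Where you genuinely diverge is in items (2)--(4): the paper simply cites the literature (Fremlin for closure under countable products, Bogachev's Theorem~6.5.7 for measurability of singletons and of the diagonal), whereas you prove these directly from the definition. Your arguments there are sound, and the key observation you isolate is exactly the right one: because the separating family $\mathcal{A}$ is countable, the identities
\[
\{x\}=\bigcap\{A\in\mathcal{A}\mid x\in A\}
\qquad\text{and}\qquad
(X\times X)\setminus\Delta_X=\bigcup_{A\in\mathcal{A}}\bigl(A\times(X\setminus A)\bigr)\cup\bigl((X\setminus A)\times A\bigr)
\]
are \emph{countable} Boolean combinations of measurable sets, so no uncountable intersection ever needs to be taken; the separation axiom guarantees both identities hold as stated. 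Likewise your product argument, pulling back the separating families along the projections, is the standard proof of the cited Fremlin result. What your version buys is a self-contained, elementary proof requiring no outside measure theory; what the paper's version buys is brevity and an implicit pointer to the more general theory of countably separated (equivalently, separating countably generated) spaces in which these facts live. Either is acceptable; if anything, your write-up of (3) could be streamlined by stating the countable-intersection identity directly rather than first setting up and then repairing the uncountable choice of $A_y$.
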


\begin{proof}
We go through each in turn.
\begin{compactenum}
	\item Let $\mathcal{A'}$ be the countable base of open sets in $X$, and let $\mathcal{A}=\mathcal{A'}\cup\{(X-U)\mid U\in\mathcal{A}'\}$ be its union with the complementary (closed) subsets of $X$. Then $\mathcal{A}$ separates points in $X$. 
	\item\label{enum:products} See \cite[343H.(v)]{Fremlin_Measure_Theory}.
	\item See \cite[Theorem 6.5.7]{Bogachev_Measure_Theory}.
	\item\label{enum:diagonal} See \cite[Theorem 6.5.7]{Bogachev_Measure_Theory}.
	\item The graph $\Gamma(f)\colon X\to X\times X$ of $f$, sending $x$ to $(x,f(x))$, is measurable by \eqref{enum:products}, and the fixed point set is the preimage of the diagonal, which is measurable by \eqref{enum:diagonal}.
\end{compactenum}\end{proof}

\begin{definition}\label{def:MDS}

Let $A,B\in\Meas$ be countably-separated measurable spaces. Define an \emph{$(A,B)$-open measurable dynamical system} or \emph{$(A,B)$-measurable system} for short, to be a four-tuple $(S,\mu,\rdt{f},\upd{f})$, where 
\begin{compactitem}
\item $(S,\mu)$ is a countably-separated measure space, called the \emph{state space}, 
\item $\rdt{f}\colon S\to B$ is a measurable function, called the \emph{readout function}, and 
\item $\upd{f}\colon A\times S\to S$ is a measurable function, called the \emph{update function}.
\end{compactitem}
Let $\MS(A,B)$ denote the set of $(A,B)$-measurable systems. 

\end{definition}

Every measurable system has an \emph{underlying discrete system} (see Definition~\ref{def:DDS}). Thus we can define initialized measurable systems by including an initial state $s_0\in S$ in the data above, and these can serve as stream processors as in Example~\ref{ex:stream_processor}. The steady states of a measurable system are the same as those of its underlying discrete system (Definition~\ref{def:steady_state_DDS}).

\begin{remark}

We can recover Definition~\ref{def:DDS} from Definition~\ref{def:MDS} when $A$, $B$, and $S$ are countable sets. In this case, considering them as discrete topological spaces, which are always Hausdorff, $A$, $B$, and $S$ will also be second countable. Then $S$ can be given the counting measure, and every function between discrete measurable spaces is measurable.

\end{remark}

Serial, parallel, splitting, and feedback composition for measurable systems works exactly as they do for discrete systems (see Section~\ref{sec:DDS}), as will be shown formally in Section~\ref{sec:formal_specs}. 

\section{Linear dynamical systems}\label{sec:LDS}

By $\Lin$ we mean the category of finite dimensional real vector spaces $\RR^n$, where $n\in\NN$, and linear maps between them (in fact any field in place of $\RR$ will do just as well). The following definition has been adapted from \cite[\S 5]{VSL}.

\begin{definition}\label{def:LDS}
Let $A,B\in\Lin$ be finite dimensional real vector spaces. We define an \emph{$(A,B)$-open linear dynamical system}, or \emph{$(A,B)$-linear system} for short, to be a 4-tuple $(S,\inp{M},\midp{M},\outp{M})$, where
\begin{compactitem}
\item $S\in\Lin$ is a finite dimensional real vector space, called the \emph{state space},
\item $\inp{M}\colon A\to S$,\; $\midp{M}\colon S\to S$,\; and\; $\outp{M}\colon S\to B$ are linear transformations, called the \emph{dynamic components}.
\end{compactitem}
Let $\LS(A,B)$ denote the set of all $(A,B)$-linear systems.

It is sometimes useful to choose a basis for each vector space and consider the linear maps as matrices. If $A\cong\RR^k$, $B\cong\RR^\ell$, and $S\cong\RR^n$, the three matrices can be arranged as blocks in a $(n+\ell)\times(n+k)$-matrix of the form:
\[
\left(
\begin{array}{l | l}
	\midp{M}&\inp{M}\\\hline
	\outp{M}&0
\end{array}
\right)
\]
\end{definition}

\begin{example}\label{ex:serial_linear_comp}

Let $(S_1,\inp{M_1},\midp{M_1},\outp{M_1})$ and $(S_2,\inp{M_2},\midp{M_2},\outp{M_2})$ be linear systems on $X_1$ and $X_2$, respectively. Here we define their serial composition $$(T,\inp{N},\midp{N},\outp{N})$$ on $Y$, shown diagrammatically below:
\[
\begin{tikzpicture}[oriented WD, bbx=1em, bby=1ex]
 \node[bb={1}{1},bb name=$X_1$] (X1) {};
 \node[bb={1}{1},right =2 of X1, bb name=$X_2$] (X2) {};
 \node[bb={1}{1}, fit={($(X1.north west)+(-1,3)$) ($(X1.south)+(0,-3)$) ($(X2.east)+(1,0)$)}, bb name = $Y$] (Y) {};
 \draw[ar] (Y_in1') to (X1_in1);
 \draw[ar] (X1_out1) to (X2_in1);
 \draw[ar] (X2_out1) to (Y_out1');
 \draw[label] 
	node at ($(Y_in1')!.5!(X1_in1)+(0,7pt)$)  {$A$}
	node at ($(X1_out1)!.5!(X2_in1)+(0,7pt)$)   {$B$}
	node at ($(X2_out1)!.5!(Y_out1')+(0,7pt)$)  {$C$};
\end{tikzpicture}
\]
The resulting state space will be the direct sum $T=S_1\oplus S_2$. Choosing bases for all vector spaces involved, as in Definition~\ref{def:LDS}, the dynamic components will be as in the following block $(n_1+n_2+\ell_2)\times(n_1+n_2+k_1)$-matrix
\[
\left(
\begin{array}{ll | l}
	\midp{M_1}&0					&	\inp{M_1}\\
	\inp{M_2}\outp{M_1}&\midp{M_2}	&	0\\\hline
	0&\outp{M_2}					&	0
\end{array}
\right)
\]
\end{example}

Though not entirely straightforward, there is a way to see the formula for serial composition, given in Example~\ref{ex:serial_linear_comp} as analogous to the formula in Example~\ref{ex:serial}. The examples for parallel, splitting, and feedback compositions are similarly adapted from Examples~\ref{ex:parallel},~\ref{ex:split},~and~\ref{ex:feedback}, and a complete formula---that works for an arbitrary wiring diagram---will be given in Section~\ref{sec:formal_specs}.

\section{Continuous dynamical systems}\label{sec:CDS}

By $\Euc$ we mean the category of finite dimensional Euclidean spaces $\RR^n$, where $n\in\NN$, and smooth maps between them. Each such Euclidean space $S$ has the structure of a manifold, an in particular has a \emph{tangent bundle} $TS\cong S\times S$ \cite{Warner}. A point in $T\RR^n$ is a pair $(p,v)$ where $p\in \RR^n$ is a point and $v\in\RR^n$ is a vector emanating from that point. There is a smooth \emph{projection} function $\pi_S\colon TS\to S$ for any $S\in\Euc$, given by $\pi_S(p,v)=p$.

A smooth function $f\colon S\to TS$ assigns to each point $p\in S$ a pair $f(p)=(q,v)$, i.e., a vector at some possibly different point $q\in S$. Requiring it to be the same point, $p=q$, is the same as requiring that $\pi_S\circ f=\id_S$, in which case $f$ is called a \emph{vector field on $S$}. 

Let $A\in\Euc$ be another Euclidean space. An \emph{$A$-parameterized vector field on $S$} is a smooth function $f\colon A\times S\to TS$ such that $f(a,p)=(q,v)$ where $p=q$. This is summarized in the following \emph{commutative diagram}, where $\pr_S\colon A\times S\to S$ is the coordinate projection:
\begin{equation}\label{eqn:slice_Euc}
\begin{tikzcd}
	A\times S\ar[r,"f"]\ar[dr,"\pr_S"']&TS\ar[d,"\pi_S"]\\
	&S
\end{tikzcd}
\end{equation}
A vector field can be identified with a $\RR^0$-parameterized vector field, in which case $\pr_S\cong\id_S$. Concretely, we may denote an $A$-parameterized vector field $f\colon A\times\RR^n\to T\RR^n$ as on the left: 
\begin{equation}\label{eqn:pvf}
\begin{aligned}
	\dot{x}_1&=f_1(a,x_1,\ldots,x_n),\\
	\dot{x}_2&=f_2(a,x_1,\ldots,x_n),\\
	&\vdots\\
	\dot{x}_n&=f_n(a, x_1,\ldots,x_n),
\end{aligned}
\end{equation}
where each $f_i\colon\RR^n\to\RR$ is a smooth function and where $\dot{x}_i$ means $\frac{dx_i}{dt}$.

The following definition has been adapted from \cite{VSL}, where the authors consider open continuous dynamical systems and wiring diagrams. We use the term "state space" rather than the more typical "phase space", to fit with the nomenclature for discrete dynamical systems. We refer the reader to \cite{Strogatz} for more on dynamical systems.

\begin{definition}\label{def:CDS}

Let $A,B\in\Euc$ be Euclidean spaces. We define an \emph{$(A,B)$-open continuous dynamical system}, or \emph{$(A,B)$-continuous system} for short, to be a 3-tuple $(S,\rdt{f},\dyn{f})$, where 
\begin{compactitem}
\item $S\in\Euc$ is a Euclidean space, called the \emph{state space}, 
\item $\rdt{f}\colon S\to B$ is a smooth function, called the \emph{readout function}, and
\item $\dyn{f}\colon A\times S\to TS$ is an $A$-parameterized vector field on $S$ as in \eqref{eqn:pvf}, called the \emph{dynamics}.
\end{compactitem}
Let $\CS(A,B)$ denote the set of all $(A,B)$-continuous systems.

\end{definition}

For a continuous system $(S,\rdt{f},\dyn{f})$, the dynamics $\dyn{f}$ is plays the same role as the update function $\upd{f}$ does in a discrete dynamical system. Recall that, given a continuous system such as the one shown in \eqref{eqn:pvf} and a parameter $a\in A$, a tuple $(x_1,\ldots,x_n)$ is called a steady state, or equilibrium, if all derivatives vanish $\dot{x}_i=0$ there, i.e., $f_i(a,x_1,\ldots,x_n)=0$ for all $1\leq i\leq n$.

\begin{example}

Let $(S_1,\rdt{f_1},\dyn{f_1})$ and $(S_2,\rdt{f_2},\dyn{f_2})$ be continuous systems on $X_1$ and $X_2$, respectively. Here we define their serial composition $(T,\rdt{g},\dyn{g})$ on $Y$, shown diagrammatically below:
\[
\begin{tikzpicture}[oriented WD, bbx=1em, bby=1ex]
 \node[bb={1}{1},bb name=$X_1$] (X1) {};
 \node[bb={1}{1},right =2 of X1, bb name=$X_2$] (X2) {};
 \node[bb={1}{1}, fit={($(X1.north west)+(-1,3)$) ($(X1.south)+(0,-3)$) ($(X2.east)+(1,0)$)}, bb name = $Y$] (Y) {};
 \draw[ar] (Y_in1') to (X1_in1);
 \draw[ar] (X1_out1) to (X2_in1);
 \draw[ar] (X2_out1) to (Y_out1');
 \draw[label] 
	node at ($(Y_in1')!.5!(X1_in1)+(0,7pt)$)  {$A$}
	node at ($(X1_out1)!.5!(X2_in1)+(0,7pt)$)   {$B$}
	node at ($(X2_out1)!.5!(Y_out1')+(0,7pt)$)  {$C$};
\end{tikzpicture}
\]

To begin, suppose that the following four functions have been defined:
\begin{equation*}
\begin{array}{lll}
	b=\rdt{f_1}(x_1)&&\dot{x}_1= \dyn{f_1}(a,x_1)\\
	c=\rdt{f_2}(x_2)&&\dot{x}_2= \dyn{f_2}(b,x_2)
\end{array}
\end{equation*}
Here $x_1$ and $x_2$ are variables representing state vectors of arbitrary dimensions $n_1$ and $n_2$. For the composite system (in $Y$), the state variables are $x_1$ and $x_2$. The required formulas for the readout and dynamics are:
\begin{equation*}
\begin{array}{lll}
	&&\dot{x}_1= \dyn{f_1}(a,x_1) \\
	c=\rdt{f_2}(x_2)&&\dot{x}_2= \dyn{f_2}\big(\rdt{f_1}(x_1),x_2\big)
\end{array}
\end{equation*}

\end{example}

The examples for parallel, splitting, and feedback compositions are similarly adapted from Examples~\ref{ex:parallel},~\ref{ex:split},~and~\ref{ex:feedback}, and a complete formula will be given in Section~\ref{sec:formal_specs}.

\subsection{Steady states and $\epsilon$-approximation of continuous systems}

Regarding a continuous system in terms of its discrete approximation is compositional---it respects wiring diagrams of all sorts---as will be shown in Theorem~\ref{thm:approximation_CSDS}. Another compositional mapping is to regard each continuous system in terms of its bifurcation diagram. We will introduce these topics now, though the idea will be fleshed out in Section~\ref{sec:formal_specs}.

\begin{construction}\label{const:Euler}

Let $A$ and $B$ be spaces, and let $|A|$ and $|B|$ be their underlying sets. Let $f=(S,\rdt{f},\dyn{f})$ be an $(A,B)$-continuous system. Then for any real number $\epsilon>0$ we can construct an $(|A|,|B|)$-discrete system $(|S|,\rdt{f_\epsilon},\upd{f_\epsilon})$, called \emph{the $\epsilon$-approximation of $f$} as follows. For readouts define $\rdt{f_\epsilon}\coloneqq\rdt{f}$, and for updates use Euler's method:
\[
\upd{f_\epsilon}(a,x)\coloneqq x+\epsilon\cdot\dyn{f}(a,x).
\]

\end{construction}

\begin{definition}\label{def:steady_state_CDS}

Let $A,B\in\Euc$ be Euclidean spaces, and let $F=(S,\rdt{f},\dyn{f})$ be an $(A,B)$-continuous system. For $a\in A$ and $b\in B$, define an \emph{$(a,b)$-steady state} to be a state $s\in S$ such that $\dyn{f}(a,s)=0$ and $\rdt{f}(s)=b$. We denote the set of all $(a,b)$-steady states by
\[\StstS(F)_{a,b}\coloneqq\left\{s\in S\;\;\middle|\;\;\rdt{f}(s)=b,\quad\upd{f}(a,s)=0\right\}\]
and its count by $\Stst(F)_{a,b}\coloneqq\#\StstS(F)_{a,b}$.

\end{definition}

\begin{remark}\label{rem:bifurcation}

In case it is not clear, Definition~\ref{def:steady_state_CDS} is strongly related to the notion of \emph{bifurcation diagrams} \cite{Strogatz}, as we now explain.

Let $A,S\in\Euc$ be Euclidean spaces, and let $\dyn{f}\colon A\times S\to S$ be smooth. Suppose we take $B=S$, so the readout function can be the identity, $\rdt{f}=\id_B$, and let $F=(S,\rdt{f},\dyn{f})$, so we have $\Stst(F)\colon A\times B\to\NN_+$. However, for any $(a,b)\in A\times B$, the number of steady states $\StstS(F)(a,b)$ is either zero or one, because $\rdt{f}$ is injective. Thus the set of steady states can be drawn on an $A\times B$ coordinate system, by plotting a point at $(a,b)$ if and only if it is a steady state (or equilibrium). This almost gives the bifurcation diagram of the system, the exception being that it does not address stability issues, a discussion we save for Section~\ref{sec:linear_stability}. A major thrust of this paper is to show that when bifurcation diagrams are considered as matrices (see Corollary~\ref{cor:CS_steady_state}), they can be composed by matrix arithmetic when the corresponding dynamical systems are coupled via a wiring diagram. The matrix arithmetic of which we speak is discussed next, in Section~\ref{sec:Mat}.

\end{remark}

\section{Matrices (and wiring diagrams)}\label{sec:Mat}

We can also interpret boxes in a wiring diagram as being inhabited by matrices, whereby serial composition corresponds to matrix multiplication, parallel composition corresponds to matrix tensor (Kronecker) product, and feedback corresponds to partial trace. In this section we give several examples; a complete formula is given in Section~\ref{sec:formal_specs}. 

In order to multiply, tensor, or trace matrices, the entries do not have to be real or complex numbers. All that is necessary are associative addition and multiplication operations, and a distributive law. The mathematical object that handles this sort of thing is called a \emph{commutative semiring}. In fact, to be as general as possible we will want to allow infinite matrices, so we need to know what it means to add infinitely many numbers. For this we can extend to \emph{complete semirings}; see \cite{droste2009semirings} for details. Two important cases of complete semirings are: 
\begin{itemize}
\item the set of \emph{extended natural numbers}, denoted $\NN_+\coloneqq\NN\cup\{\infty\}$, and 
\item the set of \emph{extended real numbers} is $\RR_+\coloneqq\RR_{\geq 0}\cup\{\infty\}$.
\end{itemize}
In any complete semiring $R$, infinity plus anything is infinity; that is, for all $r\in R$, we have $r+\infty=\infty$. Also, $0\cdot\infty=0$ but for $r\neq 0$ we have $r\cdot\infty=\infty$.

\begin{definition}

Let $R$ be a complete semiring. For sets $A,B$, define an \emph{$(A,B)$-matrix in $R$} to be a function $M\colon A\times B\to R$. For elements $a\in A$ and $b\in B$, we refer to $M(i,j)\in R$ as the $(i,j)$-entry, and often denote it $M_{i,j}$. We denote the set of $(A,B)$-matrices of extended natural (resp. real) numbers by $\Mat_R(A,B)$. By default, we write $\Mat(A,B)$ when $R=\NN_+$.

\end{definition}

\begin{remark}

If $A$ and $B$ are finite sets, then a choice of total order on $A$ and $B$ is the same thing as a pair of bijection $A\cong\{1,2,\ldots,m\}$ and $B\cong\{1,2,\ldots,n\}$. This identification allows us to show the matrix as an array in the usual fashion:
\[
\left(
\begin{array}{cccc}
M_{1,1}&M_{1,2}&\cdots&M_{1,n}\\
M_{2,1}&M_{2,2}&\cdots&M_{2,n}\\
\vdots&\vdots&\ddots&\vdots\\
M_{m,1}&M_{m,2}&\cdots&M_{m,n}
\end{array}
\right)
\]
In Definitions~\ref{def:Mat_parallel}~and~\ref{def:Mat_wiring} we will give definitions for matrix manipulations (such as multiplication, Kronecker product, and trace), which are independent of ordering.

\end{remark}

\begin{example}\label{ex:serial_Mat}
We will give examples of matrices $M_1$ and $M_2$ inhabiting $X_1$ and $X_2$ and their serial composition $N$ inhabiting $Y$, shown diagrammatically below:
\[
\begin{tikzpicture}[oriented WD, bbx=1em, bby=1ex]
 \node[bb={1}{1},bb name=$X_1$] (X1) {};
 \node[bb={1}{1},right =2 of X1, bb name=$X_2$] (X2) {};
 \node[bb={1}{1}, fit={($(X1.north west)+(-1,3)$) ($(X1.south)+(0,-3)$) ($(X2.east)+(1,0)$)}, bb name = $Y$] (Y) {};
 \draw[ar] (Y_in1') to (X1_in1);
 \draw[ar] (X1_out1) to (X2_in1);
 \draw[ar] (X2_out1) to (Y_out1');
 \draw[label] 
	node at ($(Y_in1')!.5!(X1_in1)+(0,7pt)$)  {$A$}
	node at ($(X1_out1)!.5!(X2_in1)+(0,7pt)$)   {$B$}
	node at ($(X2_out1)!.5!(Y_out1')+(0,7pt)$)  {$C$};
\end{tikzpicture}
\]
Suppose that $|A|=2$, $|B|=2$, $|C|=3$, and let $M_1$ and $M_2$ be the following matrices:
\[
M_1\coloneqq\left(
\begin{array}{cc}
1&2\\
3&0
\end{array}
\right)
\qquad\qquad
M_2\coloneqq\left(
\begin{array}{ccc}
2&2&0\\
3&1&1
\end{array}
\right)
\]
Then their serial composition is just the usual matrix product $N=M_1M_2$, 
\[
N=\left(
\begin{array}{ccc}
8&4&2\\
6&6&0\\
\end{array}
\right)
\]
\end{example}

\begin{example}\label{ex:parallel_mat}
We will give examples of matrices $M_1$ and $M_2$ inhabiting $X_1$ and $X_2$ and their parallel composition $N$ inhabiting $Y$, shown diagrammatically below:
\[
\begin{tikzpicture}[oriented WD, bbx=1em, bby=1ex]
 \node[bb={1}{1},bb name=$X_1$] (X1) {};
 \node[bb={1}{1},below =2 of X1, bb name=$X_2$] (X2) {};
 \node[bb={2}{2}, fit={($(X1.north west)+(-1,3)$) ($(X2.south)+(0,-3)$) ($(X2.east)+(1,0)$)}, bb name = $Y$] (Y) {};
 \draw[ar] (Y_in1') to (X1_in1);
 \draw[ar] (Y_in2') to (X2_in1);
 \draw[ar] (X1_out1) to (Y_out1');
 \draw[ar] (X2_out1) to (Y_out2');
 \draw[label] 
	node at ($(Y_in1')!.5!(X1_in1)+(0,7pt)$)  {$A_1$}
	node at ($(Y_in2')!.5!(X2_in1)+(0,7pt)$)   {$A_2$}
	node at ($(X1_out1)!.5!(Y_out1')+(0,7pt)$)  {$B_1$}
	node at ($(X2_out1)!.5!(Y_out2')+(0,7pt)$) {$B_2$};
\end{tikzpicture}
\]
Suppose that $|A_1|=2$, $|B_1|=2$, $|A_2|=3$, and $|B_2|=2$, and let $M_1$ and $M_2$ be the following matrices:
\[
M_1\coloneqq\left(
\begin{array}{cc}
1&2\\
3&0
\end{array}
\right)
\qquad\qquad
M_2\coloneqq\left(
\begin{array}{cc}
2&2\\
3&1\\
1&0
\end{array}
\right)
\]
Then $N=M_1\otimes M_2$ is the Kronecker product \cite{SteebHardy}, 
\[
N=\left(
\begin{array}{cc|cc}
2&2&4&4\\
3&1&6&2\\
1&0&2&0\\\hline
6&6&0&0\\
9&3&0&0\\
3&0&0&0
\end{array}
\right)
\]

\end{example}

\begin{example}\label{ex:splitting_mat}
We will give examples of matrices $M_1$ and $M_2$ inhabiting $X_1$ and $X_2$ and their splitting compositions $N_1$ and $N_2$ inhabiting $Y_1$ and $Y_2$, shown diagrammatically below:
\[
\begin{tikzpicture}[oriented WD, bbx=1em, bby=1ex, bb port sep=3 pt]
 	\node[bb={1}{1},bb name=$X_1$] (X) {};
	\node[bb={1}{2}, fit={(X) ($(X.north east)+(1.2,3)$) ($(X.south west)+(-1.2,-3)$)}, bb name = $Y_1$] (Y) {};
	\draw[ar] (Y_in1') to (X_in1);
	\draw[ar] (X_out1) to (Y_out1');
	\draw[ar] (X_out1) to (Y_out2');
	\draw[label] 
		node at ($(Y_in1')!.5!(X_in1)+(0,7pt)$)  {$A$}
		node at ($(X_out1)!.5!(Y_out1')+(0,7pt)$)   {$B$}
		node at ($(X_out1)!.5!(Y_out2')-(0,7pt)$)   {$B$};
\end{tikzpicture}
\qquad\qquad
\begin{tikzpicture}[oriented WD, bbx=1em, bby=1ex, bb port sep=2 pt]
 	\node[bb={2}{1},bb name=$X_2$] (X) {};
	\node[bb={1}{1}, fit={(X) ($(X.north east)+(1.2,3)$) ($(X.south west)+(-1.2,-3)$)}, bb name = $Y_2$] (Y) {};
	\draw[ar] (Y_in1') to (X_in1);
	\draw[ar] (Y_in1') to (X_in2);
	\draw[ar] (X_out1) to (Y_out1');
	\draw[label] 
		node at ($(Y_in1')!.5!(X_in1)+(0,7pt)$)  {$A$}
		node at ($(Y_in1')!.5!(X_in2)-(0,7pt)$)   {$A$}
		node at ($(X_out1)!.5!(Y_out1')+(0,7pt)$)   {$B$};
\end{tikzpicture}
\]
Suppose that $|A|=2$, $|B|=3$, and let $M_1$ and $M_2$ be the following matrices (the vertical and horizontal bars below are only for ease of reading block matrices):
\[
M_1\coloneqq\left(
\begin{array}{ccc}
1&2&4\\
3&1&1
\end{array}
\right)
\qquad\qquad
M_2\coloneqq\left(
\begin{array}{ccc}
1&2&1\\
3&0&1\\\hline
2&1&2\\
0&1&4
\end{array}
\right)
\]
Then $N_1$ and $N_2$ are the matrices below:
\[
N_1\coloneqq\left(
\begin{array}{ccc|ccc|ccc}
1&0&0&0&2&0&0&0&4\\
3&0&0&0&1&0&0&0&1
\end{array}
\right)
\qquad\qquad
N_2\coloneqq\left(
\begin{array}{ccc}
1&2&1\\
0&1&4
\end{array}
\right)
\]

\end{example}

\begin{example}\label{ex:trace_mat}
We will give examples of a matrix $M$ inhabiting $X$ and its feedback composition $N$ inhabiting $Y$, shown diagrammatically below:
\[
\begin{tikzpicture}[oriented WD, bbx=1em, bby=1ex]
	\node[bb={2}{2}, bb name=$X$] (dom) {};
	\node[bb={1}{1}, fit={(dom) ($(dom.north east)+(1,4)$) ($(dom.south west)-(1,2)$)}, bb name = $Y$] (cod) {};
	\draw[ar,pos=20] (cod_in1') to (dom_in2);
	\draw[ar,pos=2] (dom_out2) to (cod_out1');
	\draw[ar] let \p1=(dom.north east), \p2=(dom.north west), \n1={\y2+\bby}, \n2=\bbportlen in (dom_out1) to[in=0] (\x1+\n2,\n1) -- (\x2-\n2,\n1) to[out=180] (dom_in1);
	\draw[label] 
		node[below left=2pt and 3pt of dom_in2]{$A$}
		node[below right=2pt and 3pt of dom_out2]{$B$}
		node[above left=4pt and 6pt of dom_in1] {$C$}
		node[above right=4pt and 6pt of dom_out1] {$C$};
\end{tikzpicture}
\]

Suppose that $|A|=2$, $|B|=3$, and $|C|=2$, and let $M$ be the following matrix:
\[
M\coloneqq\left(
\begin{array}{cc|cc|cc}
1&2&4&1&0&3\\
3&1&1&2&1&0\\\hline
1&2&1&0&3&2\\
0&1&2&3&4&2
\end{array}
\right)
\]
Then $N=\Tr^C_{A,B}(M)$ is the partial trace matrix, given by adding diagonals of each square block, as shown below:
\[
N\coloneqq\left(
\begin{array}{ccc}
2&6&0\\
2&4&5
\end{array}
\right)
\]

\end{example}

In general, if $M$ is a $(K\times I)\times(K\times J)$-matrix, its $K$-\emph{partial trace}, denoted $\Tr^K_{I,J}$ is the $(I\times J)$-matrix given by adding up the $K$-blocks; it is given explicitly by the formula
\begin{equation}\label{eqn:def_trace}
	\Tr^K_{I,J}(M)_{i,j}\coloneqq\sum_{k\in K}M_{(k,i),(k,j)}.
\end{equation}

\section{Introducing the compositionality of steady states}\label{sec:introducing_classification}

The classifying function $Q\colon\DS\to\Mat$ sends each discrete (or measurable) system to a matrix. What makes it interesting is that it is preserved under each type of composition: serial, parallel, splitting, and feedback. In other words, the matrix is a summary of the discrete system, but one that can be used losslessly in future computations. The following definition is completely analogous to Defintion~\ref{def:steady_state_CDS}; see Corollary~\ref{cor:CS_steady_state} for a formal comparison.

\begin{definition}\label{def:discrete_to_matrix}

Let $F=(S,\rdt{f},\upd{f})$ be an $(A,B)$-discrete system. For $a\in A$ and $b\in B$, recall the set of $(a,b)$-steady states from Definition~\ref{def:steady_state_DDS} and its count
\[
\Stst(F)_{a,b}=\#\{s\in S\mid \rdt{f}(s)=b \tn{ and } \upd{f}(a,s)=s\}
\]
We can consider this as a matrix $\Stst(F)\in\Mat(A,B)$, which we call the \emph{steady state matrix of $F$}.

\end{definition}

\begin{example}\label{ex:series_DS_matrix}

Let $A=\{\tn{T, F}\}$ and $B=\{\tn{Red, Green, Blue}\}$. In Example~\ref{ex:discrete_system} we wrote out an example of an $(A,B)$-discrete system $F_1=(S_1,\rdt{f_1},\upd{f_1})$. In this example, we put it in serial composition with a $(B,C)$-discrete system, where $C=\{\tn{Up, Down}\}$, and discuss the resulting system in terms of steady states.
\[
\begin{tikzpicture}[oriented WD, bbx=1em, bby=1ex]
 \node[bb={1}{1},bb name=$X_1$] (X1) {};
 \node[bb={1}{1},right =2 of X1, bb name=$X_2$] (X2) {};
 \node[bb={1}{1}, fit={($(X1.north west)+(-1,3)$) ($(X1.south)+(0,-3)$) ($(X2.east)+(1,0)$)}, bb name = $Y$] (Y) {};
 \draw[ar] (Y_in1') to (X1_in1);
 \draw[ar] (X1_out1) to (X2_in1);
 \draw[ar] (X2_out1) to (Y_out1');
 \draw[label] 
	node at ($(Y_in1')!.5!(X1_in1)+(0,7pt)$)  {$A$}
	node at ($(X1_out1)!.5!(X2_in1)+(0,7pt)$)   {$B$}
	node at ($(X2_out1)!.5!(Y_out1')+(0,7pt)$)  {$C$};
\end{tikzpicture}
\]
For the second box, define $F_2=(S_2,\rdt{f_2},\upd{f_2})$ as shown here:
\begin{equation}\label{eqn:second_box_DS}
\parbox{2.4in}{\footnotesize
\begin{tabular}{c | c || c | c}
\textbf{Input}&\textbf{State}&\textbf{Readout}&\textbf{Next state}\\\hline
Red&p&Up&p\\
Blue&p&Up&p\\
Green&p&Up&q\\
Red&q&Down&p\\
Blue&q&Down&r\\
Green&q&Down&q\\
Red&r&Up&q\\
Blue&r&Up&r\\
Green&r&Up&p
\end{tabular}
}
\hspace{.3in}
\parbox{3.5in}{
\begin{tikzpicture}[
	box/.style={
		rectangle,
		minimum size=6mm,
		very thick,
		draw=black, 
		top color=white, 
		bottom color=white!50!black!20, 
		align=center,
		font=\normalfont
	}]
	\node [box] at (0,0) (st1) {State: p\\\footnotesize Readout: Up};
	\node [box, below right = 1.25 and -.25 of st1] (st3) {State: r\\\footnotesize Readout: Up};
	\node [box, above right=1.25 and -.25 of st3] (st2) {State: q\\\footnotesize Readout: Down};
	\draw[->,thick, bend left=10] (st1) edge["\footnotesize Green"] (st2);
	\draw[->,thick, bend left=10] (st2) edge["\footnotesize Red"] (st1);
	\draw[->,thick, bend left=10] (st2) edge["\footnotesize Blue" near start] (st3);
	\draw[->,thick, bend left=10] (st3) edge["\footnotesize Red" near start] (st2);
	\draw[->,thick, bend left=10] (st3) edge["\footnotesize Green"] (st1);
	\draw[->,thick] (st1) edge [out=160, in=110,looseness=3,"\footnotesize Red"] (st1);
	\draw[->,thick] (st1) edge [out=200, in=250,looseness=3,"\footnotesize Blue"'] (st1);
	\draw[->,thick] (st2) edge [out=20, in=70,looseness=3,"\footnotesize Green"'] (st2);
	\draw[->,thick] (st3) edge [out=20, in=-20,looseness=3,"\footnotesize Blue"] (st3);
	\end{tikzpicture}
}
\end{equation}
When the two systems are composed in series, the resulting system has twelve states (e.g., (2,p)), is driven by inputs in $\{\tn{T, F}\}$, and produces output values in $\{\tn{Up, Down}\}$. We will not write the system out here, but instead compute its matrix of steady states. Note that steady states appear as loops in \eqref{eqn:second_box_DS}.

As will be discussed more formally in Section~\ref{sec:introducing_classification}, the matrix associated to such a system organizes each of its steady states in terms of 
\begin{compactitem}
\item the inputs that it \textbf{is fixed by}, and
\item the signal that it \textbf{outputs}.
\end{compactitem}
Thus the steady state matrix for the discrete system above presents the number of steady states for each (fixed by, output) combination:
\[
\begin{tabular}{c||c|c}
\parbox{.55in}{\tiny ~\hspace{.13in}Outputs:\\Is fixed by:}
&Up&Down\\\hline\hline
Red&1&0\\\hline
Blue&2&0\\\hline
Green&0&1
\end{tabular}
\qquad\tn{i.e.,}\qquad
\left(
\begin{tabular}{cc}
1&0\\
2&0\\
0&1
\end{tabular}
\right)
\]
The steady states of the discrete system shown in \eqref{eqn:my_state_machine} are summarized by the following matrix:
\[
\begin{tabular}{c||c|c|c}
\parbox{.55in}{\tiny ~\hspace{.13in}Outputs:\\Is fixed by:}
&Red&Blue&Green\\\hline\hline
T&1&0&0\\\hline
F&0&2&0
\end{tabular}
\qquad\tn{i.e.,}\qquad
\left(
\begin{tabular}{ccc}
1&0&0\\
0&2&0
\end{tabular}
\right)
\]

Serial composition of discrete systems was discussed in Example~\ref{ex:serial}. One can check that it has 12 states, five of which are steady states, but doing so can be tedious, and if there were more than two inner boxes it would only get more difficult, as we will see in the extended example in Section~\ref{sec:extended_example}. The compositionality of the steady state function says that we can compute the steady state matrix for the combined system by multiplying the matrices associated to the subsystems. Indeed, multiplying the above matrix by that from Example~\ref{ex:discrete_system}, we have
\[
\left(
\begin{tabular}{ccc}
1&0&0\\
0&2&0
\end{tabular}
\right)
\left(
\begin{tabular}{cc}
1&0\\
2&0\\
0&1
\end{tabular}
\right)
=
\left(
\begin{tabular}{cc}
1&0\\
4&0
\end{tabular}
\right)
\]
The combined system indeed has five steady states, one of which outputs 'Up' and the other four of which output 'Down'. We know that all of these occur when the input is 'T'; an input of 'F' results in no steady states.

\end{example}

We will not give examples for the other kinds of composition, e.g., parallel and feedback composition here. However, we will give a complete formula in Section~\ref{sec:formal_specs}.

\subsection{Exponential savings from dealing directly with steady state matrices}

When dynamical systems are interconnected to form a larger system, the resulting system may require a huge amount of data, as compared to the resulting steady state matrix. Two different variables are at work here: the size of the input alphabet and the total number of states. The former tends to grow exponentially in the number of input wires, and the latter tends to grow exponentially in the number of internal boxes. The dynamical system itself grows exponentially in both, whereas the matrix of steady states grows only in the number of input wires.

For example, consider the network of neurons in Figure~\ref{fig:complex_WD}. If each input wire carries two signals (say 'resting' or 'active'), and each box carries three states (e.g., 'depolarized', 'polarized', or 'hyperpolarized') then to express the totalized dynamical system would require a table with roughly $2^43^6=11,648$ rows, whereas the matrix of steady states would require a relatively small $16\times 16$ matrix. As more internal boxes are encapsulated by the wiring diagram, an exponential savings is achieved by considering the steady state matrix, rather than the whole dynamical system.

\chapter{Category-theoretic formulation of wiring diagrams}\label{sec:CT_formulation}

In this section, we explain how wiring diagrams are expressed using sets and functions. The idea is that there are sets of ports---input and output for each box---and there are functions that specify how one port is fed by another. The only technicality is dealing with the fact that each port carries a certain alphabet of symbols, and we will need to take them into account. For example, if one port is connected to another, the two should be using the same alphabet.

In order to make these ideas precise, we use the language of category theory. We begin with a very brief background section.

\section{Category theory references}\label{sec:more_background}

We assume the reader is familiar with the basic definitions of category theory, namely \emph{categories}, \emph{functors}, and \emph{natural transformations}. For example, we will often consider $\Set$, the category of sets and functions, as well as functors $\cat{C}\to\Set$ where $\cat{C}$ is some other category.

Just to fix notation, we recall some basic definitions. A category $\cat{C}$ comes with a set $\Ob\cat{C}$ of \emph{objects}. If $X,Y\in\cat{C}$ are objects, the pair is assigned a set $\cat{C}(X,Y)$ of \emph{morphisms}; if $f\in\cat{C}(X,Y)$ is a morphism, it may be denoted $f\colon X\to Y$. The category also has an identity $\id_X\in\cat{C}(X,X)$ for each object $X$ and a composition formula $\circ\colon\cat{C}(Y,Z)\times\cat{C}(X,Y)\to\cat{C}(X,Z)$. We may write $X\in\cat{C}$ in place of $X\in\Ob\cat{C}$, e.g., we have been writing $X\in\Set$. Similarly, if $X\in\Set$ is a set, we may write $X\to\cat{C}$ to denote a function $X\to\Ob\cat{C}$. 

Some categories, such as $\Set$, are closed under taking finite products, denoted $\times$; we call such categories \emph{finite product categories}. In fact $\Set$ is also closed under taking finite coproducts (called disjoint unions and denoted $+$). We refer the reader to \cite{MacLane}, \cite{Awodey}, or \cite{CT4S} (in decreasing order of difficulty) for background on all the above ideas. 

Both products and coproducts are examples of \emph{monoidal structures} on $\Set$. We will also be interested in monoidal structures on other categories. We will also use \emph{lax monoidal functors}, which are functors that interact coherently with monoidal structures. We refer the reader to \cite{Leinster} for specific background on monoidal structures and lax monoidal functors. See also \cite{VSL} for a paper on wiring diagrams and continuous dynamical systems that uses the above ideas.

The category theory we use in this paper is not very sophisticated, and readers who are unfamiliar with category theory are encouraged to lightly skim those areas---such as Section~\ref{sec:TFS_DP}---which are purely about setting up categorical machinery, and focus instead on examples. The paper concludes with an extended example in Section~\ref{sec:extended_example}.

\section{Typed finite sets and their dependent products}\label{sec:TFS_DP}

We first want to define formally what we mean by boxes of arbitrary shape, e.g., 
\begin{equation}\label{eqn:arb_box}
\begin{tikzpicture}[oriented WD, bbx=.1cm, bby =.1cm, bb port sep=.1cm]
	\node [bb={2}{3}] (X) {$X$};
	\draw[label] 
		node[left=.1 of X_in1]  {$A_1$}
		node[left=.1 of X_in2]  {$A_2$}
		node[right=.1 of X_out1] {$B_1$}
		node[right=.1 of X_out2] {$B_2$}
		node[right=.1 of X_out3] {$B_3$};
\end{tikzpicture}
\end{equation}
where $A_1$, $A_2$, $B_1$, $B_2$, and $B_3$ are sets, measurable spaces, vector spaces, or Euclidean spaces. To do so, we now introduce the notion of typed finite sets.

\subsection{Typed finite sets}

The categories $\Set$, $\Lin$, $\Euc$, and $\Meas$ are finite product categories, as discussed in Section~\ref{sec:more_background}.

\begin{definition}\label{def:TFS}

Fix a finite product category $\cat{C}$. The category of \emph{$\cat{C}$-typed finite sets}, denoted $\TFS_{\cat{C}}$, is defined as follows. An object in $\TFS_{\cat{C}}$ is a finite set of objects in $\cat{C}$, 
\[\TFS_{\cat{C}}:=\{(P,\tau)\; |\; P\in\FinSet, \tau\colon P\to\cat{C})\}.\] 
If $\PP=(P,\tau)$ is a typed finite set, we call an element $p\in P$ a \emph{port}; we sometimes write $p\in\PP$ by abuse of notation. We call the object $\tau(p)\in\cat{C}$ the \emph{type} of port $p$. If $P=\{1,2,\ldots, n\}$ for some $n\in\NN$, it is often convenient to denote $(P,\tau)$ by the sequence $\seq{\tau(1),\ldots,\tau(n)}$. There is a unique typed finite set with an empty set $P=\emptyset$ of ports, which we denote by $0\coloneqq\langle\ \rangle$. 

A morphism $\gamma\colon(P,\tau)\to (P',\tau')$ in $\TFS_{\cat{C}}$ consists of a function $\gamma\colon P\to P'$ which \emph{respects types} in the sense that for every $p\in P$ one has $\tau'\big(\gamma(p)\big)=\tau(p)$, i.e., such that the following diagram of finite sets commutes:
\[
\begin{tikzcd}[column sep=0pt]
P \ar[rr,"\gamma"] \ar[rd,"\tau"']
& {}
& P' \ar[ld,"{\tau'}"]\\
&\cat{C}
\end{tikzcd}
\]
We refer to the morphisms of $\TFS_{\cat{C}}$ as \emph{$\cat{C}$-typed functions}. We may elide the reference to $\cat{C}$ if it is clear from context.

Given two typed finite sets, $\PP_1\coloneqq(P_1,\tau_1)$ and $\PP_2\coloneqq(P_2,\tau_2)$, we can form their sum $\PP_1+\PP_2\coloneqq(P_1+P_2,\tau_1+\tau_2)$, where $P_1+P_2$ is the disjoint union of $P_1$ and $P_2$, and $\tau_1+\tau_2$ is equal to $\tau_i$ when restricted to $P_i$, for $i=1,2$. Thus we have a \emph{symmetric monoidal structure} on $\TFS$, where the monoidal unit is $0$.

\end{definition}

Example~\ref{ex:TFS} skips ahead a little to show what we are building toward.

\begin{example}\label{ex:TFS}

Suppose the five labels ($A_1, A_2, B_1, B_2, B_3$) below refer to objects in some category $\cat{C}$.
\[
\begin{tikzpicture}[oriented WD, bbx=.1cm, bby =.1cm, bb port sep=.1cm]
	\node [bb={2}{3}] (X) {$X$};
	\draw[label] 
		node[left=.1 of X_in1]  {$A_1$}
		node[left=.1 of X_in2]  {$A_2$}
		node[right=.1 of X_out1] {$B_1$}
		node[right=.1 of X_out2] {$B_2$}
		node[right=.1 of X_out3] {$B_3$};
\end{tikzpicture}
\]
The left-hand (input) side and the right-hand (output) side of box $X$ can be represented by the typed finite sets 
\begin{equation}\label{eqn:two_TFSs}
\inp{X}=\seq{A_1,A_2}\quad\tn{and}\quad\outp{X}=\seq{B_1,B_2,B_3}
\end{equation}
respectively. There are many ways to break $X$ up into the sum of smaller boxes while maintaining the $\cat{C}$-labels of each wire. For example, 
\begin{equation}\label{eqn:TFS_broken}
\begin{tikzpicture}[oriented WD, baseline=(X.center), bbx=.1cm, bby =.1cm, bb port sep=.1cm]
	\node [bb={2}{3}] (X) {$X$};
	\draw[label] 
		node[left=.1 of X_in1]  {$A_1$}
		node[left=.1 of X_in2]  {$A_2$}
		node[right=.1 of X_out1] {$B_1$}
		node[right=.1 of X_out2] {$B_2$}
		node[right=.1 of X_out3] {$B_3$};
\end{tikzpicture}
\quad = \quad
\begin{tikzpicture}[oriented WD, baseline=($(X1.north)!.5!(X2.south)$), bbx=.1cm, bby =.1cm, bb port sep=.1cm]
	\node [bb={1}{2}] (X1) {$X_1$};
	\node [below=.5 of X1] (boxplus) {$\boxplus$};
	\node [bb={1}{1}, below=.5 of boxplus] (X2) {$X_2$};
	\draw[label] 
		node[left=.1 of X1_in1]  {$A_1$}
		node[left=.1 of X2_in1]  {$A_2$}
		node[right=.1 of X1_out1] {$B_1$}
		node[right=.1 of X1_out2] {$B_2$}
		node[right=.1 of X2_out1] {$B_3$};
\end{tikzpicture}
\quad = \quad
\begin{tikzpicture}[oriented WD, baseline=($(X1.north)!.5!(X2.south)$), bbx=.1cm, bby =.1cm, bb port sep=.1cm]
	\node [bb={0}{1}] (X1) {$X_1'$};
	\node [below=.5 of X1] (boxplus) {$\boxplus$};
	\node [bb={2}{2}, below=.5 of boxplus] (X2) {$X_2'$};
	\draw[label] 
		node[left=.1 of X2_in1]  {$A_1$}
		node[left=.1 of X2_in2]  {$A_2$}
		node[right=.1 of X1_out1] {$B_1$}
		node[right=.1 of X2_out1] {$B_2$}
		node[right=.1 of X2_out2] {$B_3$};
\end{tikzpicture}
\quad=\quad \tn{etc....}
\end{equation}
This will be made precise in Definition~\ref{def:box}.

\end{example}

\subsection{Dependent product of a typed finite set}

Having multiple ports is useful for allowing different sorts of information to flow around within a wiring diagram. However, it terms of dynamical systems, having three input ports $\seq{A,B,C}$ is the same as having one input port $A\times B\times C$. The next definition simply formalizes this notion, and a similar one for morphisms of typed finite sets.

\begin{definition} \label{def:depprod}
Let $\cat{C}$ be a finite product category, and suppose that $\PP\coloneqq(P,\tau)\in\TFS_{\cat{C}}$ is a typed finite set. Its \emph{dependent product} $\dProd{\PP}\in\cat{C}$ is defined as the product in $\cat{C}$,
\[\dProd{(P,\tau)}:=\prod_{p\in P}\tau(p).\] 
Given a typed function $\gamma\colon (P,\tau)\to (P',\tau')$ in $\TFS_{\cat{C}}$ we define 
\[\dProd{\gamma}\colon \dProd{(P',\tau')}\to\dProd{(P,\tau)}\]
using the universal property of products in the evident way. It is a simultaneous generalization of projection $\pr\colon A\times B\to A$, diagonal $A\to A\times A$, and reordering $A\times B\to B\times A$. For example, suppose that $P=\{1,\ldots,p\}$ and $P'=\{1,\ldots,p'\}$ are finite ordinals. Then $\dProd{\gamma}$ is given on an element $(a_1,\ldots,a_{p'})\in\dProd{(P',\tau')}$ by the formula
\begin{equation}\label{eqn:dprod_formula}
\dProd{\gamma}(a_1,\ldots,a_{p'})\coloneqq(a_{\gamma(1)},\ldots,a_{\gamma(p)}).
\end{equation}
It is easy to check that dependent product defines a functor, 
\[\dProd{\;\cdot\;}\colon\TFS_{\cat{C}}\op\to\cat{C}.\]
\end{definition}

\begin{lemma}\label{lemma:dep_prod}
The dependent product functor sends coproducts in $\TFS_{\cat{C}}$ to products in $\cat{C}$. That is, we have a natural isomorphism
\[
\dProd{\PP_1}\times\dProd{\PP_2}\cong\dProd{\PP_1+\PP_2}.
\]
\end{lemma}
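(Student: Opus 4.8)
The plan is to unwind both sides of the claimed isomorphism directly from the definition of the dependent product as a product in $\cat{C}$, and then invoke the fact that products in any category are associative and unital up to canonical isomorphism. Write $\PP_1 = (P_1,\tau_1)$ and $\PP_2 = (P_2,\tau_2)$, so that by Definition~\ref{def:TFS} the coproduct is $\PP_1 + \PP_2 = (P_1 + P_2, \tau_1 + \tau_2)$, where $P_1 + P_2$ is the disjoint union and $\tau_1 + \tau_2$ restricts to $\tau_i$ on $P_i$. Then by Definition~\ref{def:depprod},
\[
\dProd{\PP_1 + \PP_2} = \prod_{p \in P_1 + P_2} (\tau_1 + \tau_2)(p).
\]
First I would observe that, since $\{P_1, P_2\}$ partitions $P_1 + P_2$ and $(\tau_1+\tau_2)$ agrees with $\tau_i$ on $P_i$, the indexed family $\{(\tau_1+\tau_2)(p)\}_{p \in P_1 + P_2}$ is the concatenation of the families $\{\tau_1(p)\}_{p \in P_1}$ and $\{\tau_2(p)\}_{p \in P_2}$. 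A product over a disjoint union of index sets is canonically isomorphic to the product of the two partial products — this is the standard ``grouping'' isomorphism for limits, which holds in any category possessing the relevant products (and $\cat{C}$ is a finite product category, so all these finite products exist). Hence
\[
\prod_{p \in P_1 + P_2} (\tau_1+\tau_2)(p) \;\cong\; \Bigl(\prod_{p \in P_1} \tau_1(p)\Bigr) \times \Bigl(\prod_{p \in P_2} \tau_2(p)\Bigr) = \dProd{\PP_1} \times \dProd{\PP_2},
\]
which is the desired isomorphism.

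For naturality, I would check that the isomorphism is compatible with morphisms $\gamma_i \colon \PP_i \to \PP_i'$ in $\TFS_{\cat{C}}$, which by functoriality of $\dProd{\;\cdot\;}$ (established at the end of Definition~\ref{def:depprod}) induce $\dProd{\gamma_i} \colon \dProd{\PP_i'} \to \dProd{\PP_i}$, while $\gamma_1 + \gamma_2 \colon \PP_1 + \PP_2 \to \PP_1' + \PP_2'$ induces $\dProd{\gamma_1 + \gamma_2}$. Using the explicit formula~\eqref{eqn:dprod_formula} (reindexing tuples along the underlying function of $\gamma$), and the fact that the underlying function of $\gamma_1 + \gamma_2$ is $\gamma_1$ on the first summand and $\gamma_2$ on the second, one sees that applying $\dProd{\gamma_1 + \gamma_2}$ and then the grouping isomorphism equals first applying the grouping isomorphism and then $\dProd{\gamma_1} \times \dProd{\gamma_2}$. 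This is a routine diagram chase using the universal property of products, and I would present it at that level of detail rather than expanding every component.

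The only genuine content — and the step most worth stating carefully — is the grouping isomorphism $\prod_{p \in P_1 + P_2} \cong \bigl(\prod_{p \in P_1}\bigr) \times \bigl(\prod_{p \in P_2}\bigr)$ together with its naturality in the index data; everything else is bookkeeping. I do not anticipate a serious obstacle, since $\cat{C}$ is assumed to be a finite product category and $P_1, P_2$ are finite sets, so no subtleties about existence of infinite products or choice arise. If one wanted to be maximally economical, one could even phrase the whole argument as: $\dProd{\;\cdot\;}$ is the right Kan extension / composite sending a typed finite set to the product of its type function, and it sends the coproduct (a colimit in $\TFS_{\cat{C}}$) to the corresponding limit in $\cat{C}$ because products distribute over disjoint unions of index sets — but the elementary partition argument above is cleaner for the paper's intended audience.
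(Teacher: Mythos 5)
Your proof is correct: the grouping isomorphism for products indexed over a disjoint union, plus the naturality check against the reindexing formula~\eqref{eqn:dprod_formula}, is exactly the standard argument. The paper in fact states Lemma~\ref{lemma:dep_prod} without any proof at all, treating it as evident, so your write-up simply supplies the routine details the author omitted.
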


\begin{example}

Consider Example~\ref{ex:TFS}. The dependent products of the sets in \eqref{eqn:two_TFSs} are 
\[\vinp{X}=A_1\times A_2\quad\tn{and}\quad \voutp{X}=B_1\times B_2\times B_3\]
and similarly $\vinp{X}=\vinp{X_1}\times\vinp{X_2}$ and $\voutp{X}=\voutp{X_1}\times\voutp{X_2}$ in \eqref{eqn:TFS_broken}.

\end{example}

\section{The monoidal category $\cat{W}$ of wiring diagrams}

\begin{definition} \label{def:box}
Let $\cat{C}$ be a finite product category. A \emph{$\cat{C}$-box} $X$ (called simply a \emph{box} if $\cat{C}$ is clear from context) is an ordered pair of typed finite sets, 
\[X=(\inp{X},\outp{X})\in\TFS_{\cat{C}}\times\TFS_{\cat{C}}.\]
We refer to elements $a\in\inp{X}$ and $a'\in\outp{X}$ as \emph{input ports} and \emph{output ports}, respectively.

Given two boxes $X_1,X_2$, we define their \emph{sum} (or \emph{parallel composition}), denoted $X_1\boxplus X_2$, by
\[\inp{(X_1\boxplus X_2)}\coloneqq\inp{X_1}+\inp{X_2}\qquad\outp{(X_1\boxplus X_2)}\coloneqq\outp{X_1}+\outp{X_2}\]
We define the \emph{closed box}, denoted $\square$, to be the box with an empty set of input and output ports, 
\[\square\coloneqq(0,0).\]

If $X$ is a box, we denote by $\dProd{X}$ the pair $(\vinp{X},\voutp{X})\in\cat{C}\times\cat{C}$. Similarly, denote 
\[\dProd{X_1}\boxtimes\dProd{X_2}\coloneqq\left(\vinp{X_1}\times\vinp{X_2},\voutp{X_1}\times\voutp{X_2}\right).\] 

\end{definition}

\begin{remark}

By Lemma~\ref{lemma:dep_prod}, there is an isomorphism $\dProd{X_1\boxplus X_2}\cong\dProd{X_1}\boxtimes\dProd{X_2}$, and there is an isomorphism $\dProd{\square}\cong(1,1)$, where $1$ denotes any one-element set.

\end{remark}

The following definition is relative to a choice $\cat{C}$ of finite product category. That is, wherever we write "typed function", we mean $\cat{C}$-typed function in the sense of Definition~\ref{def:TFS}.

\begin{definition}\label{def:wiring_diagram}

Let $X=(\inp{X},\outp{X})$ and $Y=(\inp{Y},\outp{Y})$ be boxes. A \emph{wiring diagram} $\varphi\colon X\to Y$ is a pair $(\inp{f},\outp{f})$ of typed functions
\begin{align}\label{eqn:WD}
\inp{\varphi}&\colon\inp{X}\to\inp{Y}+\outp{X}\\\nonumber
\outp{\varphi}&\colon\outp{Y}\to\outp{X}
\end{align}
Define the \emph{identity wiring diagram}, denoted $\id_X\colon X\to X$, by setting $\inp{(\id_X)}$ to be the coproduct inclusion $\inp{X}\to\inp{X}+\outp{X}$, and setting $\outp{(\id_X)}$ to be the identity function, $\outp{X}\to\outp{X}$.

Given wiring diagrams $\varphi_1\colon X_1\to Y_1$ and $\varphi_2\colon X_2\to Y_2$, we define their \emph{sum}, denoted $\varphi_1\boxplus\varphi_2$, by using the cocartesian monoidal structure on $\FinSet$:
\[
\inp{(\varphi_1\boxplus\varphi_2)}\coloneqq\inp{\varphi_1}+\inp{\varphi_2}
\qquad
\outp{(\varphi_1\boxplus\varphi_2)}\coloneqq\outp{\varphi_1}+\outp{\varphi_2}
\]

\end{definition}

\begin{example}\label{ex:WD_explicit}

Consider the wiring diagram shown to the right below. It is obtained by taking the monoidal product of---i.e., putting in parallel---the inner boxes, $X=X_1\boxplus X_2$. Thus it is equivalent to the "operadic" diagram shown on the left:
\begin{equation}\label{eq:WD_two_ways}
\begin{tikzpicture}[oriented WD,baseline=(Y.center), bbx=1em, bby=1.2ex]
 \node[bb={2}{2},bb name=$X_1$] (X1) {};
 \node[bb={2}{1},below right = -3 and 3 of X1, bb name=$X_2$] (X2) {};
 \node[bb={2}{2}, fit={($(X1.north west)+(-1,2)$) ($(X2.south)+(0,-2)$) ($(X2.east)+(1,0)$)}, bb name = $Y$] (Y) {};
 \draw[ar] (Y_in1') to (X1_in1);
 \draw[ar] (X1_out2) to (X2_in1);
 \draw[ar] (Y_in2') to (X2_in2);
 \draw[ar] (X1_out1) to (Y_out1');
 \draw (X2_out1) to (Y_out2');
 \draw[ar] let \p1=(X2.south east), \p2=(X1.south west), \n1={\y1-\bby}, \n2=\bbportlen in
 (X2_out1) to[in=0] (\x1+\n2,\n1) -- (\x2-\n2,\n1) to[out=180] (X1_in2);
 \draw [label]
 	node[above left=.4 and 0 of X1_in1] {$a$}
	node[above left=.4 and 0 of X1_in2] {$b$}
	node[above left=.4 and 0 of X2_in1] {$c$}
	node[above left=.2 and 0 of X2_in2] {$d$}
	node[above right=.4 and 0 of X1_out1] {$e$}
	node[above right=.4 and 0 of X1_out2] {$f$}
	node[above right=.4 and 0 of X2_out1] {$g$}
	node[above left=.4 and 0 of Y_in1] {$h$}
	node[above left=.4 and 0 of Y_in2] {$i$}
	node[above right=.4 and 0 of Y_out1] {$j$}
	node[above right=.4 and 0 of Y_out2] {$k$};
\end{tikzpicture}
\qquad\approx\qquad
\begin{tikzpicture}[oriented WD,baseline=(Y.center), bbx=2em, bby=1.2ex, bb port sep=1]
\begin{scope}[bbx=.25em, bb min width=.25em, bby=.5em, bb port sep=1, black!20!white]
	\node[bb={2}{2}] (X1) {$\scriptstyle X_1$};
	\node[bb={2}{1}, below =of X1] (X2) {$\scriptstyle X_2$};
\end{scope}
\node[bb={4}{3}, fit={($(X1.north)+(0,2)$) (X2)}, bb name=$X$] (X) {};
\begin{scope}[black!20!white]
	\draw (X_in1') to (X1_in1);
	\draw (X_in2') to (X1_in2);
	\draw (X_in3') to (X2_in1);
	\draw (X_in4') to (X2_in2);
	\draw (X1_out1) to (X_out1');
	\draw (X1_out2) to (X_out2');
	\draw (X2_out1) to (X_out3');
\end{scope}
\node[bb={2}{2}, fit={($(X.north east)+(.5,3)$) ($(X.south west)-(.5,2)$)}, bb name = $Y$] (Y) {};
\draw[ar] (Y_in1') to (X_in1);
\draw[ar] (Y_in2') to (X_in4);
\draw[ar] (X_out1) to (Y_out1');
\draw[ar] (X_out3) to (Y_out2');
\draw[ar] let \p1=(X.south east), \p2=(X.south west), \n1={\y1-\bby}, \n2=\bbportlen in
	(X_out2) to[in=0] (\x1+\n2,\n1) -- (\x2-\n2,\n1) to[out=180] (X_in3);
\draw[ar] let \p1=(X.south east), \p2=(X.south west), \n1={\y1-2*\bby}, \n2={\bbportlen} in
	(X_out3) to[in=0] (\x1+\n2,\n1) -- (\x2-\n2,\n1) to[out=180] (X_in2);	
\draw [label]
 	node[above left=.4 and 0 of X_in1] {$a$}
	node[above left=.4 and 0 of X_in2] {$b$}
	node[above left=.4 and 0 of X_in3] {$c$}
	node[above left=.2 and 0 of X_in4] {$d$}
	node[above right=.4 and 0 of X_out1] {$e$}
	node[above right=.4 and 0 of X_out2] {$f$}
	node[above right=.4 and 0 of X_out3] {$g$}
	node[above left=.4 and 0 of Y_in1] {$h$}
	node[above left=.4 and 0 of Y_in2] {$i$}
	node[above right=.4 and 0 of Y_out1] {$j$}
	node[above right=.4 and 0 of Y_out2] {$k$};
\end{tikzpicture}
\end{equation}
The right-hand picture shows a wiring diagram $\varphi\colon X\to Y$ in the sense of Definition~\ref{def:wiring_diagram}.%
\footnote{Inside the box labeled $X$ we have faintly drawn $X_1$ and $X_2$, because $X=X_1\boxplus X_2$; however, the morphism $\varphi\colon X\to Y$ does not refer to these inner boxes.}  
The typed functions $\inp{\varphi}\colon\inp{X}\to\outp{X}+\inp{Y}$ and $\outp{\varphi}\colon\outp{Y}\to\outp{X}$ defining $\varphi$, as in \eqref{eqn:WD}, are shown in the following table:
\begin{equation}\label{eqn:WDtables}
\begin{array}{| c | c |}
\hline
\tn{port}\in\inp{X}&\inp{\varphi}(\tn{port})\\\hline
a&h\\
b&g\\
c&f\\
d&i\\\hline
\end{array}
\qquad\qquad
\begin{array}{| c | c |}
\hline
\tn{port}\in\outp{Y}&\outp{\varphi}(\tn{port})\\\hline
j&e\\
k&g\\&\\&\\\hline
\end{array}
\end{equation}
For example, the fact that wire $g$ is shown splitting (feeding both $b$ and $k$) in the wiring diagram pictures above \eqref{eq:WD_two_ways} corresponds to the fact that $g$ appears twice, next to $b$ and $k$, in the tables \eqref{eqn:WDtables}.

\end{example}

Composition of wiring diagrams is visually straightforward. For example, the picture below shows four wiring diagrams: two "interior" wiring diagrams $\varphi_1\colon X_{11},X_{12},X_{13}\to Y_1$ and $\varphi_2\colon X_{21},X_{22}\to Y_2$, an "exterior" wiring diagram $\psi\colon Y_1, Y_2\to Z$ (shown again on the right):
\[
\begin{tikzpicture}[oriented WD, baseline=(Z.center), bbx = .5cm, bby =.8ex, bb min width=.5cm, bb port length=2pt, bb port sep=1]
  \node[bb={1}{3}] (X11) {$\scriptstyle X_{11}$};
  \node[bb={2}{1}, right=1.5 of X11] (X12) {$\scriptstyle X_{12}$};
  \node[bb={3}{2}, below right=of X12] (X13) {$\scriptstyle X_{13}$};
  \node[bb={2}{2}, fit={(X11) (X12) (X13) ($(X12.north)+(0,2)$) ($(X13.east)+(.5,0)$)}, dashed] (Y1) {};
  \node[bb={2}{1}, below left=6 and 0 of X13] (X21) {$\scriptstyle X_{21}$};
  \node[bb={0}{2},below left=of X21] (X22) {$\scriptstyle X_{22}$};
  \node[bb={1}{2}, fit=(X21) (X22), dashed] (Y2) {};
  \node[bb={2}{3}, fit={($(Y1.north)+(0,1)$) ($(Y2.south)-(0,1)$) ($(Y1.west)-(.25,0)$) ($(Y1.east)+(.25,0)$)}] (Z) {};
  \draw[label] 
	node at ($(Y1.north west)+(.5,-2)$)  {$Y_1$}
	node at ($(Y2.north west)+(.5,-2)$)  {$Y_2$}
	node at ($(Z.north west)+(.5,-2)$)  {$Z$};
  \begin{scope}[gray]
  \draw[ar] (Z_in1') to (Y1_in1);
  \draw[ar] (Z_in2') to (Y2_in1);
  \draw (Y2_in1') to[in looseness=2] (X21_in1);
  \draw[ar] (X22_out1) to (X21_in2);
  \draw (X22_out2) to (Y2_out2');
  \draw[ar] (Y2_out2) to (Z_out3');
  \draw (Y1_in1') to (X11_in1);
  \draw (X13_out2) to (Y1_out2');
  \draw (Y1_in2') to[in looseness=2] (X13_in3);
  \draw (X11_out3) to[in looseness=2] (X13_in2);%
  \draw (X12_out1) to (X13_in1);
  \draw (X11_out2) to (X12_in2);
  \draw (Y1_out1) to (Z_out1');
  \draw[ar] (Y1_out2) to (Z_out2');
  \draw (X21_out1) to (Y2_out1');
  \draw[ar] let \p1=(Y2.north east), \p2=(Y1.south west), \n1={\y1+2*\bby}, \n2=\bbportlen in
  	(Y2_out1) to[in=0] (\x1+\n2,\n1) -- (\x2-\n2,\n1) to[out=180] (Y1_in2);
  \draw[ar] let \p1=(X13.north east), \p2=(X12.north west), \n1={\y2+\bby}, \n2=\bbportlen in
  	(X13_out1) to[in=0] (\x1+\n2,\n1) -- (\x2-\n2,\n1) to[out=180] (X12_in1);
  \draw let \p1=(X12.north west), \p2=(X13.north east), \n1={\y1+2*\bby}, \n2=\bbportlen in
  	(X11_out1) to (\x1-2*\n2,\n1) -- (\x2+2*\n2,\n1) to[out=0] (Y1_out1');
  \end{scope}
  \end{tikzpicture}
  \qquad\qquad
  \begin{tikzpicture}[oriented WD, baseline=(Z.center), bbx = 1cm, bby =1.6ex, bb min width=1cm, bb port length=2pt, bb port sep=1]
  	\node[bb={1}{2}, bb name={$Y_2$}] (Y2) {};
	\node[bb={2}{2}, above right = -1/3 and 1 of Y2, bb name={$Y_1$}] (Y1) {};
	\node[bb={2}{3}, fit={($(Y2.south west)+(0,-1/3)$) ($(Y1.north east)+(0,1)$)}, bb name={$Z$}] (Z) {};
	\draw[ar] (Z_in1') to (Y1_in1);
	\draw[ar] (Z_in2') to (Y2_in1);
	\draw[ar] (Y2_out1) to (Y1_in2);
	\draw[ar] (Y1_out1) to (Z_out1');
	\draw[ar] (Y1_out2) to (Z_out2');
	\draw[ar] (Y2_out2) to (Z_out3');
  \end{tikzpicture}
\]
From $\varphi_1,\varphi_2$, and $\psi$, we can erase the dashed boxes and derive a five-box wiring diagram $X_{11},X_{12}, X_{13}, X_{21}, X_{22}\to Z$. We call it their \emph{composition} and denote it $\omega=\psi\circ(\varphi_1,\varphi_2)$. This corresponds to the composition of $X\To{\varphi} Y\To{\psi} Z$ in a symmetric monoidal category $\cat{W}$ as described in Definition~\ref{def:comp_in_W}, where $X=X_{11}+X_{12}+X_{13}+X_{12}+X_{22}$ and $Y=Y_1+Y_2$.

\begin{definition}\label{def:comp_in_W}

Let $\cat{C}$ be a finite product category. Given wiring diagrams $\varphi\colon X\to Y$ and $\psi\colon Y\to Z$, we define their \emph{composition}, denoted $\psi\circ\varphi\colon X\to Z$, by the following compositions in $\TFS_{\cat{C}}$:
\[
\begin{tikzcd}[row sep=6ex, column sep=6em,baseline=(current bounding box.north)]
\inp{X}
	\ar[d,"\inp{\varphi}"']\ar[r,dashed,"\inp{(\psi\circ\varphi)}"]&
\inp{Z}+\outp{X}
	\\
\inp{Y}+\outp{X}
	\ar[d,"\inp{\psi}+\outp{X}"']\\
\inp{Z}+\outp{Y}+\outp{X}
	\ar[r,"\inp{Z}+\outp{\varphi}+\outp{X}"']&
\inp{Z}+\outp{X}+\outp{X}
	\ar[uu,"\inp{Z}+\nabla_{\outp{X}}"']
\end{tikzcd}
\qquad\qquad
\begin{tikzcd}[row sep=6ex,column sep=1em, baseline=(current bounding box.north)]
\outp{Z}
	\ar[rr,dashed,"\outp{(\psi\circ\varphi)}"]\ar[rd,"\outp{\psi}"']&&
\outp{X}
	\\
	&
\outp{Y}
	\ar[ru,"\outp{\varphi}"']
\end{tikzcd}
\]
It is straightforward to show that this composition formula is associative and unital. Thus we have defined \emph{the category of $\cat{C}$-boxes and wiring diagrams}, which we denote $\cat{W}_{\cat{C}}$. This category has a symmetric monoidal structure $(\square,\boxplus)$, where $\square$ is the closed box and $\boxplus$ is given by sums of boxes and wiring diagrams, as in Definition~\ref{def:TFS}.

\end{definition}

\begin{remark}
A wiring diagram $\varphi\colon X\to Y$, includes two typed functions $\inp{\varphi},\outp{\varphi}$, which have as dependent product the functions $\vinp{\varphi},\voutp{\varphi}$ (see Definition~\ref{def:depprod}) as shown below: 
\begin{align*}
\inp{\varphi}&\colon\inp{X}\to\inp{Y}+\outp{X}
	&
\outp{\varphi}&\colon\outp{Y}\to\outp{X}
	\\
\vinp{\varphi}&\colon\vinp{Y}\times\voutp{X}\to\vinp{X}
	&
\voutp{\varphi}&\colon\voutp{X}\to\voutp{Y}
\end{align*}
\end{remark}

The proof of following lemma is a straightforward rewriting of Definition~\ref{def:comp_in_W}.
\begin{lemma}\label{lemma:technical}

Suppose given wiring diagrams $\varphi\colon X\to Y$ and $\psi\colon Y\to Z$. Then the dependent products $\vinp{(\psi\circ\varphi)}\colon\vinp{Z}\times\voutp{X}\to\vinp{X}$ and $\voutp{(\psi\circ\varphi)}\colon\voutp{X}\to\voutp{Z}$ are given by the formulas
\begin{align*}
\vinp{(\psi\circ\varphi)}(z,x)&=\vinp{\varphi}\Big(\vinp{\psi}\big(z,\voutp{\varphi}(x)\big),x\Big)\\
\voutp{(\psi\circ\varphi)}(x)&=\voutp{\psi}\Big(\voutp{\varphi}(x)\Big)
\end{align*}

\end{lemma}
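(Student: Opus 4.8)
The plan is to apply the dependent‑product functor $\dProd{\,\cdot\,}\colon\TFS_{\cat{C}}\op\to\cat{C}$ of Definition~\ref{def:depprod} to the two diagrams in $\TFS_{\cat{C}}$ that define $\psi\circ\varphi$ in Definition~\ref{def:comp_in_W}, and to read off the resulting diagrams in $\cat{C}$. Two facts make this routine: $\dProd{\,\cdot\,}$ is a contravariant functor, so it turns a composite of typed functions into the composite of the corresponding maps taken in the opposite order; and, by Lemma~\ref{lemma:dep_prod}, it carries the cocartesian structure of $\TFS_{\cat{C}}$ to the cartesian structure of $\cat{C}$. In particular a coproduct injection becomes a product projection, $\dProd{\gamma_1+\gamma_2}$ becomes $\dProd{\gamma_1}\times\dProd{\gamma_2}$ (modulo the coherence isomorphisms of Lemma~\ref{lemma:dep_prod}), and---the one translation worth pausing on---the fold map $\nabla_{\outp{X}}\colon\outp{X}+\outp{X}\to\outp{X}$ becomes the diagonal $\Delta\colon\voutp{X}\to\voutp{X}\times\voutp{X}$.

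The output component is immediate. The right‑hand triangle of Definition~\ref{def:comp_in_W} asserts $\outp{(\psi\circ\varphi)}=\outp{\varphi}\circ\outp{\psi}$; applying $\dProd{\,\cdot\,}$ and using contravariance gives $\voutp{(\psi\circ\varphi)}=\dProd{\outp{\psi}}\circ\dProd{\outp{\varphi}}=\voutp{\psi}\circ\voutp{\varphi}$, which on an element $x$ means $\voutp{\varphi}$ first and then $\voutp{\psi}$, exactly the claimed formula $\voutp{(\psi\circ\varphi)}(x)=\voutp{\psi}\big(\voutp{\varphi}(x)\big)$.

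For the input component I would unwind the left‑hand square of Definition~\ref{def:comp_in_W}, which states that $\inp{(\psi\circ\varphi)}$ is the composite $\inp{X}\to\inp{Y}+\outp{X}\to\inp{Z}+\outp{Y}+\outp{X}\to\inp{Z}+\outp{X}+\outp{X}\to\inp{Z}+\outp{X}$, with the four legs $\inp{\varphi}$, then $\inp{\psi}+\outp{X}$, then $\inp{Z}+\outp{\varphi}+\outp{X}$, then $\inp{Z}+\nabla_{\outp{X}}$. Applying $\dProd{\,\cdot\,}$ reverses the order and produces four maps in $\cat{C}$: from $\inp{Z}+\nabla_{\outp{X}}$ the map $\id\times\Delta\colon\vinp{Z}\times\voutp{X}\to\vinp{Z}\times\voutp{X}\times\voutp{X}$; from $\inp{Z}+\outp{\varphi}+\outp{X}$ the map $\id\times\voutp{\varphi}\times\id$; from $\inp{\psi}+\outp{X}$ the map $\vinp{\psi}\times\id$; and from $\inp{\varphi}$ the map $\vinp{\varphi}$. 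Composing these in that order on $(z,x)\in\vinp{Z}\times\voutp{X}$ gives successively $(z,x,x)$, then $(z,\voutp{\varphi}(x),x)$, then $(\vinp{\psi}(z,\voutp{\varphi}(x)),x)$, and finally $\vinp{\varphi}\big(\vinp{\psi}(z,\voutp{\varphi}(x)),x\big)$, which is the asserted formula for $\vinp{(\psi\circ\varphi)}$.

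The only real obstacle is bookkeeping: one must keep the contravariance straight (composites reverse, injections become projections), insert the coherence isomorphisms of Lemma~\ref{lemma:dep_prod} without letting them clutter the argument, and recognise that applying $\dProd{\,\cdot\,}$ to a fold map yields a diagonal. Once these translations are in place there is no genuine computation left---only matching the two resulting diagrams against the statement---which is why the lemma is a straightforward rewriting of Definition~\ref{def:comp_in_W}.
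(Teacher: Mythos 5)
Your proof is correct and is precisely the ``straightforward rewriting of Definition~\ref{def:comp_in_W}'' that the paper invokes without spelling out: apply the contravariant functor $\dProd{\;\cdot\;}$ to the two defining diagrams, use Lemma~\ref{lemma:dep_prod} to turn coproducts into products (and the fold map into the diagonal), and chase an element. The element computation $(z,x)\mapsto(z,x,x)\mapsto(z,\voutp{\varphi}(x),x)\mapsto(\vinp{\psi}(z,\voutp{\varphi}(x)),x)\mapsto\vinp{\varphi}(\vinp{\psi}(z,\voutp{\varphi}(x)),x)$ matches the claimed formulas exactly.
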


\chapter{Five formal interpretations of wiring diagrams}\label{sec:formal_specs}

In this final section, we give precise formulas for putting together subsystems according to an arbitrary wiring diagram, to form a larger system. These systems may be dynamical systems of various kinds (discrete, measurable, linear, continuous) or they may be matrices; we call these our five \emph{interpretations} of wiring diagrams. Two of them, namely linear and continuous systems, are taken from \cite{VSL}. Another, namely discrete systems, is loosely adapted from \cite{RupelSpivak}. Technically, each interpretation is a lax $\Set$-valued functor on $\cat{W}$, the symmetric monoidal category of wiring diagrams as defined in Section~\ref{sec:CT_formulation}. Experts may note that algebras on the operad of wiring diagrams appear related to traced monoidal categories, and indeed they are; see \cite{JSTraced} and \cite{SpivakSchultzRupel}.

We spell out how each interpretation works in several steps. In Section~\ref{sec:implementing_box}, we remind the reader what sort of thing is allowed to fill, or \emph{inhabit}, a given box shape, in each of our five interpretations. In Section~\ref{sec:ODS}, we explain what happens when boxes are put into parallel. In fact, whenever a wiring diagram includes several boxes, the default technique is that of Example~\ref{ex:WD_explicit}: First we put them in parallel to form one box, and then we use a wiring diagram that has only that one inner box (see Definition~\ref{def:wiring_diagram}). Thus we complete our description of our five interpretations in Section~\ref{sec:implenting_wiring} by saying what happens on wiring diagrams (with one inner box).%
\footnote{Note that in practice, this default "tensor then wire" technique is almost never the most efficient. For example, multiplying two matrices can be obtained by tensoring them and tracing the result, but this requires an order of magnitude more operations than the usual matrix product formula. In the extended example, Section~\ref{sec:extended_example}, we show an alternative technique. The proofs in the present section imply that all techniques will give the same final answer.}

In Section~\ref{sec:compositional_mappings} we give some compositional maps between interpretations. Most of these have been briefly discussed earlier in the paper, but we make formal theorems here. We conclude in Section~\ref{sec:extended_example} with an extended example.

\section{Inhabitants of a box}\label{sec:implementing_box}

Definitions~\ref{def:DS_box},~\ref{def:MS_box},~\ref{def:LS_box},~\ref{def:CS_box},~and~\ref{def:Mat_box} say precisely the set of inhabitants that are allowed to fill each box $X\in\cat{W}$ (e.g., \eqref{eqn:arb_box}), according to our five interpretations: discrete systems, measurable systems, linear systems, continuous systems, and matrices. In this section, we are simply gathering together Definitions from Section~\ref{sec:ODS_and_matrices}. 

\begin{definition}\label{def:DS_box}

Let $\cat{C}=\Set$, and let $X=(\inp{X},\outp{X})\in\cat{W}_{\Set}$ be a $\Set$-box. Define $\DS(X)\coloneqq\DS(\dProd{X})$ to be the set of $(\vinp{X},\voutp{X})$-discrete systems, as in Definition~\ref{def:DDS}. That is, 
\[\DS(X)\coloneqq\left\{(S,\rdt{f},\upd{f})\;\;\middle|\;\; S\in\Set,\quad \rdt{f}\in\Set\left(S,\voutp{X}\right),\quad \upd{f}\in\Set\left(\vinp{X}\times S, S\right)\right\}\]

\end{definition}

\begin{definition}\label{def:MS_box}

Let $\cat{C}=\Meas$, and let $X=(\inp{X},\outp{X})\in\cat{W}_{\Meas}$ be a $\Meas$-box. Define $\MS(X)\coloneqq\MS(\dProd{X})$ to be the set of $(\vinp{X},\voutp{X})$-measurable systems, as in Definition~\ref{def:MDS}. That is, 
\[
\DS(X)\coloneqq
\left\{(S,\mu,\rdt{f},\upd{f})\;\;\middle|\;\;
\parbox{3.1in}{$
	S\in\Meas,\quad \mu\tn{ is a measure on }S,\\ 
	\rdt{f}\in\Set\left(S,\voutp{X}\right),\quad \upd{f}\in\Set\left(\vinp{X}\times S, S\right)
$}
\right\}
\]

\end{definition}

\begin{definition}\label{def:LS_box}

Let $\cat{C}=\Lin$, and let $X=(\inp{X},\outp{X})\in\cat{W}_\Lin$ be a $\Lin$-box.
Define $\LS(X)\coloneqq\LS(\dProd{X})$ to be the set of $(\vinp{X},\voutp{X})$-linear systems, as in Definition~\ref{def:LDS}. That is, 
\[
\LS(X)\coloneqq
\left\{(S,\inp{M},\midp{M},\outp{M})\;\;\middle|\;\; 
S\in\Lin,\;
\begin{array}{ll}
\midp{M}\in\Lin(S,S)&\inp{M}\in\Lin\left(\vinp{X},S\right)\\
\outp{M}\in\Lin\left(S,\voutp{X}\right)
\end{array}
\right\}
\]

\end{definition}

\begin{definition}\label{def:CS_box}

Let $\cat{C}=\Set$, and let $X=(\inp{X},\outp{X})\in\cat{W}_{\Euc}$ be a $\Euc$-box. 
Define $\CS(X)\coloneqq\CS(\dProd{X})$ to be the set of $(\vinp{X},\voutp{X})$-continuous systems, as in Definition~\ref{def:CDS}. That is, 
\[\CS(X)\coloneqq\left\{(S,\rdt{f},\dyn{f})\;\;\middle|\;\; S\in\Euc,\quad \rdt{f}\in\Euc\left(S,\voutp{X}\right),\quad \upd{f}\in\Euc_{/S}\left(\vinp{X}\times S, TS\right)\right\}\]

\end{definition}

Recall that if $S\in\cat{C}$ is an object, then $\cat{C}_{/S}$ denotes the slice category of $\cat{C}$ over $S$. We will not need this again; it was used in Definition~\ref{def:CS_box} simply as  shorthand for the diagram \eqref{eqn:slice_Euc}. 

\begin{definition}\label{def:Mat_box}

Let $\cat{C}=\Set$, let $R$ be a complete semiring, and let $X=(\inp{X},\outp{X})\in\cat{W}_{\Set}$ be a $\Set$-box. Define $\Mat_R(X)\coloneqq\Mat(\dProd{X})$ to be the set of $(\vinp{X}\times\voutp{X})$-matrices in $R$. This can be identified with the set of functions 
\[\Mat(X)\cong\left\{M\colon\vinp{X}\times\voutp{X}\to R\right\}.\]

\end{definition}

\section{Parallelizing inhabitants}\label{sec:ODS}

In this section we explain how parallel composition works for each of our five interpretations, discrete systems, measurable systems, linear systems, continuous systems, and matrices. One may refer to Example~\ref{ex:TFS} and Definition~\ref{def:box}.

\begin{definition}\label{def:DDS_parallel}

Suppose we are given discrete systems $F_1=(S_1,\rdt{f_1},\upd{f_1})\in\DS(X_1)$ and $F_2=(S_2,\rdt{f_2},\upd{f_2})\in\DS(X_2)$. Their \emph{parallel composition}, denoted by $F_1\boxtimes F_2=(T,\rdt{g},\upd{g})\in\DS(X_1\boxplus X_2)$ is given as follows. Its state set is the product $T\coloneqq S_1\times S_2$ in $\Set$, its readout function $\rdt{g}=\rdt{(f_1\boxtimes f_2)}$ is the product 
\[
\rdt{(f_1\boxtimes f_2)}\coloneqq\rdt{f_1}\times\rdt{f_2}\colon S_1\times S_2\to B_1\times B_2,
\]
and its update function $\upd{g}=\upd{(f_1\boxtimes f_2)}$ is the product $\upd{f_1}\times\upd{f_2}$ as shown here:
\[
\begin{tikzcd}[column sep=5em]
A_1\times A_2\times S_1\times S_2\ar[d,"\cong"'] \ar[r,dashed,"\upd{(f_1\boxtimes f_2)}"]
&
S_1\times S_2
\\
A_1\times S_1\times A_2\times S_2\ar[r,"{\upd{f_1}\times\upd{f_2}}"']
&
S_1\times S_2\ar[u,equal]
\end{tikzcd}
\]

\end{definition}

\begin{remark}\label{rem:MDS_parallel}

Definition~\ref{def:DDS_parallel} also makes sense when $F_1$ and $F_2$ are assumed to be measurable systems, i.e., we can form a measurable system $F_1\boxtimes F_2$, called their \emph{parallel composition}, in the identical way. In particular, the set $S_1\times S_2$ is given the product measure $\mu_1\otimes\mu_2$ (see \cite{Fremlin_Measure_Theory}).

\end{remark}

\begin{definition}\label{def:LDS_parallel}

Suppose we are given linear systems $M_1=(S_1,\inp{M_1},\midp{M_1},\outp{M_1})$ and $M_2=(S_2,\inp{M_2},\midp{M_2},\outp{M_2})$. Their \emph{parallel composition}, denoted $M_1\oplus M_2$ is simply given by direct sums of the respective linear maps:
\[
\inp{(M_1\oplus M_2)}\coloneqq\inp{M_1}\oplus\inp{M_2}\qquad
\midp{(M_1\oplus M_2)}\coloneqq\midp{M_1}\oplus\midp{M_2}\qquad
\outp{(M_1\oplus M_2)}\coloneqq\outp{M_1}\oplus\outp{M_2}
\]

\end{definition}

\begin{definition}\label{def:CDS_parallel}

Suppose we are given continuous systems $F_1=(S_1,\rdt{f_1},\dyn{f_1})\in\CS(X_1)$ and $F_2=(S_2,\rdt{f_2},\dyn{f_2})\in\CS(X_2)$. Their \emph{parallel composition}, denoted by $F_1\boxtimes F_2=(T,\rdt{g},\dyn{g})\in\CS(X_1\boxplus X_2)$ is given as follows. Its state set is the product $T\coloneqq S_1\times S_2$ in $\Euc$, its readout function $\rdt{g}=\rdt{(f_1\boxtimes f_2)}$ is the product 
\[
\rdt{(f_1\boxtimes f_2)}\coloneqq\rdt{f_1}\times\rdt{f_2}\colon S_1\times S_2\to B_1\times B_2,
\]
and its update function $\upd{g}=\upd{(f_1\boxtimes f_2)}$ is, up to isomorphism, the product $\upd{f_1}\times\upd{f_2}$ as shown here:
\[
\begin{tikzcd}[column sep=5em]
A_1\times A_2\times S_1\times S_2\ar[d,"\cong"'] \ar[r,dashed,"\dyn{(f_1\boxtimes f_2)}"]
&
T(S_1\times S_2)
\\
A_1\times S_1\times A_2\times S_2\ar[r,"{\dyn{f_1}\times\dyn{f_2}}"']
&
TS_1\times TS_2\ar[u,"\cong"']
\end{tikzcd}
\]

\end{definition}

\begin{definition}\label{def:Mat_parallel}

Let $R$ be a semiring. Suppose we are given $R$-matrices $M^1\in\Mat_R(X_1)$ and $M^2\in\Mat_R(X_2)$. Their \emph{parallel composition}, denoted by $M^1\otimes M^2\in\Mat_R(X_1\boxplus X_2)$ is given as the Kronecker product, given by component-wise product (in $R$):
\begin{equation}\label{eqn:matrix_tensor}
(M^1\otimes M^2)_{(i_1,i_2),(j_1,j_2)}\coloneqq M^1_{i_1,j_1}\cdot M^2_{i_2,j_2}
\end{equation}

\end{definition}

\section{Wiring together inhabitants}\label{sec:implenting_wiring}

Any complex wiring diagram $\varphi\colon X_1,\ldots,X_n\to Y$, such as the one shown in \eqref{fig:complex_WD}, can be constructed by first putting the input boxes in parallel $X=X_1\boxplus\cdots\boxplus X_n$ as in Definition~\ref{def:box}, and then using a wiring diagram $X\to Y$ with a single inner box (see Example~\ref{ex:WD_explicit}). For each of our five interpretations (dynamical systems and matrices), the formula for putting together inhabitants of $X_1,\ldots,X_n$ to form an inhabitant of $Y$ is likewise done in these two steps. Parallelizing inhabitants was discussed in Section~\ref{sec:ODS} and how a single inhabitant, wired into a larger box, produces an inhabitant of that larger box, is described in this section.

We not only give the formula, we also prove Theorems~\ref{thm:DS_sym_mon_func},~\ref{thm:MS_sym_mon_func},~\ref{thm:LS_sym_mon_func},~\ref{thm:CS_sym_mon_func},~and~\ref{thm:Mat_sym_mon_func}, which say that these formulas are coherent for each of our five interpretations. More formally, we prove they constitute lax monoidal functors.

\subsection{Discrete systems}

\begin{definition}\label{def:DS_wiring}

Let $\varphi\colon X\to Y$ be a wiring diagram in $\cat{W}_{\Set}$, and suppose that $F=(S,\rdt{f},\upd{f})\in\DS(X)$ is an $\dProd{X}$-discrete system. We define \emph{the $\DS$-application of $\varphi$ to $F$}, denoted $\DS(\varphi)(F)\in\DS(Y)$, to be the $\dProd{Y}$-discrete system $\DS(\varphi)(F)=(T,\rdt{g},\upd{g})$ where
\begin{equation}\label{eqn:DS_formulas}
T\coloneqq S,\quad \rdt{g}(s)\coloneqq \voutp{\varphi}\left(\rdt{f}(s)\right),\quad\upd{g}(y,s)\coloneqq \upd{f}\left(\vinp{\varphi}\left(y,\rdt{f}(s)\right),s\right)
\end{equation}

\end{definition}

\begin{theorem}\label{thm:DS_sym_mon_func}

The assignments $X\mapsto\DS(X)$ and $\varphi\mapsto\DS(\varphi)$, with parallel composition as in Definition~\ref{def:DDS_parallel} constitute a symmetric monoidal functor $\DS\colon\cat{W}_\Set\to\Set$.

\end{theorem}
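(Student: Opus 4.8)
The plan is to verify, in order: first that $\DS$ is well defined on morphisms; then that it preserves identities and composition, so that $\DS\colon\cat{W}_\Set\to\Set$ is a functor; and finally that the parallel-composition operation of Definition~\ref{def:DDS_parallel} equips this functor with lax symmetric monoidal structure maps satisfying the coherence and symmetry axioms. (The word "monoidal" here is understood laxly, consistent with the framing of Section~\ref{sec:formal_specs}: the structure maps will not be isomorphisms.)

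For well-definedness, given $\varphi\colon X\to Y$ and $F=(S,\rdt f,\upd f)\in\DS(X)$, the triple $\DS(\varphi)(F)=(T,\rdt g,\upd g)$ of \eqref{eqn:DS_formulas} has $T=S\in\Set$, a function $\rdt g\colon S\to\voutp Y$, and a function $\upd g\colon\vinp Y\times S\to S$, which is exactly the data of a $\dProd Y$-discrete system --- so there is nothing to check. For preservation of identities, substitute $\varphi=\id_X$ into \eqref{eqn:DS_formulas}: by Definition~\ref{def:wiring_diagram} together with Lemma~\ref{lemma:dep_prod}, $\voutp{(\id_X)}=\id_{\voutp X}$ and $\vinp{(\id_X)}\colon\vinp X\times\voutp X\to\vinp X$ is the first projection, so $\rdt g=\rdt f$ and $\upd g(y,s)=\upd f(y,s)$, giving $\DS(\id_X)(F)=F$.

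The crux is preservation of composition. Given $\varphi\colon X\to Y$ and $\psi\colon Y\to Z$, write $\DS(\varphi)(F)=(S,\rdt g,\upd g)$ with $\rdt g(s)=\voutp\varphi(\rdt f(s))$ and $\upd g(y,s)=\upd f(\vinp\varphi(y,\rdt f(s)),s)$, then expand $\DS(\psi)\big(\DS(\varphi)(F)\big)=(S,\rdt h,\upd h)$ using \eqref{eqn:DS_formulas} a second time; one gets $\rdt h(s)=\voutp\psi\big(\voutp\varphi(\rdt f(s))\big)$ and
\[
\upd h(z,s)=\upd f\Big(\vinp\varphi\big(\vinp\psi(z,\voutp\varphi(\rdt f(s))),\,\rdt f(s)\big),\,s\Big).
\]
Comparing with $\DS(\psi\circ\varphi)(F)$, computed from \eqref{eqn:DS_formulas} and the formulas of Lemma~\ref{lemma:technical} for $\vinp{(\psi\circ\varphi)}$ and $\voutp{(\psi\circ\varphi)}$, the two triples agree term for term (the state sets are literally equal, both $S$). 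This is a direct substitution, and it is the only place the composition law of $\cat{W}$ (Definition~\ref{def:comp_in_W}, via Lemma~\ref{lemma:technical}) enters.

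For the monoidal structure, the structure maps are $\mu_{X_1,X_2}\colon\DS(X_1)\times\DS(X_2)\to\DS(X_1\boxplus X_2)$, $(F_1,F_2)\mapsto F_1\boxtimes F_2$ from Definition~\ref{def:DDS_parallel}, together with the unit $1\to\DS(\square)$ picking out the one-state system on $S=1$. Naturality of $\mu$ --- that $\DS(\varphi_1\boxplus\varphi_2)(F_1\boxtimes F_2)=\DS(\varphi_1)(F_1)\boxtimes\DS(\varphi_2)(F_2)$ for $\varphi_i\colon X_i\to Y_i$ --- follows by unwinding \eqref{eqn:DS_formulas} on both sides and using Lemma~\ref{lemma:dep_prod} to identify $\vinp{(\varphi_1\boxplus\varphi_2)}$ with $\vinp{\varphi_1}\times\vinp{\varphi_2}$ (and likewise for the output functions). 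The associativity, unitality, and symmetry coherence diagrams then reduce to the corresponding coherence isomorphisms for the cartesian product on $\Set$ and for $\boxplus$ on $\cat{W}_\Set$; the only thing to observe is that the middle-four-interchange isomorphism $A_1\times A_2\times S_1\times S_2\cong A_1\times S_1\times A_2\times S_2$ appearing in Definition~\ref{def:DDS_parallel} is the canonical one, so these diagrams commute on the nose.

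I expect the main obstacle to be bookkeeping rather than ideas: in the composition step one must keep the nested applications of $\vinp\varphi$, $\vinp\psi$, $\voutp\varphi$ straight against Lemma~\ref{lemma:technical}, and in the naturality square for $\mu$ one must carefully match the shuffle isomorphism of Definition~\ref{def:DDS_parallel} with the one implicit in $\boxplus$. It is also worth recording explicitly that $\mu_{X_1,X_2}$ is not surjective --- a discrete system on $X_1\boxplus X_2$ need not have a product state set --- so $\DS$ is genuinely lax, and not strong, monoidal.
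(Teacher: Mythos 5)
Your proposal is correct and follows essentially the same route as the paper's proof: preservation of composition is verified by expanding \eqref{eqn:DS_formulas} twice and invoking Lemma~\ref{lemma:technical}, and monoidality is reduced to Lemma~\ref{lemma:dep_prod}. You are somewhat more explicit than the paper (checking identities, spelling out the unit and the structure maps, and noting that the functor is lax rather than strong), but these are elaborations of the same argument, not a different one.
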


\begin{proof}

We need to check that for any $X\To{\varphi}Y\To{\psi}Z$ and discrete system $F=(S,\rdt{f},\upd{f})\in\DS(X)$, the following equation holds:
\[\DS(\psi)\big(\DS(\varphi)(F)\big)=\DS(\psi\circ\varphi)(F).\]
For ease of notation, let $G=(T,\rdt{g},\upd{g})\coloneqq\DS(\varphi)(F)$, let $H_1=(U_1,\rdt{h_1},\upd{h_1})\coloneqq\DS(\psi)(G)$, and let $H_2=(U_2,\rdt{h_2},\upd{h_2})\coloneqq\DS(\psi\circ\varphi)(F)$. We want to show that $H_1=H_2$. 

It is easy to see that they have the same state set, $U_1=U_2=S$, and the same readout function $\rdt{h_1}=\rdt{h_2}=\voutp{\psi}\circ\voutp{\varphi}\circ\rdt{f}$. We compute the update functions and see they are the same for any $z\in\vinp{Z}$ and $s\in S$:
\begin{align*}
	\upd{h_1}(z,s)&=
	\upd{g}\Big(\vinp{\psi}(z,\rdt{g}(s)\big),s\Big)\\&=
	\upd{f}\bigg(\vinp{\varphi}\Big(\vinp{\psi}\big(z,\rdt{g}(s)\big),\rdt{f}(s)\Big),s\bigg)\\&=
	\upd{f}\Bigg(\vinp{\varphi}\bigg(\vinp{\psi}\Big(z,\voutp{\varphi}\big(\rdt{f}(s)\big)\Big),\rdt{f}(s)\bigg),s\Bigg)\\&=
	\upd{f}\Big(\vinp{(\psi\circ\varphi)}\big(z,\rdt{f}(s)\big),s\Big)\\&=
	\upd{h_2}(z,s)
\end{align*}
where the penultimate equality is an application of Lemma~\ref{lemma:technical}, and the rest are merely untangling \eqref{eqn:DS_formulas}.

We also need to check that $\DS$ is symmetric monoidal. This is straightforward; it follows from the fact that taking dependent products is itself symmetric monoidal, sending coproducts to products, as in Lemma~\ref{lemma:dep_prod}.
\end{proof}

\subsection{Measurable systems}

\begin{definition}\label{def:MS_wiring}

Let $\varphi\colon X\to Y$ be a wiring diagram in $\cat{W}_{\Meas}$, and suppose that $F=(S,\mu,\rdt{f},\upd{f})\in\MS(X)$ is an $\dProd{X}$-measurable system. We define \emph{the $\MS$-application of $\varphi$ to $F$}, denoted $\MS(\varphi)(F)\in\DS(Y)$, to be the $\dProd{Y}$-measurable system $\MS(\varphi)(F)=(T,\nu,\rdt{g},\upd{g})$ where
\begin{equation}\label{eqn:MS_formulas}
(T,\nu)\coloneqq (S,\mu),\quad \rdt{g}(s)\coloneqq \voutp{\varphi}\left(\rdt{f}(s)\right),\quad\upd{g}(y,s)\coloneqq \upd{f}\left(\vinp{\varphi}\left(y,\rdt{f}(s)\right),s\right)
\end{equation}

\end{definition}

\begin{theorem}\label{thm:MS_sym_mon_func}

The assignments $X\mapsto\MS(X)$ and $\varphi\mapsto\MS(\varphi)$, with parallel composition as in Remark~\ref{rem:MDS_parallel} constitute a symmetric monoidal functor $\MS\colon\cat{W}_{\Meas}\to\Set$.

\end{theorem}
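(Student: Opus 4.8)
The plan is to reduce the statement to the discrete case, Theorem~\ref{thm:DS_sym_mon_func}, and then add the modest amount of measure theory that distinguishes $\MS$ from $\DS$. The key observation is that the formulas \eqref{eqn:MS_formulas} defining $\MS(\varphi)(F)$ are \emph{verbatim} the formulas \eqref{eqn:DS_formulas} applied to the underlying discrete system $(S,\rdt{f},\upd{f})$, with the measure carried along untouched, $\nu\coloneqq\mu$. So the first thing to verify is that $\MS(\varphi)(F)$ really is a $\dProd{Y}$-measurable system, i.e., that it does not fall out of $\Meas$: the state space $(S,\mu)$ is unchanged, hence still a countably-separated measure space; and $\rdt{g}=\voutp{\varphi}\circ\rdt{f}$ and the map $\upd{g}\colon\vinp{Y}\times S\to S$, $(y,s)\mapsto\upd{f}\big(\vinp{\varphi}(y,\rdt{f}(s)),s\big)$, are composites and pairings of measurable maps. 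Here one uses that the dependent-product maps $\vinp{\varphi}$ and $\voutp{\varphi}$ are measurable (they are assembled from projections, diagonals, and permutations of finite products in $\Meas$, which exist by Proposition~\ref{prop:csms}(2)), together with the universal property of those products.

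Functoriality is then immediate. For $X\To{\varphi}Y\To{\psi}Z$ and $F=(S,\mu,\rdt{f},\upd{f})\in\MS(X)$, the identity $\MS(\psi)\big(\MS(\varphi)(F)\big)=\MS(\psi\circ\varphi)(F)$ agrees with the corresponding $\DS$-identity on the state space, readout, and update, so that calculation --- whose crux is Lemma~\ref{lemma:technical} --- carries over word for word from the proof of Theorem~\ref{thm:DS_sym_mon_func}; and both sides carry the same measure, namely $\mu$. Preservation of identities likewise follows from the discrete case together with $\nu=\mu$.

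It remains to produce the symmetric monoidal structure. The comparison maps are the parallel-composition functions $\MS(X_1)\times\MS(X_2)\to\MS(X_1\boxplus X_2)$ of Remark~\ref{rem:MDS_parallel}, sending $(F_1,F_2)$ to the system with state space $\big(S_1\times S_2,\ \mu_1\otimes\mu_2\big)$, readout $\rdt{f_1}\times\rdt{f_2}$, and (up to the canonical reshuffle of factors) update $\upd{f_1}\times\upd{f_2}$, together with the one-point system on $\square$. That $S_1\times S_2$ is again countably-separated is Proposition~\ref{prop:csms}(2), and that the product measure $\mu_1\otimes\mu_2$ is defined on such spaces is the point recalled in Remark~\ref{rem:MDS_parallel} (see \cite{Fremlin_Measure_Theory}). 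Naturality in $\varphi_1,\varphi_2$ and the associativity, unitality, and symmetry coherences then each split into two independent halves: the underlying-discrete-system half, handled exactly as in Theorem~\ref{thm:DS_sym_mon_func} via Lemma~\ref{lemma:dep_prod} (dependent products turn sums of typed finite sets into products); and the measure half, which is the statement that products of countably-separated measure spaces are associative, unital against the point, and symmetric up to the canonical isomorphisms.

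The only place where genuine (rather than bookkeeping) content appears is this last point: that $\mu_1\otimes\mu_2$ exists and that the associativity and symmetry isomorphisms of products in $\Meas$ actually transport one iterated product measure to another. This is exactly what the restriction to countably-separated spaces buys us, so modulo quoting the relevant statements from \cite{Fremlin_Measure_Theory} no new work is required; everything else is an almost mechanical transcription of the proof of Theorem~\ref{thm:DS_sym_mon_func}.
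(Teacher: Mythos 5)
Your proof is correct and follows essentially the same route as the paper's: reduce functoriality to Theorem~\ref{thm:DS_sym_mon_func} (the paper phrases this via faithfulness of the underlying-set functor $U\colon\Meas\to\Set$, you phrase it as the computation carrying over verbatim with the measure along for the ride), and handle the monoidal structure separately via closure of $\Meas$ under products and the product measure of \cite{Fremlin_Measure_Theory}. Your explicit check that $\MS(\varphi)(F)$ lands back in $\Meas$ --- because $\vinp{\varphi}$ and $\voutp{\varphi}$ are built from projections and diagonals and hence measurable --- is a well-definedness point the paper leaves implicit, and is a welcome addition rather than a deviation.
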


\begin{proof}

The underlying set functor $U\colon\Meas\to\Set$ is faithful; that is, for any measurable functions $f,g\colon X\to Y$, if they agree on underlying sets, $U(f)=U(g)$ then they are equal $f=g$. Suppose $F=(S,\rdt{f},\upd{f})\in\MS(X)$. Checking that the equation
\[\MS(\psi)\big(\MS(\varphi)(F)\big)=\MS(\psi\circ\varphi)(F)\]
holds is a matter of checking that both sides have the same state space (they do: both are $S$) and the same readout and update functions. Thus the functoriality follows from Theorem~\ref{thm:DS_sym_mon_func} by the faithfulness of $U$.

To see that $\MS$ is monoidal, notice that 
\[\MS(X)=\bigsqcup_{(S,\rdt{f},\upd{f})\in\DS(X)}\{\mu\mid\mu\tn{ is a measure on }S\}.\]
Consider the functor $\Meas\to\Set$ given by assigning the set of measures to a measurable space. It is monoidal, using the product measure construction \cite{Fremlin_Measure_Theory}, and the result follows.
\end{proof}

\subsection{Linear systems}

To define how wiring diagrams act on linear systems, we must first define the derivative of a wiring diagram.

\begin{definition}\label{def:derivative_wd}

Let $\varphi\colon X\to Y$ be a wiring diagram in $\cat{W}_{\Lin}$. We will define its \emph{derivative} to be three linear functions $\inp{\Phi}\in\Lin(\vinp{Y},\vinp{X})$,$\midp{\Phi}\in\Lin(\voutp{X},\vinp{X})$, and $\outp{\Phi}\in\Lin(\voutp{X},\voutp{Y})$ by taking derivatives, as follows. 

Recall that the dependent product $\dProd{\varphi}$ consists of two parts $\vinp{\varphi}\colon\voutp{X}\times\vinp{Y}\to\vinp{X}$ and $\voutp{\varphi}\colon\voutp{X}\to\voutp{Y}$. We define the three matrices as the following derivatives:
\begin{equation}\label{eqn:derivatives_wd}
\inp{\Phi}\coloneqq \partial_{\inp{Y}}\vinp{\varphi},\qquad
\midp{\Phi}\coloneqq \partial_{\outp{X}}\vinp{\varphi},\qquad
\outp{\Phi}\coloneqq \partial_{\outp{X}}\voutp{\varphi}.
\end{equation}
Note that because each of $\inp{\Phi}$, $\midp{\Phi}$, and $\outp{\Phi}$ is in fact a matrix of 1's and 0's because $\vinp{\varphi}$ and $\voutp{\varphi}$ are dependent products (see Definition~\ref{def:depprod}), meaning that they are made up simply of projections and diagonal maps.

\end{definition}

\begin{lemma}\label{lemma:derivative_comp_wd}
Suppose that $\varphi\colon X\to Y$ and $\psi\colon Y\to Z$ are wiring diagrams with derivatives $\Phi=(\inp{\Phi},\midp{\Phi},\outp{\Phi})$ and $\Psi=(\inp{\Psi},\midp{\Psi},\outp{\Psi})$ as in Definition~\ref{def:derivative_wd}. The the composite wiring diagram $\omega=\psi\circ\varphi\colon X\to Z$, as in Definition~\ref{def:comp_in_W} has derivatives 
\[
\inp{\Omega}=\inp{\Phi}\inp{\Psi},\qquad
\midp{\Omega}=\midp{\Phi}+\inp{\Phi}\midp{\Psi}\outp{\Phi},\qquad
\outp{\Omega}=\outp{\Psi}\outp{\Phi}.
\]

\end{lemma}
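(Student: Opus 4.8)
The plan is to reduce the statement to Lemma~\ref{lemma:technical}, which already records how the dependent products $\vinp{(\psi\circ\varphi)}$ and $\voutp{(\psi\circ\varphi)}$ of a composite wiring diagram are built from those of $\varphi$ and $\psi$. Since the three derivative matrices of a wiring diagram are defined in \eqref{eqn:derivatives_wd} as partial derivatives of $\vinp{\varphi}$ and $\voutp{\varphi}$, the entire computation should amount to differentiating the two equations of Lemma~\ref{lemma:technical} and invoking the chain rule. Because all the maps $\vinp{\varphi},\voutp{\varphi}$ (and likewise for $\psi$) are dependent products, hence affine (indeed linear, being composites of projections and diagonals), their derivatives are constant matrices of $0$'s and $1$'s, so there are no subtleties about the point at which derivatives are taken.

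First I would handle the output derivative, which is immediate: Lemma~\ref{lemma:technical} gives $\voutp{(\psi\circ\varphi)}(x)=\voutp{\psi}\big(\voutp{\varphi}(x)\big)$, so differentiating with respect to $\outp{X}$ and applying the chain rule yields $\outp{\Omega}=\partial_{\outp{X}}\voutp{(\psi\circ\varphi)}=\big(\partial_{\outp{Y}}\voutp{\psi}\big)\big(\partial_{\outp{X}}\voutp{\varphi}\big)=\outp{\Psi}\,\outp{\Phi}$, matching the claim. Next I would do the input derivative: from $\vinp{(\psi\circ\varphi)}(z,x)=\vinp{\varphi}\Big(\vinp{\psi}\big(z,\voutp{\varphi}(x)\big),x\Big)$, differentiating with respect to $z\in\vinp{Z}$ (holding $x$ fixed) and applying the chain rule, only the first argument of $\vinp{\varphi}$ depends on $z$, so $\inp{\Omega}=\partial_{\inp{Z}}\vinp{(\psi\circ\varphi)}=\big(\partial_{\inp{Y}}\vinp{\varphi}\big)\big(\partial_{\inp{Z}}\vinp{\psi}\big)=\inp{\Phi}\,\inp{\Psi}$. (I should be careful about the order of matrix multiplication here, since $\inp{\Phi}\in\Lin(\vinp{Y},\vinp{X})$ and $\inp{\Psi}\in\Lin(\vinp{Z},\vinp{Y})$, so the composite $\inp{\Phi}\inp{\Psi}\colon\vinp{Z}\to\vinp{X}$ has the right type, consistent with the paper's matrix-as-left-action convention.)

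The one step requiring genuine care—and the main obstacle—is the middle derivative $\midp{\Omega}=\partial_{\outp{X}}\vinp{(\psi\circ\varphi)}$, because $x$ appears in \emph{two} places in $\vinp{(\psi\circ\varphi)}(z,x)=\vinp{\varphi}\Big(\vinp{\psi}\big(z,\voutp{\varphi}(x)\big),x\Big)$: once directly as the second argument of $\vinp{\varphi}$, and once through $\voutp{\varphi}(x)$ feeding into $\vinp{\psi}$. Differentiating by the chain rule for a sum of two dependency paths gives two terms: the direct one contributes $\partial_{\outp{X}}\vinp{\varphi}=\midp{\Phi}$, and the indirect one contributes $\big(\partial_{\inp{Y}}\vinp{\varphi}\big)\big(\partial_{\outp{Y}}\vinp{\psi}\big)\big(\partial_{\outp{X}}\voutp{\varphi}\big)=\inp{\Phi}\,\midp{\Psi}\,\outp{\Phi}$. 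Summing, $\midp{\Omega}=\midp{\Phi}+\inp{\Phi}\,\midp{\Psi}\,\outp{\Phi}$, exactly as asserted. I would close by remarking that since all the functions involved are linear, "derivative" here can equivalently be read as "the linear map itself," so the chain-rule manipulations above are literally equalities of linear maps and no approximation or evaluation-point bookkeeping is needed; alternatively one can verify the three identities directly on basis vectors (standard basis elements, i.e. individual ports) using the explicit formula \eqref{eqn:dprod_formula} for dependent products, which is perhaps the cleanest way to make the argument fully rigorous.
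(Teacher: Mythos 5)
Your proposal is correct and follows exactly the route the paper takes: its proof consists of the single sentence that the lemma ``is a chain rule computation, taking derivatives of the formulas in Lemma~\ref{lemma:technical}.'' You have simply carried out that computation explicitly, correctly identifying the two dependency paths through $x$ that produce the two terms of $\midp{\Omega}$ and getting the types and multiplication order of all three composites right.
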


\begin{proof}

This is a chain rule computation, taking derivatives of the formulas in Lemma~\ref{lemma:technical}.\end{proof}

\begin{definition}\label{def:LS_wiring}

Let $\varphi\colon X\to Y$ be a wiring diagram in $\cat{W}_{\Lin}$ and let $(\inp{\Phi}, \midp{\Phi}, \outp{\Phi})$ be its derivatives as in Definition~\ref{def:derivative_wd}. Suppose that $M=(S,\inp{M},\midp{M},\outp{M})\in\LS(X)$ is an $\dProd{X}$-linear system. We define \emph{the $\LS$-application of $\varphi$ to $M$}, denoted $\LS(\varphi)(M)\in\LS(Y)$, to be the $\dProd{Y}$-linear system $\LS(\varphi)(M)=(T,\inp{N},\midp{N},\outp{N})$, where $T\coloneqq S$ and
\begin{equation}\label{eqn:LS_formulas}
\inp{N}\coloneqq \inp{M}\inp{\Phi},\qquad
\midp{N}\coloneqq \midp{M}+\inp{M}\midp{\Phi}\outp{M},\qquad
\outp{N}\coloneqq \outp{\Phi}\outp{M}.
\end{equation}

\end{definition}

\begin{theorem}\label{thm:LS_sym_mon_func}

The assignments $X\mapsto\LS(X)$ and $\varphi\mapsto\LS(\varphi)$, with parallel composition as in Definition~\ref{def:LDS_parallel} constutute a symmetric monoidal functor $\LS\colon\cat{W}_{\Lin}\to\Set$.

\end{theorem}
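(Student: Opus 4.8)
The plan is to mirror the proof of Theorem~\ref{thm:DS_sym_mon_func}: first check that the assignments preserve composition and identities of wiring diagrams (functoriality), then check compatibility with the symmetric monoidal structures. The guiding observation is that the three formulas in \eqref{eqn:LS_formulas} describing how a wiring diagram acts on the dynamic components $(\inp{M},\midp{M},\outp{M})$ of a linear system have exactly the same shape as the three formulas in Lemma~\ref{lemma:derivative_comp_wd} describing how the derivatives of two composable wiring diagrams combine. So functoriality will follow formally from that lemma, with the ``derivative of a wiring diagram'' playing here the role that the wiring diagram itself plays for discrete systems, and Lemma~\ref{lemma:derivative_comp_wd} playing the role of Lemma~\ref{lemma:technical}.

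For functoriality I would fix $X\To{\varphi}Y\To{\psi}Z$ and $M=(S,\inp{M},\midp{M},\outp{M})\in\LS(X)$, write $(\inp{\Phi},\midp{\Phi},\outp{\Phi})$ and $(\inp{\Psi},\midp{\Psi},\outp{\Psi})$ for the derivatives of $\varphi$ and $\psi$, and expand both sides. Applying \eqref{eqn:LS_formulas} to $\varphi$ and then to $\psi$ gives a linear system on $Z$ with state space $S$ and dynamic components
\[
\inp{M}\inp{\Phi}\inp{\Psi},\qquad \midp{M}+\inp{M}\midp{\Phi}\outp{M}+\inp{M}\inp{\Phi}\midp{\Psi}\outp{\Phi}\outp{M},\qquad \outp{\Psi}\outp{\Phi}\outp{M}.
\]
Applying \eqref{eqn:LS_formulas} to $\psi\circ\varphi$ directly, and substituting the derivatives $\inp{\Omega}=\inp{\Phi}\inp{\Psi}$, $\midp{\Omega}=\midp{\Phi}+\inp{\Phi}\midp{\Psi}\outp{\Phi}$, $\outp{\Omega}=\outp{\Psi}\outp{\Phi}$ supplied by Lemma~\ref{lemma:derivative_comp_wd}, gives a linear system on $Z$ with state space $S$ and dynamic components $\inp{M}\inp{\Omega}$, $\midp{M}+\inp{M}\midp{\Omega}\outp{M}$, $\outp{\Omega}\outp{M}$; distributing $\inp{M}(-)\outp{M}$ over the sum in $\midp{\Omega}$ matches these term by term with the expressions above, so the two systems are equal. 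For the identity wiring diagram I would read its derivative straight off Definition~\ref{def:derivative_wd}: since $\vinp{\id_X}$ is a projection and $\voutp{\id_X}$ is the identity, we get $\inp{\Phi}=\id$, $\midp{\Phi}=0$, $\outp{\Phi}=\id$, so \eqref{eqn:LS_formulas} returns $(S,\inp{M},\midp{M},\outp{M})=M$ unchanged.

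For symmetric monoidality the point is that the derivative construction itself respects $\boxplus$: up to the evident reordering isomorphisms, the derivatives of $\varphi_1\boxplus\varphi_2$ are the direct sums $\inp{\Phi_1}\oplus\inp{\Phi_2}$, $\midp{\Phi_1}\oplus\midp{\Phi_2}$, $\outp{\Phi_1}\oplus\outp{\Phi_2}$. Then block-diagonal matrix arithmetic applied to \eqref{eqn:LS_formulas} gives the naturality square $\LS(\varphi_1\boxplus\varphi_2)(M_1\oplus M_2)\cong\LS(\varphi_1)(M_1)\oplus\LS(\varphi_2)(M_2)$, and the remaining coherence data (associativity, symmetry, and the unit map $1\to\LS(\square)$, which selects the zero-dimensional linear system) reduce to the corresponding facts about $\oplus$ in $\Lin$, exactly as the discrete case reduced via Lemma~\ref{lemma:dep_prod} to facts about $\times$ in $\Set$.

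I do not expect a deep obstacle. The only thing demanding care is bookkeeping: keeping the composition order of the matrices straight, and being sure that $\inp{\Phi},\midp{\Phi},\outp{\Phi}$ are honest linear maps so that no affine term sneaks in when one iterates \eqref{eqn:LS_formulas}; but that is precisely the content of the remark following Definition~\ref{def:derivative_wd} (they are matrices of $1$'s and $0$'s built from projections and diagonals), and the chain-rule work has already been absorbed into Lemma~\ref{lemma:derivative_comp_wd}. Granting that lemma, the verification is mechanical.
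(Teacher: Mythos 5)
Your proof is correct and follows the same route as the paper's (very terse) argument: functoriality is reduced to Lemma~\ref{lemma:derivative_comp_wd} by expanding the formulas in \eqref{eqn:LS_formulas}, and monoidality is reduced to the compatibility of direct sums with those formulas. You have simply written out the term-by-term matching and the identity check that the paper leaves implicit, and both computations are accurate.
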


\begin{proof}

For any pair $X\To{\varphi}Y\To{\psi}Z$ of composable wiring diagrams and linear system $M\in\LS(X)$, one must check that $\LS(\psi\circ\varphi)(M)=\LS(\psi)\big(\LS(\varphi)(M)\big)$, but this follows easily from Lemma~\ref{lemma:derivative_comp_wd} and Definition~\ref{def:LS_wiring}. It is lax monoidal because direct sum appropriately commutes with the formulas in \eqref{eqn:LS_formulas}.\end{proof}

\subsection{Continuous systems}

\begin{definition}\label{def:CS_wiring}

Let $\varphi\colon X\to Y$ be a wiring diagram in $\cat{W}_{\Euc}$, and suppose that $F=(S,\rdt{f},\dyn{f})\in\CS(X)$ is an $\dProd{X}$-continuous system. We define \emph{the $\CS$-application of $\varphi$ to $F$}, denoted $\CS(\varphi)(F)\in\CS(Y)$, to be the $\dProd{Y}$-continuous system $\CS(\varphi)(F)=(T,\rdt{g},\dyn{g})$ where
\begin{equation}\label{eqn:CS_formulas}
T\coloneqq S,\quad \rdt{g}(s)\coloneqq \voutp{\varphi}\left(\rdt{f}(s)\right),\quad\dyn{g}(y,s)\coloneqq \dyn{f}\left(\vinp{\varphi}\left(y,\rdt{f}(s)\right),s\right)
\end{equation}

\end{definition}

\begin{theorem}\label{thm:CS_sym_mon_func}

The assignments $X\mapsto\CS(X)$ and $\varphi\mapsto\CS(\varphi)$, with parallel composition as in Definition~\ref{def:CDS_parallel} constitute a symmetric monoidal functor $\CS\colon\cat{W}_\Euc\to\Set$.

\end{theorem}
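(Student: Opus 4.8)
The plan is to follow the template of the proof of Theorem~\ref{thm:DS_sym_mon_func} almost verbatim, exploiting the fact that the formulas \eqref{eqn:CS_formulas} defining $\CS(\varphi)$ are formally identical to the formulas \eqref{eqn:DS_formulas} defining $\DS(\varphi)$, with the dynamics $\dyn{f}$ playing exactly the role of the update function $\upd{f}$. The only genuinely new content is checking that the constructions stay inside the category of continuous systems---i.e.\ that the relevant maps are smooth and that the dynamics remains a parameterized vector field---together with the tangent-bundle bookkeeping that is invisible in the discrete case.

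First I would check that $\CS(\varphi)(F)$ is well-defined. The readout $\rdt{g}=\voutp{\varphi}\circ\rdt{f}$ is a composite of smooth maps, hence smooth. For $\dyn{g}(y,s)=\dyn{f}\big(\vinp{\varphi}(y,\rdt{f}(s)),s\big)$, smoothness again follows since $\vinp{\varphi}$, $\rdt{f}$, and $\dyn{f}$ are all smooth. To see that $\dyn{g}$ is an $\vinp{Y}$-parameterized vector field on $S$, i.e.\ that $\pi_S\circ\dyn{g}=\pr_S$ as in \eqref{eqn:slice_Euc}, one uses that $\dyn{f}$ is already a parameterized vector field: $\pi_S(\dyn{f}(a,s))=s$ for every $a$, so in particular $\pi_S(\dyn{g}(y,s))=\pi_S\big(\dyn{f}(\vinp{\varphi}(y,\rdt{f}(s)),s)\big)=s$. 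Next, since the underlying-set functor $\Euc\to\Set$ is faithful, two continuous systems with the same state space whose readouts and dynamics agree as set functions are equal; hence the identity $\CS(\psi)\big(\CS(\varphi)(F)\big)=\CS(\psi\circ\varphi)(F)$ can be verified on underlying sets, where it becomes the exact same chain of equalities as in the proof of Theorem~\ref{thm:DS_sym_mon_func}, the penultimate step being an application of Lemma~\ref{lemma:technical}. Unitality is immediate from the definition of $\id_X$.

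For the symmetric monoidal structure, parallel composition of continuous systems is given in Definition~\ref{def:CDS_parallel}, and the coherence reduces, as in the discrete case, to the fact (Lemma~\ref{lemma:dep_prod}) that dependent product is symmetric monoidal, sending coproducts of typed finite sets to products. The one extra ingredient is that the canonical isomorphism $T(S_1\times S_2)\cong TS_1\times TS_2$ used in Definition~\ref{def:CDS_parallel} is natural in $S_1$ and $S_2$; granting this, $\dyn{(f_1\boxtimes f_2)}$ is again a parameterized vector field, and the functor $\CS$ intertwines $\boxplus$ on $\cat{W}_\Euc$ with $\times$ on $\Set$ up to coherent isomorphism.

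The main obstacle---really the only point requiring care rather than routine unwinding---is the tangent-bundle bookkeeping: one must make sure that after wiring and after parallelizing, the composite map into $T(-)$ still projects correctly onto the state space, and that the identification $T(S_1\times S_2)\cong TS_1\times TS_2$ behaves naturally under all the smooth maps in sight. Once this is pinned down, the algebra of the functoriality computation is literally that of Theorem~\ref{thm:DS_sym_mon_func}.
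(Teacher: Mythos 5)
Your proposal is correct and takes essentially the same route as the paper, which simply observes that the formulas \eqref{eqn:DS_formulas} and \eqref{eqn:CS_formulas} are identical and that the proof of Theorem~\ref{thm:DS_sym_mon_func} carries over virtually unchanged. The extra details you supply---smoothness of the composites, preservation of the parameterized-vector-field condition $\pi_S\circ\dyn{g}=\pr_S$, and naturality of $T(S_1\times S_2)\cong TS_1\times TS_2$---are exactly the checks the paper leaves implicit in the phrase ``one can check that a virtually identical proof suffices.''
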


\begin{proof}

Although this Theroem takes place in a different context than that of Theorem~\ref{thm:DS_sym_mon_func}, namely that of continuous rather than discrete dynamical systems, the formulas \eqref{eqn:DS_formulas} and \eqref{eqn:CS_formulas} are identical, and one can check that a virtually identical proof suffices here.\end{proof}

\subsection{Matrices}

\begin{definition}\label{def:Mat_wiring}

Let $\varphi\colon X\to Y$ be a wiring diagram in $\cat{W}_{\Set}$, and suppose that $M\in\Mat(X)$ is a $(\vinp{X}\times\voutp{X})$-matrix. We define \emph{the $\Mat$-application of $\varphi$ to $M$}, denoted $N=\Mat(\varphi)(M)\in\Mat(Y)$, to be the $(\vinp{Y}\times\voutp{Y})$-matrix with $(i,j)$-entry
\begin{equation}\label{eqn:Mat_formulas}
N_{i,j}\coloneqq \sum_{k\in(\voutp{\varphi})^{-1}(j)}M_{\vinp{\varphi}(i,k),k}
\end{equation}
for any $i\in\vinp{Y}$ and $j\in\voutp{Y}$.

\end{definition}

\begin{example}\label{ex:serial_works}

We want to show that the formula in Definition~\ref{def:Mat_wiring} reduces to the usual matrix multiplication formula in the case of serial composition. We begin by converting our serial composition diagram into a single-inner-box wiring diagram by parallelizing, as discussed in the beginning of Section~\ref{sec:implenting_wiring}.
\[
\begin{tikzpicture}[oriented WD, bbx=1em, bby=1ex]
 \node[bb={1}{1},bb name=$X_1$] (X1) {};
 \node[bb={1}{1},right =2 of X1, bb name=$X_2$] (X2) {};
 \node[bb={1}{1}, fit={($(X1.north west)+(-1,3)$) ($(X1.south)+(0,-3)$) ($(X2.east)+(1,0)$)}, bb name = $Y$] (Y) {};
 \draw[ar] (Y_in1') to (X1_in1);
 \draw[ar] (X1_out1) to (X2_in1);
 \draw[ar] (X2_out1) to (Y_out1');
 \draw[label] 
	node at ($(Y_in1')!.5!(X1_in1)+(0,7pt)$)  {$I$}
	node at ($(X1_out1)!.5!(X2_in1)+(0,7pt)$)   {$J$}
	node at ($(X2_out1)!.5!(Y_out1')+(0,7pt)$)  {$K$};
\end{tikzpicture}
\qquad\qquad
\begin{tikzpicture}[oriented WD, bbx=1em, bby=1ex, bb port sep=1.5]
	 \node[bb={2}{2},] (X) {$\;X_1\boxplus X_2\;$};
	 \node[bb={1}{1}, fit={($(X.north west)+(-1,3)$) ($(X.south east)+(1,-4/1.5)$)}, bb name = $Y$] (Y) {};
	\draw (Y_in1') to (X_in1);
	\draw (X_out2) to (Y_out1');
	\draw[ar] let \p1=(X.south east), \p2=(X.south west), \n1={\y1-1.5\bby}, \n2=\bbportlen in
 		(X_out1) to[in=0]  (\x1+\n2,\n1) -- (\x2-\n2,\n1) to[out=180] (X_in2);
	\draw[label] 
		node[above left=.15 and .1 of X_in1]  {$I$} 
		node[below = 2 of X.south] {$J$}
		node[above left=.15 and .1 of Y_out1'] {$K$};
\end{tikzpicture}
\]
Thus we let $X=X_1\boxplus X_2$, let $M^1\in\Mat(X_1)$ and $M^2\in\Mat(X_2)$, and define $M=M^1\otimes M^2\in\Mat(X)$ to be the Kronecker product; see Definition~\ref{def:Mat_parallel}. Note that $\vinp{Y}=I$, $\voutp{Y}=K$, $\vinp{X}=I\times J$, and $\voutp{X}=J\times K$. Then the wiring diagram $\varphi\colon X\to Y$ above acts as follows (see \eqref{eqn:dprod_formula}) on entries:
\begin{compactenum}
	\item for $i\in\vinp{Y}$ and $(j_1,j_2)\in\voutp{X}$, we have $\vinp{\varphi}(i,j_1,j_2)=(i,j_1)$,
	\item for $(j_1,j_2)\in\voutp{X}$, we have $\voutp{\varphi}(j_1,j_2)=j_2.$
\end{compactenum}
Define $N=\Mat(\varphi)(M)$. To show that $N=M^1M^2$, we compute its entries using Equations~\eqref{eqn:matrix_tensor}~and~\eqref{eqn:Mat_formulas}:
\begin{align*}
	N_{i,j}&=
	\sum_{\parbox{.7in}{\centering $\scriptstyle k\in(\voutp{\varphi})^{-1}(j)$}}
		M_{\vinp{\varphi}(i,k),k}
	\\&=
	\sum_{\parbox{.7in}{\centering $\scriptstyle \{(j_1,j_2)\mid j_2=j\}$}}
		(M^1\otimes M^2)_{(i,j_1),(j_1,j_2)}
	\\&=
	\sum_{\parbox{.7in}{\centering $\scriptstyle j_1$}}
		M^1_{i,j_1}\cdot M^2_{j_1,j}
	\\&=
	\parbox{.7in}{\centering $M^1M^2$}
\end{align*}
Thus we have shown that, armed with the Kronecker product formula for parallel composition (Definition~\ref{def:Mat_parallel}) and the formula for arbitrary wiring diagrams (Definition~\ref{def:Mat_wiring}), we reproduce the the matrix multiplication formula for serial wiring diagrams, as in Example~\ref{ex:serial_Mat}. 

\end{example}

\begin{example}\label{ex:trace_works}

We want to show that the formula in Definition~\ref{def:Mat_wiring} reduces to the usual partial trace formula in the case of feedback composition. Consider the following wiring diagram $\varphi\colon X\to Y$:
\[
\begin{tikzpicture}[oriented WD, bbx=1em, bby=1ex]
	\node[bb={2}{2}, bb name=$X$] (dom) {};
	\node[bb={1}{1}, fit={(dom) ($(dom.north east)+(1,4)$) ($(dom.south west)-(1,2)$)}, bb name = $Y$] (cod) {};
	\draw[ar,pos=20] (cod_in1') to (dom_in2);
	\draw[ar,pos=2] (dom_out2) to (cod_out1');
	\draw[ar] let \p1=(dom.north east), \p2=(dom.north west), \n1={\y2+\bby}, \n2=\bbportlen in (dom_out1) to[in=0] (\x1+\n2,\n1) -- (\x2-\n2,\n1) to[out=180] (dom_in1);
	\draw[label] 
		node[below left=2pt and 3pt of dom_in2]{$I$}
		node[below right=2pt and 3pt of dom_out2]{$J$}
		node[above left=4pt and 6pt of dom_in1] {$K$}
		node[above right=4pt and 6pt of dom_out1] {$K$};
\end{tikzpicture}
\]
Analogously to Example~\ref{ex:serial_works}, we find that $\vinp{\varphi}(k,j,i)=(k,i)$ and $\voutp{\varphi}(k,j)=j$. Define $N=\Mat(\varphi)(M)$. To show that $N=\Tr^K_{I,J}(M)$ is the partial trace, as defined in \eqref{eqn:def_trace}, we compute its entries using Equation~\eqref{eqn:Mat_formulas}:
\begin{align*}
	N_{i,j}&=
	\sum_{\parbox{.9in}{\centering $\scriptstyle (k,j)\in(\voutp{\varphi})^{-1}(j)$}}
		M_{\vinp{\varphi}(k,j,i),(k,j)}
	\\&=
	\sum_{\parbox{.9in}{\centering $\scriptstyle k\in K$}}
		M_{(k,i),(k,j)}
	=
	\Tr^K_{I,J}(M).
\end{align*}

\end{example}

One can repeat Examples~\ref{ex:serial_works}~and~\ref{ex:trace_works} for splitting wires as in Example~\ref{ex:splitting_mat}; we leave this to the reader. We now prove (in Theorem~\ref{thm:Mat_sym_mon_func}) that one can make arbitrarily complex wiring diagrams and the matrix formula given in \eqref{eqn:Mat_formulas} is consistent with regard to nesting. This theorem holds for matrices over any semiring $R$, so we elide the subscript.

\begin{theorem}\label{thm:Mat_sym_mon_func}

The assignments $X\mapsto\Mat(X)$ and $\varphi\mapsto\Mat(\varphi)$, with parallel composition as in Definition~\ref{def:Mat_parallel} constitute a symmetric monoidal functor $\Mat\colon\cat{W}_\Set\to\Set$.

\end{theorem}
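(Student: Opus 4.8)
The plan is to follow the two-part template already used for Theorem~\ref{thm:DS_sym_mon_func}. First I would check that $\Mat$ is a functor $\cat{W}_\Set\to\Set$, i.e.\ that it preserves identities and composition; then I would check that the Kronecker product of Definition~\ref{def:Mat_parallel} furnishes $\Mat$ with the coherent structure maps of a lax symmetric monoidal functor. Every verification is a manipulation of the single formula \eqref{eqn:Mat_formulas}, fed by the two facts already proved about dependent products: that they convert composition of wiring diagrams into the nested formula of Lemma~\ref{lemma:technical}, and that they convert coproducts into products (Lemma~\ref{lemma:dep_prod}). On the algebraic side, all that is used is the distributive law together with the rearrangement axioms for (possibly infinite) sums in a complete semiring.

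For functoriality on identities, recall that $\voutp{(\id_X)}=\id_{\voutp{X}}$ while $\vinp{(\id_X)}\colon\vinp{X}\times\voutp{X}\to\vinp{X}$ is the projection; substituting into \eqref{eqn:Mat_formulas} restricts the sum to the single index $k=j$ and leaves $N_{i,j}=M_{i,j}$, so $\Mat(\id_X)=\id_{\Mat(X)}$. For composition, fix $X\To{\varphi}Y\To{\psi}Z$ and $M\in\Mat(X)$, and set $N\coloneqq\Mat(\varphi)(M)$ and $P\coloneqq\Mat(\psi)(N)$. Unwinding \eqref{eqn:Mat_formulas} twice gives, for $i\in\vinp{Z}$ and $j\in\voutp{Z}$,
\[
P_{i,j}=\sum_{k\in(\voutp{\psi})^{-1}(j)}\ \sum_{\ell\in(\voutp{\varphi})^{-1}(k)}M_{\vinp{\varphi}(\vinp{\psi}(i,k),\,\ell),\,\ell}.
\]
By Lemma~\ref{lemma:technical} one has $\voutp{(\psi\circ\varphi)}=\voutp{\psi}\circ\voutp{\varphi}$, so the assignment $(k,\ell)\mapsto\ell$ is a bijection from the index set of this double sum onto $(\voutp{(\psi\circ\varphi)})^{-1}(j)$, with $k=\voutp{\varphi}(\ell)$ recoverable from $\ell$; rewriting the sum along this bijection turns the inner argument into $\vinp{\varphi}\big(\vinp{\psi}(i,\voutp{\varphi}(\ell)),\ell\big)$, which Lemma~\ref{lemma:technical} identifies with $\vinp{(\psi\circ\varphi)}(i,\ell)$. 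Hence $P_{i,j}=\sum_{\ell\in(\voutp{(\psi\circ\varphi)})^{-1}(j)}M_{\vinp{(\psi\circ\varphi)}(i,\ell),\ell}=\Mat(\psi\circ\varphi)(M)_{i,j}$, as wanted; when the sets involved are infinite, this collapse of a fibered sum-of-sums into a single sum is exactly one of the complete-semiring rearrangement axioms.

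For the monoidal structure, the laxator $\mu_{X_1,X_2}\colon\Mat(X_1)\times\Mat(X_2)\to\Mat(X_1\boxplus X_2)$ is $(M^1,M^2)\mapsto M^1\otimes M^2$, and the unit map $\{\ast\}\to\Mat(\square)\cong R$ selects $1\in R$ (here $\vinp{\square}\cong\voutp{\square}\cong 1$). Naturality of $\mu$ is the statement that $\Mat(\varphi_1\boxplus\varphi_2)(M^1\otimes M^2)=\Mat(\varphi_1)(M^1)\otimes\Mat(\varphi_2)(M^2)$ for wiring diagrams $\varphi_i\colon X_i\to Y_i$. Since $\boxplus$ of wiring diagrams is formed coordinatewise and $\dProd{-}$ sends coproducts to products (Lemma~\ref{lemma:dep_prod}), one gets $\voutp{(\varphi_1\boxplus\varphi_2)}=\voutp{\varphi_1}\times\voutp{\varphi_2}$ and $\vinp{(\varphi_1\boxplus\varphi_2)}=\vinp{\varphi_1}\times\vinp{\varphi_2}$ (up to the symmetry isomorphism that groups the $\inp{Y}$-factors together and the $\outp{X}$-factors together). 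Thus the fiber $(\voutp{(\varphi_1\boxplus\varphi_2)})^{-1}(j_1,j_2)$ is a product of two fibers, the defining sum \eqref{eqn:Mat_formulas} splits as a product of two sums by (infinitary) distributivity, and combining this with the entrywise formula \eqref{eqn:matrix_tensor} produces exactly $\Mat(\varphi_1)(M^1)\otimes\Mat(\varphi_2)(M^2)$. The associativity, unit, and symmetry coherence squares for $\mu$ and the unit then reduce to the corresponding coherences of $\times$ in $\Set$ and of $\cdot$ in $R$, together with the associativity and commutativity of the Kronecker product; these are routine.

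The main obstacle I anticipate is bookkeeping rather than anything conceptual: one must keep the roles of $\vinp{\varphi}$, $\voutp{\varphi}$ and their analogues for $\psi$ and for $\boxplus$ perfectly straight while rewriting the possibly infinite sums in \eqref{eqn:Mat_formulas}, and in particular one must present each reindexing---the fibered collapse in the functoriality step and the factorization in the naturality step---as a genuine bijection of index sets, so that the complete-semiring rearrangement laws legitimately apply. Once those index sets are pinned down, every remaining equality is an instance of distributivity, a sum-rearrangement axiom, or a direct citation of Lemma~\ref{lemma:technical} or Lemma~\ref{lemma:dep_prod}.
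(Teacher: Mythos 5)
Your proposal is correct and follows essentially the same route as the paper's proof: the composition check collapses the double sum via the fibration of $(\voutp{(\psi\circ\varphi)})^{-1}(j)$ and an appeal to Lemma~\ref{lemma:technical}, and the monoidality check factors the fiber of $\voutp{(\varphi_1\boxplus\varphi_2)}$ as a product of fibers and distributes. You additionally spell out identity preservation and the unit/coherence data, which the paper leaves implicit, but the substance of the argument is the same.
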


\begin{proof}

We need to check that for any $X\To{\varphi}Y\To{\psi}Z$ and matrix $M\in\Mat(X)$, the following equation holds:
\[\Mat(\psi)\big(\Mat(\varphi)(M)\big)=\Mat(\psi\circ\varphi)(M).\]
We again simply compute the $(i,j)$-entries, using Equation~\eqref{eqn:Mat_formulas}, and show they agree:
\begin{align*}
	\Mat(\psi)\big(\Mat(\varphi)(M)\big)_{i,j}
	&=
	\sum_{\parbox{.9in}{\centering $\scriptstyle \ell\in(\voutp{\psi})^{-1}(j)$}}
		\Mat(\varphi)(M)_{\vinp{\psi}(i,\ell),\ell}
	\\&=
	\sum_{\parbox{.9in}{\centering $\scriptstyle \ell\in(\voutp{\psi})^{-1}(j)$}}
		\left(\;\sum_{k\in(\voutp{\varphi})^{-1}(\ell)}
			M_{\vinp{\varphi}\left(\vinp{\psi}(i,\ell),k\right),k}\right)
	\\&=
	\sum_{\parbox{.9in}{\centering $\scriptstyle k\in\left(\voutp{(\psi\circ\varphi)}\right)^{-1}(j)$}}
		M_{\vinp{\varphi}\left(\vinp{\psi}\left(i,\voutp{\varphi}(k)\right),k\right),k}
	\\&=
	\sum_{\parbox{.9in}{\centering $\scriptstyle k\in\left(\voutp{(\psi\circ\varphi)}\right)^{-1}(j)$}}
		M_{\vinp{(\psi\circ\varphi)}(k),k}
	\\&=
	\Mat(\psi\circ\varphi)(M)_{i,j}
\end{align*}
where the penultimate equation follows from Lemma~\ref{lemma:technical}.

Checking that it is monoidal involves a similar computation. Let $\varphi_1\colon X_1\to X_1'$ and $\varphi_2\colon X_2\to X_2'$, let $M^1\in\Mat(X_1)$ and $M^2\in\Mat(X_2)$. Then for $(i_1,i_2)\in\vinp{X_1}\times\vinp{X_2}$ and $(j_1,j_2)\in\voutp{X_1}\times\voutp{X_2}$, we have
\begin{align*}
	\Mat(\varphi_1\boxplus\varphi_2)(M^1\otimes M^2)_{(i_1,i_2),(j_1,j_2)}
	&=
	\sum_{\parbox{1.2in}{\centering $\scriptstyle k\in\left(\voutp{(\varphi_1\boxplus\varphi_2)}\right)^{-1}(j_1,j_2)$}}
		(M^1\otimes M^2)_{\voutp{(\varphi_1\boxplus\varphi_2)}((i_1,i_2),k),k}
	\\&=
	\sum_{\substack{
		\parbox{1.2in}{\centering $\scriptstyle 
			k_1\in\left(\voutp{\varphi_1}\right)^{-1}(j_1)
		$}
		\\
		\parbox{1.2in}{\centering $\scriptstyle 
			k_2\in\left(\voutp{\varphi_2}\right)^{-1}(j_2)
		$}
	}}
		M^1_{\voutp{\varphi_1}(i_1,k_1),k_1}\cdot M^2_{\voutp{\varphi_2}(i_2,k_2),k_2}
	\\&=
	\Mat(\varphi_1)(M^1)\otimes\Mat(\varphi_2)(M^2)
\end{align*} 
\end{proof}

\section{Compositional mappings between open systems and matrices}\label{sec:compositional_mappings}

In this section we define a few maps between various interpretations of the wiring diagram syntax. Each of these will be compositional, meaning that one can interconnect a system of systems and then apply the map, or apply the maps and then interconnect, and the result will be the same. 

First we show that Euler's method of approximating an ordinary differential equation by $\epsilon$-steps is compositional, whether one targets discrete systems or measurable systems. Second we show that the steady state matrix---starting from discrete systems, measurable systems, or continuous systems---is also compositional. At this point, we will have achieved our goal of compositionally classifying any of these sorts of open dynamical systems using steady state matrices. Third we show that while the entries of ordinary matrices are used to count the number of steady states, we can actually "remember them" by having the entries be the sets themselves. Finally, we discuss linear stability for continuous dynamical systems and show that it too is compositional, thus recovering a fully compositional generalization of bifurcation diagrams.

\subsection{Euler's $\epsilon$-approximation is compositional}

Note that any Euclidean space $S$ has an underlying vector space (which we denote the same way). For any point $s\in S$ there is a canonical linear isomorphism $TS_s\To{\cong}S$. Thus for any real number $\epsilon$ and element $v\in TS_s$, the formula $s+\epsilon\cdot v$ makes sense, where $\cdot$ represents scalar multiplication. The following definition formalizes Construction~\ref{const:Euler}. 

\begin{definition}\label{def:epsilon_approx}

Let $X\in\cat{W}$ be a box, and let $F=(S,\rdt{f},\dyn{f})\in\CS(X)$ be a continuous dynamical system. Its $\epsilon$-approximation is the discrete dynamical system $\Appx(F)=(S,\rdt{f},\upd{f}_\epsilon)\in\DS(X)$, with the same state set and readout function, but where for any $x\in\vinp{X}$ and $s\in S$, we define
\[\upd{f}_\epsilon(x,s)\coloneqq s+\epsilon\cdot\upd{f}(x,s).\]

\end{definition}

Above we have elided forgetful functors, namely the underlying vector space and underlying set functors, $\Euc\to\Vect$ and $\Euc\to\Set$. The following theorem says that if one discretizes each continuous dynamical system in a coupled network and puts them together, the result will be the same as putting the continuous systems together and then discretizing.

\begin{theorem}\label{thm:approximation_CSDS}

For any $\epsilon>0$, the $\epsilon$-approximation function $\Appx\colon\CS\to\DS$ is compositional, i.e., a monoidal natural transformation of $\cat{W}$-algebras.

\end{theorem}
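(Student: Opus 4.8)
The plan is to verify that $\Appx$ is a monoidal natural transformation $\CS\Rightarrow\DS$ of lax monoidal functors $\cat{W}_\Euc\to\Set$ (after composing with the underlying-set functor $\Euc\to\Set$ where needed). Concretely this amounts to checking two things: (1) naturality, i.e. for every wiring diagram $\varphi\colon X\to Y$ and every continuous system $F\in\CS(X)$ we have $\Appx(\CS(\varphi)(F))=\DS(\varphi)(\Appx(F))$ in $\DS(Y)$; and (2) monoidality, i.e. $\Appx(F_1\boxtimes F_2)=\Appx(F_1)\boxtimes\Appx(F_2)$, compatibly with the coherence data. I would dispatch these in that order, since (1) is the substantive point and (2) is a short formality.

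For naturality, write $F=(S,\rdt{f},\dyn{f})$. Both sides have underlying state set $S$ (Construction/Definition~\ref{def:epsilon_approx} keeps the state set, as do Definitions~\ref{def:CS_wiring} and~\ref{def:DS_wiring}), and both have readout $\voutp{\varphi}\circ\rdt{f}$, again because none of the three operations touches readouts beyond post-composing with $\voutp{\varphi}$. So the only content is the update function. On the left, $\CS(\varphi)(F)$ has dynamics $\dyn{g}(y,s)=\dyn{f}(\vinp{\varphi}(y,\rdt{f}(s)),s)$ by \eqref{eqn:CS_formulas}, and then $\epsilon$-approximating gives update $s+\epsilon\cdot\dyn{f}(\vinp{\varphi}(y,\rdt{f}(s)),s)$. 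On the right, $\Appx(F)$ has update $\upd{f_\epsilon}(x,s)=s+\epsilon\cdot\dyn{f}(x,s)$, and then applying $\DS(\varphi)$ via \eqref{eqn:DS_formulas} substitutes $x=\vinp{\varphi}(y,\rdt{f}(s))$ — note the readout used in the $\DS$-formula is that of $\Appx(F)$, which is the same $\rdt{f}$ — yielding $s+\epsilon\cdot\dyn{f}(\vinp{\varphi}(y,\rdt{f}(s)),s)$. The two agree on the nose. The one genuine subtlety, and the step I expect to require the most care, is the identification of tangent vectors with vectors so that ``$s+\epsilon\cdot v$'' is well-defined and commutes with the maps in sight: one must check that the canonical isomorphism $TS_s\xrightarrow{\cong}S$ is respected by the composition in \eqref{eqn:CS_formulas}, i.e. that $\dyn{f}$ being a parameterized vector field (so $\pi_S\circ\dyn{f}=\pr_S$) is exactly what makes the basepoint of $\dyn{f}(\vinp{\varphi}(y,\rdt{f}(s)),s)$ equal to $s$, so that the Euler step lands back in $S$ consistently on both sides. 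This is where one should be explicit that the elided forgetful functors $\Euc\to\Vect$ and $\Euc\to\Set$ do not interfere.

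For monoidality, observe that parallel composition on both $\CS$ and $\DS$ is just cartesian product of state spaces with the product readout and the (isomorphism-adjusted) product of update/dynamics maps (Definitions~\ref{def:CDS_parallel} and~\ref{def:DDS_parallel}). Since scalar multiplication and addition in a product Euclidean space $S_1\times S_2$ are computed coordinatewise, the Euler step $(s_1,s_2)+\epsilon\cdot(v_1,v_2)=(s_1+\epsilon v_1,\,s_2+\epsilon v_2)$ factors through the two factors, which is precisely the statement that $\Appx$ commutes with $\boxtimes$; the coherence isomorphisms are the evident product rearrangements and are preserved because they are, again, built from projections and diagonals. Finally, one remarks that a monoidal natural transformation between lax monoidal functors carries the structure of the promised natural transformation of $\cat{W}$-algebras, completing the proof.
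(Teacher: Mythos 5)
Your proof is correct and follows essentially the same route as the paper: naturality is checked by computing that both sides yield the update $s+\epsilon\cdot\dyn{f}\big(\vinp{\varphi}(y,\rdt{f}(s)),s\big)$, and monoidality reduces to the Euler step commuting with products of state spaces. Your extra attention to the canonical identification $TS_s\cong S$ is a welcome clarification of a point the paper handles implicitly.
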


\begin{proof}

We want to show that $\epsilon$-approximation is a monoidal natural transformation,
\[
\begin{tikzcd}[column sep=small, row sep=large]
	\cat{W}_{\Euc}\ar[rr,"\cat{W}_U"]\ar[dr,"\CS"']&\ar[d,phantom,near start, "{\overset{\Appx}{\Rightarrow}}"]&\cat{W}_{\Set}\ar[dl,"\DS"]\\
	&\Set
\end{tikzcd}
\]
where $\cat{W}_U$ is the forgetful functor that comes from the product-preserving functor $U\colon\Euc\to\Set$ sending a Euclidean space to its underlying set of points. In the discussion below, we drop subscripts for ease of exposition.

First we must check that for every wiring diagram $\varphi\colon X\to Y$ in $\cat{W}$, the diagram below commutes:
\[
\begin{tikzcd}
	\CS(X)\ar[r,"\CS(\varphi)"]\ar[d,"\Appx"']&\CS(Y)\ar[d,"\Appx"]\\
	\DS(X)\ar[r,"\DS(\varphi)"']&\DS(Y)
\end{tikzcd}
\]
which establishes that $\epsilon$-approximation is a natural transformation. This is a matter of combining Definitions~\ref{def:DS_wiring}~and~\ref{def:CS_wiring} with Definition~\ref{def:epsilon_approx}: for any $F=(S,\rdt{f},\upd{f})\in\CS(X)$, both sides give
\[\DS(\varphi)\big(\Appx(F)\big)=s+\epsilon\cdot\upd{f}\Big(\vinp{\varphi}\big(y,\rdt{f}(s)\big),s\Big)=\Appx\big(\CS(\varphi)(F)\big).\]

Second we check that $\Appx$ is monoidal, i.e., that for any boxes $X_1,X_2\in\cat{W}$, the diagram below commutes:
\[
\begin{tikzcd}
	\CS(X_1)\boxtimes\CS(X_2)\ar[r,"\boxtimes"]\ar[d,"\Appx\times\Appx"']&\CS(X_1\boxplus X_2)\ar[d,"\Appx"]\\
	\DS(X_1)\boxtimes\DS(X_2)\ar[r,"\boxtimes"']&\DS(X_1\boxplus X_2)
\end{tikzcd}
\]
By Definitions~\ref{def:DDS_parallel}~and~\ref{def:CDS_parallel}, this comes down to checking that for $f_1\in\CS(X_1)$ and $f_2\in\CS(X_2)$, we have
\[\left(\upd{f_1}\right)_\epsilon\times\left(\upd{f_2}\right)_\epsilon=\left(\upd{f_1}\times\upd{f_2}\right)_\epsilon.\]
This in turn follows from the fact that $\epsilon$-approximation (Definition~\ref{def:epsilon_approx}) preserves products, i.e., for $(a_1,s_1)\in\CS(X_i)$ we have
\[\left(s_1+\epsilon\cdot\upd{f_1}(a_1,s_1),s_2+\epsilon\cdot\upd{f_2}(a_2,s_2)\right)=(s,t)+\epsilon\cdot\left(\upd{f_1}(a_1,s_1),\upd{f_2}(a_2,s_2)\right)\]
completing the proof.
\end{proof}

\begin{remark}

We could also consider the $\epsilon$-approximation function as a map $\Appx\colon\CS\to\MS$. First we need a monoidal functor $\cat{W}_U\colon\cat{W}_{\Euc}\to\cat{W}_{\MS}$; this is given by the product-preserving functor $U\colon\Euc\to\MS$ sending a Euclidean space to its underlying countably-separated measurable space of Borel sets (see Proposition~\ref{prop:csms}). The only other difference with Definition~\ref{thm:approximation_CSDS} is that we must specify a measure on the underlying measurable space $U(S)$. We use the canonical measure, given by integrating the volume form, that exists on any Euclidean space, or more generally, on any oriented manifold. 

\end{remark}

\subsection{Steady state matrices}

In Definition~\ref{def:steady_state_DDS} we introduced the notion of steady states for discrete dynamical systems. In Definition~\ref{def:steady_states_matrix} we gather these into a matrix, and in Theorem~\ref{thm:stst_compositional} we show that this mapping is compositional. That means that if we only know the steady states of each individual dynamical system in a coupled network, we can determine the steady states of their interconnection.%
\footnote{In fact, Theorem~\ref{thm:stst_compositional} is only about measuring the sets of steady states. If one wants access to the actual states themselves,  see Theorem~\ref{thm:Mat_sets_sym_mon_func}.
}

\begin{definition}\label{def:steady_states_matrix}

Let $X\in\cat{W}$ be a box, and let $F=(S,\rdt{f},\upd{f})\in\DS(X)$ be a discrete dynamical system. Its \emph{matrix of steady states} is the $(\vinp{X}\times\voutp{X})$-matrix, $\Stst(F)\in\Mat(X)$ given in Definition~\ref{def:discrete_to_matrix}. That is, its $(i,j)$-entry is defined by the number of steady states
\begin{equation}\label{eqn:steady_states}
M_{i,j}=\#\left\{s\in S\mid \rdt{f}(s)=j, \quad \upd{f}(i,s)=s\right\}
\end{equation}
for $i\in\vinp{X}, j\in\voutp{X}$.

\end{definition}

\begin{theorem}\label{thm:stst_compositional}

The steady state map $\Stst\colon\DS\to\Mat$ is compositional, i.e., a monoidal natural transformation of $\cat{W}$-algebras.

\end{theorem}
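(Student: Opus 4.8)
The plan is to verify the two conditions that make $\Stst$ a monoidal natural transformation of $\cat{W}$-algebras: naturality of the squares indexed by wiring diagrams, and compatibility with the monoidal (parallel-composition) structures established in Theorems~\ref{thm:DS_sym_mon_func} and~\ref{thm:Mat_sym_mon_func}. Compatibility with the monoidal units is immediate: the unit inhabitant of $\DS(\square)$ is the discrete system with one-point state set, whose unique state is trivially a steady state, so $\Stst$ sends it to the $1\times 1$ matrix with entry $1$, which is the unit of $\Mat(\square)$.

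\emph{Naturality.} Fix a wiring diagram $\varphi\colon X\to Y$ and a discrete system $F=(S,\rdt{f},\upd{f})\in\DS(X)$, and write $\DS(\varphi)(F)=(S,\rdt{g},\upd{g})$ as in Definition~\ref{def:DS_wiring}. I would compute the $(i,j)$-entry of each composite for $i\in\vinp{Y}$, $j\in\voutp{Y}$. Unwinding definitions,
\[
\Stst\big(\DS(\varphi)(F)\big)_{i,j}=\#\Big\{s\in S\ \Big|\ \voutp{\varphi}\big(\rdt{f}(s)\big)=j,\ \ \upd{f}\big(\vinp{\varphi}(i,\rdt{f}(s)),s\big)=s\Big\},
\]
while $\Mat(\varphi)(\Stst(F))_{i,j}=\sum_{k\in(\voutp{\varphi})^{-1}(j)}\Stst(F)_{\vinp{\varphi}(i,k),k}$ by~\eqref{eqn:Mat_formulas}. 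The crux is the observation that the set appearing above is the disjoint union, over $k\in(\voutp{\varphi})^{-1}(j)$, of the sets $\StstS(F)_{\vinp{\varphi}(i,k),k}=\{s\in S\mid\rdt{f}(s)=k,\ \upd{f}(\vinp{\varphi}(i,k),s)=s\}$: the condition $\voutp{\varphi}(\rdt{f}(s))=j$ says precisely that $k\coloneqq\rdt{f}(s)$ lies in $(\voutp{\varphi})^{-1}(j)$, and the union is disjoint because $k$ is a function of $s$. Since $\#$ carries disjoint unions to sums, taking $\#$ of both sides of this decomposition yields the desired equality, so the naturality square commutes.

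\emph{Monoidality.} Given $F_1=(S_1,\rdt{f_1},\upd{f_1})\in\DS(X_1)$ and $F_2=(S_2,\rdt{f_2},\upd{f_2})\in\DS(X_2)$, I would expand $\Stst(F_1\boxtimes F_2)$ using Definitions~\ref{def:DDS_parallel} and~\ref{def:steady_states_matrix}. Its $((i_1,i_2),(j_1,j_2))$-entry counts the pairs $(s_1,s_2)$ that are simultaneously $(i_1,j_1)$-steady for $F_1$ and $(i_2,j_2)$-steady for $F_2$; that is, it equals $\#\big(\StstS(F_1)_{i_1,j_1}\times\StstS(F_2)_{i_2,j_2}\big)$. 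Since $\#$ turns cartesian products into products of extended natural numbers, this is $\Stst(F_1)_{i_1,j_1}\cdot\Stst(F_2)_{i_2,j_2}$, which by~\eqref{eqn:matrix_tensor} is exactly the $((i_1,i_2),(j_1,j_2))$-entry of the Kronecker product $\Stst(F_1)\otimes\Stst(F_2)$. Hence $\Stst(F_1\boxtimes F_2)=\Stst(F_1)\otimes\Stst(F_2)$, and together with naturality this exhibits $\Stst$ as a monoidal natural transformation.

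There is no deep obstacle; both verifications reduce to the same elementary bookkeeping about how the steady-state set $\StstS$ interacts with the combinatorics of $\varphi$ and with products, using only the additivity and multiplicativity of $\#$. The single point deserving a word of care is that these two properties of $\#$ remain valid for infinite sets (with the conventions $r+\infty=\infty$ and, for $r\neq 0$, $r\cdot\infty=\infty$ recorded in Section~\ref{sec:notation_background}), so that the disjoint-union decomposition in the naturality step is legitimate even when some summands are infinite.
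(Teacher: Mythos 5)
Your proposal is correct and follows essentially the same route as the paper's proof: the naturality square is verified by decomposing the composite steady-state set as a disjoint union over $k\in(\voutp{\varphi})^{-1}(j)$ of the sets $\StstS(F)_{\vinp{\varphi}(i,k),k}$ and applying additivity of $\#$, and monoidality is checked entrywise via multiplicativity of $\#$ on products, matching \eqref{eqn:matrix_tensor}. The added remarks on the unit and on the infinite-cardinality conventions are fine but not points of divergence.
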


\begin{proof}

First we must check that for every wiring diagram $\varphi\colon X\to Y$ in $\cat{W}$, the diagram below commutes:
\[
\begin{tikzcd}
	\DS(X)\ar[r,"\DS(\varphi)"]\ar[d,"\Stst"']&\DS(Y)\ar[d,"\Stst"]\\
	\Mat(X)\ar[r,"\Mat(\varphi)"']&\Mat(Y)
\end{tikzcd}
\]
We compute both sides, using Equations~\eqref{eqn:DS_formulas},~\eqref{eqn:steady_states},~and~\eqref{eqn:Mat_formulas}, on an arbitrary $F=(S,\rdt{f},\upd{f})\in\DS(X)$:
\begin{align*}
	\Stst\big(\DS(\varphi)(F)\big)_{i,j}
&=
	\#\left\{s\in S\mid\voutp{\varphi}\big(\rdt{f}(s)\big)=j,\tn{ and } \upd{f}\Big(\vinp{\varphi}\big(i,\rdt{f}(s)\big),s\Big)=s\right\}
\\&=
	\sum_{k\in(\voutp{\varphi})^{-1}(j)}\#\left\{s\in S\mid\rdt{f}(s)=k,\tn{ and }\upd{f}\big(\vinp{\varphi}(i,k),s\big)=s\right\}
\\&=
	\Mat(\varphi)\big(\Stst(F)\big)_{i,j}.
\end{align*}
The middle equality follows because the sets $\left\{s\in S\mid\rdt{f}(s)=k,\tn{ and }\upd{f}\big(\vinp{\varphi}(i,k),s\big)=s\right\}$ are disjoint for varying values of $k$. It is easy to show that the functor $\Stst$ is monoidal; in particular,
\begin{align*}
	\Stst(F_1\boxtimes F_2)_{(i_1,i_2),(j_1,j_2)}
	&=
	\#\left\{(s_1,s_2)\in S_1\times S_2\;\;\middle|\;\;\parbox{2.4in}{$
		\rdt{(f_1\boxtimes f_2)}(s_1,s_2)=(j_1,j_2),\\
		\upd{(f_1\boxtimes f_2)}\big((i_1,i_2),(s_1,s_2)\big)=(s_1,s_2)
	$}\right\}
	\\&=
	\#\left\{(s_1,s_2)\in S_1\times S_2\;\;\middle|\;\;\parbox{2in}{$
		\rdt{f_1}(s_1)=j_1, \upd{f_1}(i_1,s_1)=s_1,\\
		\rdt{f_2}(s_2)=j_2, \upd{f_2}(i_2,s_2)=s_2
	$}\right\}
	\\&=\big(\Stst(F_1)\otimes\Stst(F_2)\big)_{(i_1,i_2),(j_1,j_2)}
\end{align*}
\end{proof}

\begin{lemma}\label{lemma:prep}

Suppose that $f\colon A\times B\to B$ and $g\colon B\to C$ are measurable functions between countably-separated measurable spaces. Then for any $a\in A$ and $c\in C$, the set 
\[X=\{b\in B\mid g(b)=c,\quad f(a,b)=b\}\] 
is measurable.

\end{lemma}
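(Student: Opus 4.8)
The plan is to exhibit $X$ as the intersection of two measurable subsets of $B$ and then invoke that a $\sigma$-algebra is closed under finite intersections. Fix $a\in A$ and $c\in C$ throughout.

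First I would dispose of the condition $g(b)=c$. Since $C$ is countably-separated, Proposition~\ref{prop:csms} (item~3) shows that the singleton $\{c\}\ss C$ is measurable; as $g$ is a measurable function, its preimage $g^{-1}(\{c\})=\{b\in B\mid g(b)=c\}$ is therefore measurable in $B$.

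Next I would treat the condition $f(a,b)=b$ by recognizing it as the fixed-point set of the partially applied map $f_a\colon B\to B$ given by $f_a(b)\coloneqq f(a,b)$. This map factors as $B\To{\iota_a}A\times B\To{f}B$, where $\iota_a(b)=(a,b)$. The section $\iota_a$ is measurable: for a basic rectangle $U\times V$ with $U\in\Sigma_A$ and $V\in\Sigma_B$ one has $\iota_a^{-1}(U\times V)=V$ if $a\in U$ and $\emptyset$ otherwise, both measurable, and such rectangles generate the $\sigma$-algebra making $A\times B$ the product in $\Meas$ (Proposition~\ref{prop:csms}, item~2). Hence $f_a=f\circ\iota_a$ is measurable, i.e., a morphism $B\to B$ in $\Meas$, where we use that $B$ is countably-separated by hypothesis. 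Proposition~\ref{prop:csms} (item~5) then gives that the fixed-point set $\{b\in B\mid f_a(b)=b\}=\{b\in B\mid f(a,b)=b\}$ is measurable.

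Finally, $X=g^{-1}(\{c\})\cap\{b\in B\mid f(a,b)=b\}$ is an intersection of two measurable sets, hence measurable (a $\sigma$-algebra is closed under complements and countable unions, so in particular under finite intersections). I expect no genuine obstacle; the only mildly fiddly point is verifying that the section $\iota_a$ is measurable for the product $\sigma$-algebra, but that is entirely routine given Proposition~\ref{prop:csms}.
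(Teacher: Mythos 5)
Your proposal is correct and follows essentially the same route as the paper: write $X$ as the intersection of $g^{-1}(\{c\})$ (measurable since singletons are measurable by Proposition~\ref{prop:csms}) with the fixed-point set of the partially applied map $b\mapsto f(a,b)$ (measurable by Proposition~\ref{prop:csms}). The paper phrases the partial application as the composite $B\cong\{a\}\times B\to A\times B\To{f}B$ rather than verifying the section $\iota_a$ on rectangles, but that is only a difference in bookkeeping.
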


\begin{proof}

By Proposition~\ref{prop:csms}, the singleton $\{c\}\ss C$ is measurable, so the set $X_1\coloneqq g^{-1}(c)\ss B$ is measurable. Consider now the composite function 
\[B\To{\cong}\{a\}\times B\to A\times B\To{f} B.\]
It is the composition of measurable functions, so again by Proposition~\ref{prop:csms} its fixed point set $X_2=\{b\mid f(a,b)=b\}$ is measurable. Then $X=X_1\cap X_2$ is the intersection of measurable sets.
\end{proof}

\begin{definition}

Let $X\in\cat{W}$ be a box, and let $F=(S,\mu,\rdt{f},\upd{f})\in\MS(X)$ be a measurable system. For any $i\in\vinp{X}$ and $j\in\voutp{X}$, the set
\begin{equation}
\widetilde{M}_{i,j}=\left\{s\in S\mid \rdt{f}(s)=j, \quad \upd{f}(i,s)=s\right\}
\end{equation}
is measurable by Lemma~\ref{lemma:prep}. Thus we can define the \emph{matrix of steady states} of $F$ to be the $(\vinp{X}\times\voutp{X})$-matrix $\StstS(F)=M$ with $(i,j)$-entry defined by the measure 
\[M_{i,j}=\mu\left(\widetilde{M}_{i,j}\right).\]

\end{definition}

\begin{corollary}\label{cor:stst_compositional_MS}

The steady state mapping $\Stst\colon\MS\to\Mat$ is compositional, i.e., a monoidal natural transformation of $\cat{W}$-algebras.

\end{corollary}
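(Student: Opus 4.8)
The plan is to run the proof of Theorem~\ref{thm:stst_compositional} essentially verbatim, with the counting function $\#$ replaced throughout by the measure $\mu$ and with Lemma~\ref{lemma:prep} invoked wherever a steady-state set needs to be known to be measurable. As in that theorem, there are two things to verify: that for every wiring diagram $\varphi\colon X\to Y$ in $\cat{W}_{\Meas}$ the square relating $\MS(\varphi)$, $\Mat(\varphi)$ and $\Stst$ commutes, and that $\Stst$ is monoidal. The functoriality of $\MS$ itself is already Theorem~\ref{thm:MS_sym_mon_func}, so what remains is the naturality and monoidality of $\Stst$ as a transformation.

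For naturality, comparing Definition~\ref{def:MS_wiring} with Definition~\ref{def:DS_wiring} shows that the readout and update functions of $\MS(\varphi)(F)$ are given by exactly the same formulas as for the underlying discrete system of $F$; only the carried measure is extra data, and it is left unchanged by $\MS(\varphi)$. Hence, for $F=(S,\mu,\rdt{f},\upd{f})\in\MS(X)$ and fixed $i\in\vinp{Y}$, $j\in\voutp{Y}$, the set of $(i,j)$-steady states of $\MS(\varphi)(F)$ is
\[
\widetilde{M}'_{i,j}=\{ s\in S \mid \voutp{\varphi}(\rdt{f}(s))=j,\ \upd{f}(\vinp{\varphi}(i,\rdt{f}(s)),s)=s \},
\]
and, just as in the discrete case, it decomposes as the disjoint union $\widetilde{M}'_{i,j}=\bigsqcup_{k\in(\voutp{\varphi})^{-1}(j)}\widetilde{M}_{\vinp{\varphi}(i,k),k}$: the pieces are disjoint because $s$ has a single readout value $\rdt{f}(s)$, and each piece is measurable by Lemma~\ref{lemma:prep}. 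Applying $\mu$ and using additivity gives $\mu(\widetilde{M}'_{i,j})=\sum_{k\in(\voutp{\varphi})^{-1}(j)}\mu(\widetilde{M}_{\vinp{\varphi}(i,k),k})$, which is precisely $\Mat(\varphi)(\Stst(F))_{i,j}$ by the formula~\eqref{eqn:Mat_formulas}; this is the asserted commuting square.

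For monoidality, by Remark~\ref{rem:MDS_parallel} the parallel composite $F_1\boxtimes F_2$ carries the product measure $\mu_1\otimes\mu_2$ on $S_1\times S_2$ and has the evident product readout and update functions. At the level of sets, exactly as in Theorem~\ref{thm:stst_compositional}, the $((i_1,i_2),(j_1,j_2))$-steady states of $F_1\boxtimes F_2$ are the rectangle $\widetilde{M}^1_{i_1,j_1}\times\widetilde{M}^2_{i_2,j_2}$ of the corresponding steady-state sets of $F_1$ and $F_2$. Invoking the defining property of the product measure (\cite{Fremlin_Measure_Theory}) gives $(\mu_1\otimes\mu_2)(\widetilde{M}^1_{i_1,j_1}\times\widetilde{M}^2_{i_2,j_2})=\mu_1(\widetilde{M}^1_{i_1,j_1})\cdot\mu_2(\widetilde{M}^2_{i_2,j_2})$, which is the $((i_1,i_2),(j_1,j_2))$-entry of $\Stst(F_1)\otimes\Stst(F_2)$ by~\eqref{eqn:matrix_tensor}.

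The step I expect to require genuine care — and hence the main obstacle — is the additivity used in the naturality argument: the index set $(\voutp{\varphi})^{-1}(j)$ coming from a dependent product can be uncountable (it ranges over the types of the output ports of $X$ that are fed back or left unused), whereas a measure is only guaranteed to be countably additive. One must therefore argue that the decomposition of $\widetilde{M}'_{i,j}$ meets only countably many of the pieces $\widetilde{M}_{\vinp{\varphi}(i,k),k}$ with positive measure, so that the remaining sum really is over a countable set — or otherwise identify the hypothesis on the measures or the port types under which the identity holds in $\RR_+$ in full generality; in the serial and splitting cases, and whenever the relevant port types are countable, no such issue arises. Once this point is settled, the rest is bookkeeping inherited directly from Theorems~\ref{thm:MS_sym_mon_func}~and~\ref{thm:stst_compositional}.
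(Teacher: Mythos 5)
Your proposal follows the paper's own route: the paper's entire proof of this corollary is the instruction to rerun Theorem~\ref{thm:stst_compositional} with the measure $\mu$ substituted for the count $\#$, which is exactly what you do, with Lemma~\ref{lemma:prep} supplying measurability and the product measure supplying monoidality. Where you go beyond the paper is in flagging the additivity step, and you are right to flag it: the decomposition of the steady-state set of $\MS(\varphi)(F)$ is indexed by the fiber $(\voutp{\varphi})^{-1}(j)\subseteq\voutp{X}$, which may be uncountable, whereas $\mu$ is only countably additive. The paper's one-line proof is silent on this point.

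However, your first proposed repair---showing that only countably many pieces carry positive measure---is not sufficient, because uncountably many null pieces can union to a non-null set. Concretely, let $X$ have one output port of type $[0,1]$, take $S=[0,1]$ with Lebesgue measure, $\rdt{f}=\id$, and $\upd{f}(a,s)=s$, and let $\varphi\colon X\to Y$ discard the output, so that $\outp{Y}=0$ and $(\voutp{\varphi})^{-1}(\ast)=[0,1]$. Each piece $\widetilde{M}_{\vinp{\varphi}(i,k),k}$ is contained in the singleton $\{k\}$ and hence is null, so $\Mat(\varphi)\big(\Stst(F)\big)_{i,\ast}=\sum_{k\in[0,1]}0=0$ in $\RR_+$; but $\MS(\varphi)(F)$ has identity update and constant readout, so $\Stst\big(\MS(\varphi)(F)\big)_{i,\ast}=\mu([0,1])=1$. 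Thus the naturality square genuinely fails without a further hypothesis---for instance that the output types of $X$ are countable (so every fiber of $\voutp{\varphi}$ is countable and countable additivity applies), or that the pushforward of $\mu$ along $\rdt{f}$, restricted to each fixed-point set, is purely atomic. Note that no such issue arises in the discrete Theorem~\ref{thm:stst_compositional}, since the counting measure is additive over arbitrary disjoint unions in $\NN_+$. So your proof is as complete as the paper's, and your instinct about where the difficulty lies is correct, but the resolution requires restricting the statement rather than refining the countability argument you sketch.
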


\begin{proof}

The proof is very similar to that of Theorem~\ref{thm:stst_compositional}, with the measure $\mu$ substituted for the count $\#$.
\end{proof}

The same idea works for continuous dynamical systems. 

\begin{corollary}\label{cor:CS_steady_state}

The steady state mapping for continuous dynamical systems, as in Definition~\ref{def:steady_state_CDS}, is compositional, and for any $\epsilon>0$ the following diagram of $\cat{W}$-algebras commutes and is natural in $X$:
\[
\begin{tikzcd}[column sep=0em]
	\CS(X)\ar[dr,"\Stst(X)"']\ar[rr,"\Appx(X)"]&&\DS(X)\ar[dl,"\Stst(X)"]\\
	&\Mat(X)
\end{tikzcd}
\]

\end{corollary}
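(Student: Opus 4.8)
The plan is to reduce the corollary to the two compositionality results already in hand, Theorem~\ref{thm:approximation_CSDS} and Theorem~\ref{thm:stst_compositional}, via the observation that Euler's method preserves equilibria exactly. First I would prove the pointwise statement, i.e.\ that the displayed triangle commutes on inhabitants: given $F=(S,\rdt{f},\dyn{f})\in\CS(X)$ and any $\epsilon>0$, the discrete system $\Appx(F)=(S,\rdt{f},\upd{f}_\epsilon)$ has the same state set and the same readout as $F$, and its update satisfies $\upd{f}_\epsilon(i,s)=s+\epsilon\cdot\dyn{f}(i,s)=s$ if and only if $\epsilon\cdot\dyn{f}(i,s)=0$, which—since $\epsilon\neq 0$ and the tangent space $TS_s$ is a real vector space—holds if and only if $\dyn{f}(i,s)=0$. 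Comparing Definitions~\ref{def:steady_state_DDS} and~\ref{def:steady_state_CDS}, this yields $\StstS(\Appx(F))_{i,j}=\StstS(F)_{i,j}$ for all $i\in\vinp{X}$, $j\in\voutp{X}$, hence an equality of matrices $\Stst(F)=\Stst(\Appx(F))\in\Mat(X)$. Thus the continuous steady state map $\Stst\colon\CS(X)\to\Mat(X)$ is independent of the choice of $\epsilon$ and factors, for every box $X$, as $\Stst(X)=\Stst(X)\circ\Appx(X)$; this is exactly the commutativity of the triangle.

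Next I would deduce compositionality of $\Stst\colon\CS\to\Mat$ from this factorization. Theorem~\ref{thm:approximation_CSDS} gives that $\Appx\colon\CS\to\DS$ is a monoidal natural transformation of $\cat{W}$-algebras (along the forgetful functor $\cat{W}_U\colon\cat{W}_{\Euc}\to\cat{W}_{\Set}$ induced by $U\colon\Euc\to\Set$), and Theorem~\ref{thm:stst_compositional} gives the same for $\Stst\colon\DS\to\Mat$. Since a vertical composite (pasting) of monoidal natural transformations is again one, the composite $\Stst\circ\Appx\colon\CS\to\Mat$ is compositional, and by the previous paragraph this composite is precisely the continuous steady state mapping. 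Unwinding: for a wiring diagram $\varphi\colon X\to Y$, the naturality square for the continuous $\Stst$ is obtained by stacking the naturality square of $\Appx$ on top of that of the discrete $\Stst$, and the monoidality constraint squares stack analogously; each constituent square commutes by the cited theorems, so the pasted square commutes. This establishes both assertions: compositionality of the continuous steady state mapping, and commutativity—naturally in $X$—of the triangle with $\Appx(X)$.

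I expect no serious obstacle here; the argument is essentially formal bookkeeping once the pointwise identification of equilibria is in place. The one point requiring a little care is precisely that identification, $\epsilon\cdot v=0\iff v=0$ in $TS_s$, which is where the hypothesis $\epsilon>0$ enters and which is exactly what makes the $\epsilon$-approximation share its steady states with the original parameterized vector field; everything else follows by composing Theorems~\ref{thm:approximation_CSDS} and~\ref{thm:stst_compositional} and using that composites of monoidal natural transformations are monoidal natural transformations.
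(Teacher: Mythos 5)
Your proposal is correct and follows essentially the same route as the paper: establish the pointwise commutativity of the triangle from the observation that $s+\epsilon\cdot\dyn{f}(i,s)=s$ iff $\dyn{f}(i,s)=0$ for $\epsilon>0$, then obtain compositionality of the continuous steady state map by composing the monoidal natural transformations of Theorems~\ref{thm:approximation_CSDS} and~\ref{thm:stst_compositional}.
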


\begin{proof}

If the diagram commutes for any $X$, then clearly $\Stst\colon\CS\to\Mat$ is compositional by Theorems~\ref{thm:approximation_CSDS}~and~\ref{thm:stst_compositional}. To see that it commutes for any $X$, we simply appeal to Definitions~\ref{def:steady_states_matrix},~\ref{def:steady_state_CDS},~and~\ref{def:epsilon_approx}. That is, $s\in S$ is a steady state for the $\epsilon$-approximation $\Appx(f)$ of $f$ at input $x$ when 
\[s=\upd{f}_\epsilon(x,s)=s+\epsilon\cdot\dyn{f}(x,s)\]
Since $\epsilon>0$, this equation holds if and only if $\dyn{f}(x,s)=0$, i.e., when $s$ is a steady state of $f$. 
\end{proof}

\subsection{Remembering, rather than measuring, the steady states}

Above, we counted the number of steady states, but it is often useful to keep track of the steady states themselves. For this we provide another algebra on $\cat{W}_\Set$---i.e., a sixth "intrepretation"---whose formulas will look very much like those of matrices. 

\begin{definition}\label{def:Lambda_box}

Let $A, B\in\Set$ be sets. We define an \emph{$(A,B)$-matrix of sets} to be a function $M\colon A\times B\to\Set$; it assigns to each pair $(a,b)$ a set $M_{a,b}$.

If $X=(\inp{X},\outp{X})\in\cat{W}_\Set$ is a $\Set$-box, define $\MatS(X)$ to be the set of $(\vinp{X}, \voutp{X})$-matrices of sets: 
\[\MatS(X)\coloneqq\left\{M\colon\vinp{X}\times\voutp{X}\to\Set\right\}.\]

\end{definition}

\begin{remark}

Compare Definition~\ref{def:Lambda_box} with Definition~\ref{def:Mat_box} for ordinary matrices; the difference is that here is that $\Set$ is a category-theoretic version of semiring, with addition given by disjoint union (denoted $\sqcup$) and mutliplication given by cartesian product. Similarly, it is useful to compare Definition~\ref{eqn:Mat_sets_parallel} with Definition~\ref{def:Mat_parallel} for parallel composition, and Definition~\ref{def:Lambda_wiring} with Definition~\ref{def:Mat_wiring} for wiring.

\end{remark}

\begin{definition}\label{eqn:Mat_sets_parallel}

Suppose we are given $M^1\in\MatS(X_1)$ and $M^2\in\MatS(X_2)$. Their \emph{parallel composition}, denoted $M^1\times M^2\in\MatS(X_1\boxplus X_2)$, is defined by
\[(M^1\times M_2)_{(i_1,j_1),(i_2,j_2)}\coloneqq M^1_{i_1,j_1}\times M^2_{i_2,j_2}\]
for any $(i_1,i_2)\in\vinp{X_1}\otimes\vinp{X_2}$ and $(j_1,j_2)\in\vinp{X_2}\times\voutp{X_2}$.

\end{definition}

\begin{definition}\label{def:Lambda_wiring}

Let $\varphi\colon X\to Y$ be a wiring diagram in $\Set$, and suppose that $M\in\MatS(X)$ is a $(\vinp{X}\times\voutp{X})$-matrix of sets. We define \emph{the $\MatS$-application of $\varphi$ to $M$}, denoted $N=\MatS(\varphi)(M)\in\MatS(Y)$ to be the $(\vinp{Y}\times\voutp{Y})$-matrix of sets with entries
\begin{equation}\label{eqn:Mat_sets_formulas}
N_{i,j}\coloneqq\bigsqcup_{k\in(\voutp{\varphi})^{-1}(j)}M_{\vinp{\varphi}(i,k),k}
\end{equation}
for any $i\in\vinp{Y}$ and $j\in\voutp{Y}$.

\end{definition}

\begin{theorem}\label{thm:Mat_sets_sym_mon_func}

The assignments $X\mapsto\MatS(X)$ and $\varphi\mapsto\MatS(\varphi)$, with parallel composition as in Definition~\ref{eqn:Mat_sets_parallel} constitute a symmetric monoidal functor $\MatS\colon\cat{W}_\Set\to\Set$.

\end{theorem}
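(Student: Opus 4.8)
The plan is to mirror the proof of Theorem~\ref{thm:Mat_sym_mon_func} almost verbatim, observing that the formulas defining $\MatS$ are obtained from those defining $\Mat$ by replacing the semiring operations $(+,\cdot)$ of $R$ with the "categorified" operations $(\sqcup,\times)$ on $\Set$. The two things to check are functoriality---that for any $X\To{\varphi}Y\To{\psi}Z$ and $M\in\MatS(X)$ one has $\MatS(\psi)\big(\MatS(\varphi)(M)\big)=\MatS(\psi\circ\varphi)(M)$---and that the assignments are symmetric monoidal with respect to $\boxplus$ and the parallel composition of Definition~\ref{eqn:Mat_sets_parallel}.

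First I would do functoriality. Expanding the left-hand side using \eqref{eqn:Mat_sets_formulas} twice gives, for $i\in\vinp{Z}$ and $j\in\voutp{Z}$,
\[
\MatS(\psi)\big(\MatS(\varphi)(M)\big)_{i,j}
=\bigsqcup_{\ell\in(\voutp{\psi})^{-1}(j)}\;\bigsqcup_{k\in(\voutp{\varphi})^{-1}(\ell)}M_{\vinp{\varphi}\left(\vinp{\psi}(i,\ell),k\right),k}.
\]
A nested disjoint union $\bigsqcup_{\ell}\bigsqcup_{k}$ over $\ell\in(\voutp{\psi})^{-1}(j)$ and $k\in(\voutp{\varphi})^{-1}(\ell)$ is canonically isomorphic to a single disjoint union over the set of pairs $\{(\ell,k)\mid \voutp{\psi}(\ell)=j,\ \voutp{\varphi}(k)=\ell\}$, and projection to $k$ identifies this index set bijectively with $\big(\voutp{(\psi\circ\varphi)}\big)^{-1}(j)$ since $\voutp{(\psi\circ\varphi)}=\voutp{\psi}\circ\voutp{\varphi}$ (Lemma~\ref{lemma:technical}); under this identification $\ell=\voutp{\varphi}(k)$. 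Substituting and then applying the first formula of Lemma~\ref{lemma:technical}, $\vinp{\varphi}\big(\vinp{\psi}(i,\voutp{\varphi}(k)),k\big)=\vinp{(\psi\circ\varphi)}(i,k)$, collapses this to $\bigsqcup_{k\in(\voutp{(\psi\circ\varphi)})^{-1}(j)}M_{\vinp{(\psi\circ\varphi)}(i,k),k}=\MatS(\psi\circ\varphi)(M)_{i,j}$, as desired. One should also check the identity wiring diagram acts trivially, which is immediate: $\voutp{(\id_X)}=\id$, so the index set is a singleton $\{j\}$ and $\vinp{(\id_X)}(i,j)=i$.

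For the monoidal structure, I would repeat the monoidal computation from the proof of Theorem~\ref{thm:Mat_sym_mon_func}: for $\varphi_1\colon X_1\to X_1'$, $\varphi_2\colon X_2\to X_2'$, $M^1\in\MatS(X_1)$, $M^2\in\MatS(X_2)$, expand $\MatS(\varphi_1\boxplus\varphi_2)(M^1\times M^2)$ using \eqref{eqn:Mat_sets_formulas}, note that $\voutp{(\varphi_1\boxplus\varphi_2)}=\voutp{\varphi_1}+\voutp{\varphi_2}$ so its fiber over $(j_1,j_2)$ is the product of the fibers, that $\vinp{(\varphi_1\boxplus\varphi_2)}$ splits coordinatewise, and that disjoint union distributes over cartesian product ($\big(\bigsqcup_a P_a\big)\times\big(\bigsqcup_b Q_b\big)\cong\bigsqcup_{a,b}(P_a\times Q_b)$) to recover $\MatS(\varphi_1)(M^1)\times\MatS(\varphi_2)(M^2)$. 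Finally I would note that the coherence isomorphisms and the symmetry for $\MatS$ are inherited from those of $\boxplus$ in $\cat{W}_\Set$ together with the associativity/symmetry of $\sqcup$ and $\times$ on $\Set$, exactly as for $\Mat$.

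The main subtlety---rather than a genuine obstacle---is that $\MatS$ is only a \emph{pseudo}functor / lax monoidal functor on the nose, since reindexing a disjoint union along the bijections above produces canonical isomorphisms rather than strict equalities; as elsewhere in the paper (e.g. the treatment of $\CS$ in Definition~\ref{def:CDS_parallel} and Theorem~\ref{thm:CS_sym_mon_func}) we work up to these canonical isomorphisms and suppress them in the notation, which is harmless because all the relevant coherence diagrams commute. Modulo that bookkeeping, every step is a routine translation of the $\Mat$ proof with $(\sqcup,\times)$ in place of $(+,\cdot)$.
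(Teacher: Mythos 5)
Your proposal is correct and follows the same route as the paper, which proves this theorem simply by pointing back to the proof of Theorem~\ref{thm:Mat_sym_mon_func} and observing that the only change is replacing $\sum$ with $\bigsqcup$ (compare \eqref{eqn:Mat_formulas} and \eqref{eqn:Mat_sets_formulas}). You spell out the reindexing of the disjoint union via Lemma~\ref{lemma:technical} and the distributivity used in the monoidal step, and your closing caveat that the equalities of sets are really canonical isomorphisms is a fair point that the paper silently suppresses.
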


\begin{proof}

If $X\To{\varphi}Y\To{\psi}Z$ are maps and $(S,M)\in\MatS(X)$ is a matrix of sets, the proof that $\MatS(\psi)\big(\MatS(\varphi)(M)\big)=\MatS(\psi\circ\varphi)(M)$ is similar to that of Theorem~\ref{thm:Mat_sym_mon_func}. The only difference is that $\sum$ is replaced by $\bigsqcup$; compare \eqref{eqn:Mat_formulas} and \eqref{eqn:Mat_sets_formulas}.
\end{proof}

\begin{definition}\label{def:steady_states_matrix_sets}

Let $X\in\cat{W}$ be a box, and let $F=(S,\rdt{f},\upd{f})\in\DS(X)$ be a discrete dynamical system. Its \emph{matrix of steady state-sets} is the $(\vinp{X}\times\voutp{X})$-matrix of sets, denoted $M=\StstS(F)\in\MatS(X)$, with $(i,j)$-entry defined by the set steady states
\begin{equation}\label{eqn:steady_states_set}
M_{i,j}\coloneqq\left\{s\in S\mid \rdt{f}(s)=j, \quad \upd{f}(i,s)=s\right\}
\end{equation}
for $i\in\vinp{X}, j\in\voutp{X}$.
The above can be repeated with $\CS$ in place of $\DS$, using Definition~\ref{def:steady_state_CDS}.
\end{definition}

\begin{theorem}\label{thm:steady_states_matrix_sets}

The steady state-set mapping $\StstS\colon\DS\to\MatS$ is compositional, i.e., a monoidal natural transformation of $\cat{W}$-algebras, as is the count function $\#\colon\MatS\to\Mat$, and the following diagram of $\cat{W}_{\Set}$-algebras commutes:
\[
\begin{tikzcd}
	\DS\ar[r,"\StstS"]\ar[dr,"\Stst"']&\MatS\ar[d,"\#"]\\
	&\Mat
\end{tikzcd}
\]
The same holds with $\DS$ replaced by $\CS$.

\end{theorem}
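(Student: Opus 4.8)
The plan is to verify the four assertions in turn, each of which reduces to a rewriting of an earlier result. Compositionality of $\StstS\colon\DS\to\MatS$ will follow the pattern of Theorems~\ref{thm:Mat_sym_mon_func}~and~\ref{thm:stst_compositional}, with the formal disjoint union $\bigsqcup$ of $\MatS$ in place of the sum $\sum$ of $\Mat$ and the actual steady-state sets in place of their counts; compositionality of $\#\colon\MatS\to\Mat$ will follow from the additivity and multiplicativity of counts recorded in Section~\ref{sec:notation_background}; the commuting triangle is immediate from the definitions; and the continuous case will be reduced to the discrete case via $\Appx$.

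First I would establish naturality of $\StstS\colon\DS\to\MatS$: for a wiring diagram $\varphi\colon X\to Y$ and $F=(S,\rdt{f},\upd{f})\in\DS(X)$, unwinding Definitions~\ref{def:Lambda_wiring}~and~\ref{def:steady_states_matrix_sets} gives
\[
\MatS(\varphi)\bigl(\StstS(F)\bigr)_{i,j}=\bigsqcup_{k\in(\voutp{\varphi})^{-1}(j)}\bigl\{s\in S\mid\rdt{f}(s)=k,\ \upd{f}(\vinp{\varphi}(i,k),s)=s\bigr\},
\]
while unwinding Definitions~\ref{def:DS_wiring}~and~\ref{def:steady_states_matrix_sets} gives
\[
\StstS\bigl(\DS(\varphi)(F)\bigr)_{i,j}=\bigl\{s\in S\mid\voutp{\varphi}(\rdt{f}(s))=j,\ \upd{f}(\vinp{\varphi}(i,\rdt{f}(s)),s)=s\bigr\}.
\]
The point --- the very disjointness used in the middle equality of Theorem~\ref{thm:stst_compositional} --- is that the sets indexed by $k$ are pairwise disjoint subsets of $S$, since each $s$ determines $\rdt{f}(s)$; hence the formal disjoint union may be realized as the honest union in $S$, which is exactly the second set. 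Functoriality for $X\To{\varphi}Y\To{\psi}Z$ then reproduces the computation of Theorem~\ref{thm:Mat_sym_mon_func} line for line, using Lemma~\ref{lemma:technical} together with the reindexing $\bigsqcup_{\ell}\bigsqcup_{k}\cong\bigsqcup_{(k,\ell)}$ in place of $\sum_{\ell}\sum_{k}=\sum_{(k,\ell)}$; and monoidality is as in Theorem~\ref{thm:stst_compositional}, since $\StstS(F_1\boxtimes F_2)_{(i_1,i_2),(j_1,j_2)}$ is the set of pairs that are simultaneously $(i_1,j_1)$- and $(i_2,j_2)$-steady, namely $\StstS(F_1)_{i_1,j_1}\times\StstS(F_2)_{i_2,j_2}$.

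Next, the count map $\#\colon\MatS\to\Mat$ is a monoidal natural transformation because applying $\#$ to \eqref{eqn:Mat_sets_formulas} and using $\#\bigl(\bigsqcup_k A_k\bigr)=\sum_k\#A_k$ (additivity of counts, with $\infty$ absorbing) recovers \eqref{eqn:Mat_formulas}, and applying $\#$ to Definition~\ref{eqn:Mat_sets_parallel} and using $\#(A\times B)=\#A\cdot\#B$ recovers \eqref{eqn:matrix_tensor}. The triangle $\#\circ\StstS=\Stst$ then holds by construction: the $(i,j)$-entry of $\StstS(F)$ is the steady-state set $\{s\in S\mid\rdt{f}(s)=j,\ \upd{f}(i,s)=s\}$ of Definition~\ref{def:steady_states_matrix_sets}, whose count is by definition the $(i,j)$-entry of $\Stst(F)$ (Definition~\ref{def:steady_states_matrix}). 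For the continuous case I would note, exactly as in the proof of Corollary~\ref{cor:CS_steady_state}, that for any $\epsilon>0$ a state is an $(i,j)$-steady state of a continuous system $F$ if and only if it is one of its $\epsilon$-approximation $\Appx(F)$ (the equation $s=s+\epsilon\cdot\dyn{f}(i,s)$ is equivalent to $\dyn{f}(i,s)=0$); hence $\StstS\colon\CS\to\MatS$ equals the composite of $\Appx\colon\CS\to\DS$ with $\StstS\colon\DS\to\MatS$, which is compositional by Theorem~\ref{thm:approximation_CSDS} and the $\DS$ case, and the triangle for $\CS$ follows from the $\DS$ triangle by the same identification. (Equivalently, one repeats the displayed computations verbatim with $\dyn{f}(i,s)=0$ in place of $\upd{f}(i,s)=s$; the disjointness argument is unaffected, as it concerns only the readout.)

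The only step requiring genuine care, and it recurs at every stage, is the justification that the families of subsets of the state space appearing under $\bigsqcup$ are truly pairwise disjoint, so that the formal coproduct in $\MatS$ coincides with an actual subset of $S$ and the identities hold strictly rather than merely up to a choice of bijection. Since this is precisely the observation already isolated in Theorem~\ref{thm:stst_compositional}, I do not anticipate a real obstacle; the remaining work is purely organizational.
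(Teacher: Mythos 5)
Your proof is correct and follows essentially the same route as the paper: the triangle commutes by comparing \eqref{eqn:steady_states} with \eqref{eqn:steady_states_set}, compositionality of $\StstS$ repeats Theorem~\ref{thm:stst_compositional} with $\sum$ replaced by $\bigsqcup$, $\#$ is monoidal because counts preserve disjoint unions and products, and the $\CS$ case reduces to $\DS$ via Corollary~\ref{cor:CS_steady_state}. Your explicit attention to the disjointness of the $k$-indexed sets (so the formal coproduct is an honest union in $S$) is a point the paper leaves implicit, and is worth making.
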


\begin{proof}

Recalling Equations~\eqref{eqn:steady_states}~and~\eqref{eqn:steady_states_set}, it is clear that for any $X\in\cat{W}$, the above diagram commutes at $X$. It remains to show that $\StstS$ and $\#$ are monoidal natural transformations. The proof that $\StstS$ is monoidal is almost identical to the proof of Theorem~\ref{thm:stst_compositional} (which says $\Stst$ is monoidal), except with $\sum$ replaced by $\bigsqcup$. It is easy to show that $\#\colon\MatS\to\Mat$ is a monoidal transformation; for example one invokes the fact that count preserves products, $\#A\cdot \#B=\#(A\times B)$. 

The results for $\MS$ and $\CS$ follow similarly, from Corollary~\ref{cor:CS_steady_state}.
\end{proof}

\begin{example}

We repeat Example~\ref{ex:series_DS_matrix}, except with state set matrices rather than merely their counts. Recall that two dynamical systems are put into series, the first of which comes from Example~\ref{ex:discrete_system}. The state-set matrix for $X_1$ and $X_2$, as well as their product, are below
\[
\left(
\begin{array}{ccc}
\{2\}&\emptyset&\emptyset\\
\emptyset&\{1,4\}&\emptyset
\end{array}
\right)
\left(
\begin{array}{cc}
\{p\}&\emptyset\\
\{p,r\}&\emptyset\\
\emptyset&\{q\}
\end{array}
\right)
=
\left(
\begin{array}{cc}
\{(2,p)\}&\emptyset\\
\{(1,p),(1,r),(4,p),(4,r)\}&\emptyset
\end{array}
\right)
\]

\end{example}

\subsection{Linear stability}\label{sec:linear_stability}

As discussed in Remark~\ref{rem:bifurcation}, what we have been calling "steady state matrices" are related to bifurcation diagrams; they are more expressive in some respects and less expressive in others. They are more expressive in that we allow arbitrary readout functions, and consider the number of fixed points for each parameter and readout value; this is a very mild generalization. Conversely, steady state matrices are much less expressive than birufcation diagrams in that they do not express stability properties. In this section, we briefly discuss how to classify the linear stability of the steady states for an interconnected system $Y$ of dynamical systems, in terms of the stability properties of the individual components $X_1,\ldots,X_p$.%
\footnote{We do not discuss other sorts of stability (e.g., Lyopunov) or any more detailed classification of fixed points here, prefering to leave those for a more advanced study.}

The problem is that doing so is impossible if one's understanding of "classifying linear stability" is too narrow. For open continuous dynamical systems as shown here:
\begin{equation*}
\begin{aligned}
	\dot{x}_1&=f_1(a_1,\ldots,a_k,x_1,\ldots,x_n),\\
	\dot{x}_2&=f_2(a_1,\ldots,a_k,x_1,\ldots,x_n),\\
	&\vdots\\
	\dot{x}_n&=f_n(a_1,\ldots,a_k, x_1,\ldots,x_n),
\end{aligned}
\qquad\qquad
\begin{aligned}
	b_1&=g_1(x_1,\ldots,x_n),\\
	b_2&=g_2(x_1,\ldots,x_n),\\
	&\vdots\\
	b_\ell&=f_\ell(x_1,\ldots,x_n),
\end{aligned}
\end{equation*}
it is not enough to know the Jacobian of $f$, where the parameters $a_1,\ldots,a_k$ are held constant. Instead, one must remember three Jacobians: $\partials{f}{a}$, $\partials{f}{x}$, and $\partials{g}{x}$ for each subsystem. By \emph{generalized bifurcation diagram} we mean the steady state matrix, together with these three Jacobians at each steady state. We will show that given the generalized bifurcation diagram for each dynamical system in a coupled network, we can recover the generalized bifurcation diagram for the interconnected system. To make this precise, we will define one more algebra on $\cat{W}$, but first a brief example.

\begin{example}\label{ex:one_matrix_not_enough}
We will put a dynamical system into $X$, and interconnect it as shown right, to form a dynamical system in box $Y$.
\begin{equation}\label{dia:wd_for_conway}
\begin{tikzpicture}[oriented WD, baseline=(dom.center), bbx=1em, bby=1ex]
	\node[bb={2}{1}, bb name=$X$] (dom) {};
	\draw[label] 
		node[left=2pt of dom_in2]{$A$}
		node[left=2pt of dom_in1] {$B_1$}
		node[right=2pt of dom_out1]{$B_2$};
\end{tikzpicture}
\qquad\qquad
\begin{tikzpicture}[oriented WD, baseline=(dom.center), bbx=1em, bby=1ex]
	\node[bb={2}{1}, bb name=$X$] (dom) {};
	\node[bb={1}{1}, fit={(dom) ($(dom.north east)+(1,4)$) ($(dom.south west)-(1,2)$)}, bb name = $Y$] (cod) {};
	\draw[ar,pos=20] (cod_in1') to (dom_in2);
	\draw[pos=2] (dom_out1) to (cod_out1');
	\draw[ar] let \p1=(dom.north east), \p2=(dom.north west), \n1={\y2+\bby}, \n2=\bbportlen
	in (dom_out1) to[in=0] (\x1+\n2,\n1) -- (\x2-\n2,\n1) to[out=180] (dom_in1);
	\draw[label] 
		node[below left=2pt and 3pt of dom_in2]{$A$}
		node[above left=4pt and 6pt of dom_in1] {$B$}
		node[below right=2pt and 3pt of dom_out1]{$B$};
\end{tikzpicture}
\end{equation}
We suppose a 1-dimensional state space, $S=\RR$, with dynamics $\dot{x}=2x-3b_1+a$ and readout map $b_2=x$. Note that for any constant input parameter $(a,b)$, the Jacobian is the $1\times 1$-matrix $(2)$, which has a positive real eigenvalue of 2, so the equilibrium at $x=\frac{3b_1-a}{2}$ is unstable. By Definition~\ref{def:CS_wiring}, the interconnected system formed in $Y$ will have dynamics $\dot{x}=-x+a$ and readout map $b=x$. This system is stable for any constant input, because the Jacobian is $(-1)$. 

Thus, when systems are going to be interconnected, it is not enough to remember the Jacobian of the individual systems as though their parameters were constant. Indeed, when one system's output is fed into another system as input, the parameters in the second are dependent on the readout---and thus on the state evolution---of the first. It follows that we need to keep around how the readout changes with respect to the state, and how the state changes with respect to the input. 

\end{example} 

The way one typically approaches a dynamical system is to find the steady states (equilibria) and linearize around them. We have developed a matrix-arithmetic approach that tells us the steady states of an interconnected system $Y$ of dynamical systems $X_1,\ldots,X_n$, given the steady states of each $X_i$; this is the content of Theorem~\ref{thm:stst_compositional}. Now we want to understand the linear characteristics of each steady state in $Y$ given the linear characteristics of their contributors in the $X_i$; however, as we showed in Example~\ref{ex:one_matrix_not_enough}, we need to keep around extra data. Our goal has become to explain what extra data must be assigned to each steady state at the local level in order to produce the same data again at the global level.

We begin with an abstract construction, which says that for any interpretation $Q$ of wiring diagrams, one can form matrices of steady states, each of which is assigned an element of $Q$. The only time we will actually use this is when $Q=\LS$ is the linear system interpretation.

Recall that for any set $P$, an object of $\Set_{/P}$---called the \emph{slice category of $\Set$ over $P$}---is a pair $(X,p)$, where $X$ is a set and $p\colon X\to P$ is a function. 

\begin{definition}\label{def:sets_of_Q}

Let $Q\colon\cat{W}\to\Set$ be a symmetric monoidal functor. For any box $X=(\inp{X},\outp{X})\in\cat{W}$, define an \emph{$X$-matrix of $Q(X)$'s}, denoted $\MatS_{Q}(X)$ to be a function $A\times B\to\Set_{/Q(X)}$; it assigns to each pair $(a,b)\in\vinp{X}\times\voutp{X}$ a set $M_{a,b}$, called the \emph{underlying matrix}, and a function $m_{a,b}\colon M_{a,b}\to Q(X)$, called the \emph{$Q$-assignment for $M$}.

Given $(M_1,m_1)\in\MatS_{Q}(X_1)$ and $(M_2,m_2)\in\MatS_{Q}(X_2)$, the underlying matrix for the parallel composition is the parallel composition $(M_1\otimes M_2)\in\MatS(X_1\boxplus X_2)$ of the underlying matrices (as in Definition~\ref{eqn:Mat_sets_parallel}). The $Q$-assignment is given by composing with the parallel composition in $Q$. That is, for any $(a_1,b_1)\in\vinp{X_1}\times\voutp{X_1}$ and $(a_2,b_2)\in\vinp{X_2}\times\voutp{X_2}$, use the composite
\[
(M_1\otimes M_2)_{(a_1,b_1),(a_2,b_2)}=(M_1)_{a_1,b_1}\times (M_2)_{a_2,b_2}\To{m_{a_1,b_1}\times m_{a_2,b_2}}Q(X_1)\times Q(X_2)\to Q(X_1\boxplus X_2)
\]
where the last map is parallel composition in $Q$.

Suppose that $\varphi\colon X\to Y$ is a wiring diagram, and that $(M,m)\in\MatS_{Q}(X)$ is an $X$-matrix of $Q(X)$'s. We have defined a matrix $N\coloneqq\MatS(\varphi)(M)$ in Definition~\ref{def:Lambda_wiring}. By the formula \eqref{eqn:Mat_sets_formulas} shown there, for any $(i,j)\in\vinp{Y}\times\voutp{Y}$, the set $N_{i,j}$ is the coproduct of various sets of the form $M_{i',j'}$, each of which comes with a function $m_{i',j'}\colon M_{i',j'}\to Q(X)$. By the universal property of coproducts, these maps induce a unique map $N_{i,j}\to Q(X)$, and we let $n_{i,j}$ be the composite $N_{i,j}\to Q(X)\To{Q(\varphi)}Q(Y)$.

\end{definition}

Given our work above, e.g., the proof of Theorem~\ref{thm:Mat_sets_sym_mon_func}, the following theorem becomes straightforward.
\begin{theorem}

For any symmetric monoidal functor $Q\colon\cat{W}\to\Set$, the construction of $\MatS_Q$ given in Definition~\ref{def:sets_of_Q} constitutes a symmetric monoidal functor $\MatS_Q\colon\cat{W}\to\Set$.

\end{theorem}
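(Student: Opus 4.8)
The plan is to verify, essentially by bookkeeping, that the two conditions defining a symmetric monoidal functor---functoriality (compatibility with composition of wiring diagrams) and monoidality (compatibility with $\boxplus$ and $\boxtimes$, together with unit coherence and symmetry)---hold for $\MatS_Q$, reducing each to the corresponding fact already proved for $\MatS$ (Theorem~\ref{thm:Mat_sets_sym_mon_func}) plus the functoriality and monoidality of $Q$, using the universal property of coproducts to handle the $Q$-assignments. First I would fix wiring diagrams $X\xrightarrow{\varphi}Y\xrightarrow{\psi}Z$ and an object $(M,m)\in\MatS_Q(X)$, and show $\MatS_Q(\psi)\bigl(\MatS_Q(\varphi)(M,m)\bigr)=\MatS_Q(\psi\circ\varphi)(M,m)$. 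The underlying matrices agree by Theorem~\ref{thm:Mat_sets_sym_mon_func} applied to $\MatS$, so the content is that the two $Q$-assignments $N_{i,j}\to Q(Z)$ agree. Unwinding Definition~\ref{def:sets_of_Q}, the left-hand assignment on a summand $M_{i',j'}$ is $M_{i',j'}\xrightarrow{m_{i',j'}}Q(X)\xrightarrow{Q(\varphi)}Q(Y)\xrightarrow{Q(\psi)}Q(Z)$, while the right-hand one is $M_{i',j'}\xrightarrow{m_{i',j'}}Q(X)\xrightarrow{Q(\psi\circ\varphi)}Q(Z)$; these coincide precisely because $Q$ is a functor, $Q(\psi)\circ Q(\varphi)=Q(\psi\circ\varphi)$. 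Since the two induced maps out of the coproduct $N_{i,j}$ agree on every summand, they are equal by the universal property. Unitality (the identity wiring diagram acts as the identity) follows the same way from $Q(\id_X)=\id_{Q(X)}$ and the corresponding fact for $\MatS$.

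Next I would check monoidality. Given $(M_1,m_1)\in\MatS_Q(X_1)$ and $(M_2,m_2)\in\MatS_Q(X_2)$, Definition~\ref{def:sets_of_Q} specifies that the underlying matrix of the parallel composite is $M_1\otimes M_2$ (Definition~\ref{eqn:Mat_sets_parallel}) and the $Q$-assignment is $(M_1)_{a_1,b_1}\times(M_2)_{a_2,b_2}\xrightarrow{m_{a_1,b_1}\times m_{a_2,b_2}}Q(X_1)\times Q(X_2)\to Q(X_1\boxplus X_2)$, the last map being the lax structure map of $Q$. What must be shown is that for composable wiring diagrams $\varphi_i\colon X_i\to Y_i$ the square relating $\MatS_Q(\varphi_1)\otimes\MatS_Q(\varphi_2)$ to $\MatS_Q(\varphi_1\boxplus\varphi_2)$ commutes, and that the unit $\square$ is sent to a (terminal) one-point matrix compatibly. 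For the underlying matrices this is again Theorem~\ref{thm:Mat_sets_sym_mon_func}; for the $Q$-assignments it is exactly the statement that $Q$'s lax structure map is natural with respect to $Q(\varphi_1)\times Q(\varphi_2)$ versus $Q(\varphi_1\boxplus\varphi_2)$, i.e., the lax monoidality coherence of $Q$. One also checks the associativity and unit coherence diagrams for $\MatS_Q$, which reduce to those for $\MatS$ together with those for $Q$; and symmetry, which reduces to the symmetry of $\MatS$ (swapping the two tensor factors of sets) together with the symmetry of $Q$.

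The bulk of the argument is therefore routine: every identity is forced, summand by summand, by an already-established identity for $\MatS$ on underlying matrices combined with a corresponding functoriality or monoidality identity for $Q$, and the universal property of coproducts transfers the summand-level equalities to the claimed equalities of maps $N_{i,j}\to Q(-)$. The main obstacle, such as it is, is purely notational: keeping straight, in the functoriality proof, that the indexing sets over which the coproduct $N_{i,j}$ is formed---the fibers $(\voutp{\psi\circ\varphi})^{-1}(j)$ versus the iterated fibers $(\voutp{\psi})^{-1}(j)$ and $(\voutp{\varphi})^{-1}(\ell)$---are identified compatibly with the way the $Q$-assignments are pieced together, so that "agree on each summand" genuinely makes sense. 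This is precisely the identification already carried out in the proof of Theorem~\ref{thm:Mat_sym_mon_func} (via Lemma~\ref{lemma:technical}), so one can cite it rather than redo it. No step requires an idea not already present in the excerpt, which is why, as the paper says, "the following theorem becomes straightforward."
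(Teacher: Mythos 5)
Your proposal is correct and follows exactly the route the paper intends: the paper states this theorem with no written proof, only the remark that it is straightforward given the proof of Theorem~\ref{thm:Mat_sets_sym_mon_func}, and your argument supplies precisely the missing bookkeeping (underlying matrices handled by $\MatS$, the $Q$-assignments by functoriality and lax monoidality of $Q$ via the universal property of coproducts). No discrepancy to report.
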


\paragraph{Linearizing around steady states is a compositional mapping.} We will now give a compositional mapping $\CS\to\MatS_\LS$. That is, for any continuous dynamical system $F\in\CS(X)$, we will assign a matrix of steady state-sets $\StstS(F)$ as in Definition~\ref{def:steady_states_matrix_sets}, and each steady state $s_0$ will be assigned an open linear dynamical system on $X$, namely the derivative of $F$ at $s_0$. The fact that this is compositional means that if we keep track of the generalized bifurcation diagram (steady states depending on parameters, and the respective Jacobians) for each continuous dynamical system in a coupled network, we can recover the generalized bifurcation diagram for the interconnected system.

\begin{definition}\label{def:stst_linearization}

Let $X\in\cat{W}$ be a box, and let $F=(S,\rdt{f},\dyn{f})\in\CS(X)$ be a continuous system. For any $i\in\vinp{X}$ and $j\in\voutp{X}$, let $M=\StstS(F)$ be the matrix of steady state-sets of $F$, as in Definition~\ref{def:steady_states_matrix_sets}. We define the \emph{linearization assignment} to be the function $D_{i,j}\colon M_{i,j}\to\LS(X)$ which assigns to each steady state $s_0\in M_{i,j}$ the linear system $\big(T(s_0),\inp{D_{i,j}}(s_0),\midp{D_{i,j}}(s_0),\outp{D_{i,j}}(s_0)\big)\in\LS(X)$ given by taking various derivatives, as follows. Let $T(s_0)\coloneqq S$ and put:
\begin{equation}\label{eqn:derivatives}
\inp{D_{i,j}}(s_0)\coloneqq\partial_{\inp{X}}\dyn{f}(i, s_0),\qquad
\midp{D_{i,j}}(s_0)\coloneqq\partial_{S}\dyn{f}(i, s_0),\qquad
\outp{D_{i,j}}(s_0)\coloneqq\partial_{S}\rdt{f}(s_0).
\end{equation}
Note that these are all independent of $j$. We call the pair $(M, D)\in\MatS_{\LS}(X)$ the \emph{steady state linearization} of $F$, denoted $(M,D)=\StLin(F)$.
\end{definition}

\begin{example}

Let $\varphi\colon X\to Y$ be the wiring diagram shown in \eqref{dia:wd_for_conway}, and let $F\in\CS(X)$ be the dynamical system discussed there, with dynamics $\dyn{f}(a,b_1,x)=2x-3b_1+a$ and readout map $\rdt{f}=x$. The matrix of steady state sets $M$ is given by $\StstS(F)(b_1,a,b_2)=\{s\in\RR\mid 2s-3b_1+a = 0, s=b_2\}$. Taking derivatives as shown in \eqref{eqn:derivatives}, the linearization assignment $D$, at each $(b_1,a,b_2)$, is:
\[
\inp{D}(s)=\left(\begin{array}{ll}-3&1\end{array}\right),\qquad
\midp{D}(s)=\left(\begin{array}{l}2\end{array}\right),\qquad
\outp{D}(s)=\left(\begin{array}{l}1\end{array}\right).
\]
The matrices happen to be independent of $(b_1,a,b_2)$, so we dropped the subscripts. It is the middle matrix $\midp{D}=(2)$ that we discussed in Example~\ref{ex:one_matrix_not_enough}, which showed that $F$ is unstable at the fixed point.

The steady state matrix $\StstS(G)$ of the interconnected system $G\coloneqq\CS(\varphi)(F)$ can be computed as $N=\MatS(\varphi)(F)$, by Theorem~\ref{thm:Mat_sets_sym_mon_func}. Hence, we find that $N_{a,b}=\{s\in\RR\mid -s+a=0\}$. According to Definition~\ref{def:sets_of_Q}, the steady state linearization of $G$ is $(N,E)$, where $E$ is the linearization assignment $E_{a,b}\colon N_{a,b}\to\LS(Y)$ given by composing $D$ with $\LS(\varphi)$, as in Definition~\ref{def:LS_wiring}. To do so, we need to know the derivative of the wiring diagram, as in Definition~\ref{def:derivative_wd}, which is
\[
\inp{\Phi}=\left(\begin{array}{l}0\\1\end{array}\right),\qquad
\midp{\Phi}=\left(\begin{array}{l}1\\0\end{array}\right),\qquad
\outp{\Phi}=\left(\begin{array}{l}1\end{array}\right).
\] 
Thus we compute the linearization assignment $E$ (again dropping subscripts), by applying \eqref{eqn:LS_formulas}:
\begin{gather*}
\inp{E}(s)\coloneqq
\left(\begin{array}{ll}-3&1\end{array}\right)\left(\begin{array}{l}0\\1\end{array}\right)=\left(\begin{array}{l}1\end{array}\right)
\qquad
\outp{E}(s)\coloneqq
\left(\begin{array}{l}1\end{array}\right)\left(\begin{array}{l}1\end{array}\right)
=\left(\begin{array}{l}1\end{array}\right)
\\
\midp{E}(s)\coloneqq
\left(\begin{array}{l}2\end{array}\right)+\left(\begin{array}{ll}-3&1\end{array}\right)\left(\begin{array}{l}1\\0\end{array}\right)\left(\begin{array}{l}1\end{array}\right)
=\left(\begin{array}{l}-1\end{array}\right)
\end{gather*}
As we discussed in Example~\ref{ex:one_matrix_not_enough}, the middle matrix $\midp{E}=(-1)$ tells us that $\CS(\varphi)(F)$ is stable at its fixed point.

\end{example}

\begin{theorem}
Let $\LS\colon\cat{W}\to\Set$ be the algebra of linear systems as in Theorem~\ref{thm:LS_sym_mon_func}, and let $\MatS_{\LS}\colon\cat{W}\to\Set$ be the algebra of matrices of linear systems, as in Definition~\ref{def:sets_of_Q}. The steady state linearization $\StLin\colon\CS\to\MatS_{\LS}$ from Definition~\ref{def:stst_linearization} is compositional, i.e., it is a monoidal natural transformation of $\cat{W}$-algebras.

\end{theorem}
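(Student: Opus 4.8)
The plan is to verify the two conditions defining a monoidal natural transformation: (i) naturality in $\cat{W}$, i.e., for every wiring diagram $\varphi\colon X\to Y$ the square
\[
\begin{tikzcd}
\CS(X)\ar[r,"\CS(\varphi)"]\ar[d,"\StLin"']&\CS(Y)\ar[d,"\StLin"]\\
\MatS_{\LS}(X)\ar[r,"\MatS_{\LS}(\varphi)"']&\MatS_{\LS}(Y)
\end{tikzcd}
\]
commutes; and (ii) compatibility with the monoidal (parallel) structure, i.e., for boxes $X_1,X_2$ the expected square involving $\boxtimes$ and $\boxplus$ commutes. Each $\StLin(F)$ is a pair $(M,D)$ with $M=\StstS(F)$ the underlying matrix of steady-state sets and $D$ the linearization assignment of \eqref{eqn:derivatives}. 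Since Theorem~\ref{thm:steady_states_matrix_sets} already shows that $F\mapsto\StstS(F)$ is a monoidal natural transformation $\CS\to\MatS$, the underlying-matrix part of both conditions is done; what remains is to check that the $\LS$-assignment parts agree. So the real content is a bookkeeping statement about derivatives.

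For naturality, fix $\varphi\colon X\to Y$ and $F=(S,\rdt{f},\dyn{f})\in\CS(X)$, and let $G=\CS(\varphi)(F)=(S,\rdt{g},\dyn{g})$ with $\rdt{g},\dyn{g}$ as in \eqref{eqn:CS_formulas}. Going around the top-right of the square, a steady state $s_0\in\StstS(G)_{i,j}$ gets assigned the linearization of $G$ at $s_0$, namely the three derivatives $\partial_{\inp Y}\dyn{g}(i,s_0)$, $\partial_S\dyn{g}(i,s_0)$, $\partial_S\rdt{g}(s_0)$. Going the other way, $s_0$ is (via the identification of steady-state sets from Theorem~\ref{thm:steady_states_matrix_sets}) a steady state of $F$ for some $(i',j')$ with $\vinp\varphi(i,j')=i'$ and $\voutp\varphi(j')=j$, gets assigned the linearization of $F$ at $s_0$, and then this is pushed forward by $\LS(\varphi)$ via the formulas \eqref{eqn:LS_formulas}, using the wiring-diagram derivatives $(\inp\Phi,\midp\Phi,\outp\Phi)$ of Definition~\ref{def:derivative_wd}. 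The claim is that these two triples coincide, and this is exactly the chain rule applied to the composites $\dyn{g}(y,s)=\dyn{f}(\vinp\varphi(y,\rdt f(s)),s)$ and $\rdt g(s)=\voutp\varphi(\rdt f(s))$: differentiating the first in $\inp Y$ gives $\partial_{\inp X}\dyn f\cdot\partial_{\inp Y}\vinp\varphi=\inp D\cdot\inp\Phi=\inp N$; differentiating it in $S$ and using the product rule on the two occurrences of $s$ gives $\partial_S\dyn f+\partial_{\inp X}\dyn f\cdot\partial_{\outp X}\vinp\varphi\cdot\partial_S\rdt f=\midp D+\inp D\,\midp\Phi\,\outp D=\midp N$; and differentiating $\rdt g$ gives $\partial_{\outp X}\voutp\varphi\cdot\partial_S\rdt f=\outp\Phi\,\outp D=\outp N$. (Here one uses that $\dyn f(i',s_0)=0$, i.e.\ $s_0$ is a steady state, so that no boundary term from the chain rule survives; in fact these are linear-in-the-state-variable expansions, so the derivative formulas hold verbatim.) Thus the two assignments $M_{i,j}\to\LS(Y)$ agree, which is precisely the commutativity of the square, since $\MatS_{\LS}(\varphi)$ was defined (Definition~\ref{def:sets_of_Q}) by post-composing the $\LS$-assignment with $\LS(\varphi)$.

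For the monoidal condition, one compares $\StLin(F_1\boxtimes F_2)$ with the parallel composite of $\StLin(F_1)$ and $\StLin(F_2)$ in $\MatS_{\LS}$. On underlying matrices this is Theorem~\ref{thm:steady_states_matrix_sets} again. On the $\LS$-assignment, at a steady state $(s_1,s_2)\in\StstS(F_1\boxtimes F_2)=\StstS(F_1)\times\StstS(F_2)$ one must check that the linearization of $F_1\boxtimes F_2$ at $(s_1,s_2)$ equals the direct sum (parallel composition in $\LS$, Definition~\ref{def:LDS_parallel}) of the linearizations of $F_1$ at $s_1$ and $F_2$ at $s_2$; since $\dyn{(f_1\boxtimes f_2)}=\dyn{f_1}\times\dyn{f_2}$ and $\rdt{(f_1\boxtimes f_2)}=\rdt{f_1}\times\rdt{f_2}$ (up to the canonical reordering isomorphism), the Jacobian of a product map is the block-diagonal direct sum of the Jacobians, which is exactly what Definition~\ref{def:sets_of_Q} prescribes for the parallel composite of $\MatS_{\LS}$-data.

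The main obstacle is bookkeeping rather than conceptual: correctly tracking which variables the wiring-diagram maps $\vinp\varphi,\voutp\varphi$ depend on, and lining up the chain-rule expansion with the \emph{exact} form of the composition formula in Lemma~\ref{lemma:derivative_comp_wd}/Definition~\ref{def:LS_wiring}. The key sanity check is that the derivative formulas \eqref{eqn:LS_formulas} for $\LS(\varphi)$ were \emph{defined} to mirror the chain-rule computation (Lemma~\ref{lemma:derivative_comp_wd} is itself proved by the chain rule applied to Lemma~\ref{lemma:technical}), so once the identifications of state spaces and steady-state sets are made explicit, the two sides are forced to agree term by term, and no genuinely new estimate or construction is needed.
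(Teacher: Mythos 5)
Your proof is correct and follows essentially the same route as the paper's: split $\StLin(F)=(M,D)$ into the underlying matrix (already handled by Theorem~\ref{thm:steady_states_matrix_sets}) and the linearization assignment, then match the latter term-by-term via the chain rule against the formulas \eqref{eqn:LS_formulas} and \eqref{eqn:derivatives_wd}. You additionally spell out the monoidal (parallel-composition) square via block-diagonal Jacobians, which the paper's proof leaves implicit; your parenthetical about $\dyn{f}(i',s_0)=0$ suppressing a ``boundary term'' is unnecessary (the chain-rule identity holds at any point) but harmless.
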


\begin{proof}

Given a wiring diagram $\varphi\colon X\to Y$ and a continuous system $F\in\CS(X)$, we need to show that $\StLin\big(\CS(\varphi)(F)\big)=\LS(\varphi)\big(\StLin(F)\big)$. The underlying matrices of steady state sets are the same by Theorem~\ref{thm:steady_states_matrix_sets}, say $N\colon\vinp{Y}\times\voutp{Y}\to\Set$, where $N=\MatS(\varphi)(M)$ for $M=\StstS(F)\colon\vinp{X}\times\voutp{X}\to\Set$.

It remains to check that the linearization assignments are the same. Fix $i\in\vinp{Y}$ and $j\in\voutp{Y}$, and suppose that $s_0\in N_{i,j}$. Then by \eqref{eqn:Mat_sets_formulas}, there is some $i'\in\vinp{X}$ and $j'\in\voutp{X}$ such that $\voutp{\varphi}(j')=j$ and $\vinp{\varphi}(i,j')=i$, and $s_0\in M_{i',j'}$. Then by \eqref{eqn:derivatives}, \eqref{eqn:derivatives_wd}, and \eqref{eqn:LS_formulas} the linearization assignment $E$ of $\LS(\varphi)\big(\StLin(F)\big)$ at $s_0$ are 
\begin{gather*}
\inp{E}=\partial_{\inp{X}}\dyn{f}(i', s_0) \partial_{\inp{Y}}\vinp{\varphi},\qquad
\outp{E}=\partial_{\outp{X}}\voutp{\varphi} \partial_{S}\rdt{f}(s_0),\\
\midp{E}=\partial_{S}\dyn{f}(i', s_0)+\partial_{\inp{X}}\dyn{f}(i', s_0) \partial_{\outp{X}}\vinp{\varphi} \partial_{S}\rdt{f}(s_0).
\end{gather*}
On the other hand, to compute $\StLin\big(\CS(\varphi)(F)\big)$, one must take derivatives as in \eqref{eqn:derivatives} of the equations for $\CS(\varphi)$ found in \eqref{eqn:CS_formulas}:
\[\rdt{g}(s_0)=\voutp{\varphi}\left(\rdt{f}(s_0)\right),\quad\dyn{g}(i,s_0)=\dyn{f}\left(\vinp{\varphi}\left(i,\rdt{f}(s_0)\right),s_0\right).\]
By mildly tedious but straightforward chain rule calculations, quite analagous to those in Lemma~\ref{lemma:derivative_comp_wd}, one finds that these derivatives agree with the three matrices above.

\end{proof}

\section{Extended example}\label{sec:extended_example}

In this example, we string together eight discrete dynamical systems. Let's refer to the following wiring diagram as $\varphi\colon W,X,X,X,X,X,X,Y\too Z$:
\begin{equation}\label{eqn:long_wiring_diagram}
\begin{tikzpicture}[oriented WD, baseline=(X.center), bbx=.6em, bb min width=1.5em, bby=1ex, bb port sep=1]
	\node[bb={2}{1}] (W) {$\scriptstyle W$};
	\node[bb={1}{1}, right=2 of W] (X1) {$\scriptstyle X$};
	\node[bb={1}{1}, right=2 of X1] (X2) {$\scriptstyle X$};
	\node[bb={1}{1}, right=2 of X2] (X3) {$\scriptstyle X$};
	\node[bb={1}{1}, right=2 of X3] (X4) {$\scriptstyle X$};
	\node[bb={1}{1}, right=2 of X4] (X5) {$\scriptstyle X$};
	\node[bb={1}{1}, right=2 of X5] (X6) {$\scriptstyle X$};
	\node[bb={1}{2}, right=2 of X6] (Y) {$\scriptstyle Y$};
	\node[bb={1}{1}, fit={($(W.south west)+(-2,-1)$) ($(Y.north east)+(2,2)$)}, bb name={$Z$}] (Z) {};
	\draw[ar] (Z_in1') to (W_in1);
	\draw (W_out1) to (X1_in1);
	\draw (X1_out1) to (X2_in1);
	\draw (X2_out1) to (X3_in1);
	\draw (X3_out1) to (X4_in1);
	\draw (X4_out1) to (X5_in1);
	\draw (X5_out1) to (X6_in1);
	\draw (X6_out1) to (Y_in1);
	\draw[ar] (Y_out1) to (Z_out1');
	\draw let \p1=(W.south west), \p2=(Y.south east), \n1={\y2-\bby}, \n2=\bbportlen in
		(Y_out2) to[in=0] (\x2+\n2,\n1) -- (\x1-\n2,\n1) to[out=180] (W_in2);
\end{tikzpicture}
\end{equation}
Suppose that each interior box is inhabited by a discrete dynamical system. Below, the transition diagrams are shown: the leftmost one, $w\in\DS(W)$, is shown left; the middle six are all the same, $x\in\DS(X)$, and are shown in the middle; and the rightmost one, $y\in\DS(Y)$, is shown right:
\[
\begin{tikzpicture}[baseline=(st2.south),
	box/.style={
		rectangle,
		minimum size=6mm,
		very thick,
		draw=black, 
		top color=white, 
		bottom color=white!50!black!20, 
		align=center,
		font=\normalfont
	}]
	\node [box] at (0,0) (st1) {\footnotesize State: a\\\tiny Readout: T};
	\node [box, below=.75 of st1] (st2) {\footnotesize State: b\\\tiny Readout: F};
	\draw[->,thick, bend left=10] (st1) edge["{\tiny FF, FT}"] (st2);
	\draw[->,thick, bend left=10] (st2) edge ["{\tiny FF, TF}"] (st1);
	\draw[->,thick] (st1) edge [out=200, in=160,looseness=2,"{\tiny TT, TF}"] (st1);
	\draw[->,thick] (st2) edge [out=200, in=160,looseness=2,"{\tiny TT, FT}"] (st2);
	\node[above=2.2 of st2] {$w\in\DS(W)$};
\end{tikzpicture}
\quad
\begin{tikzpicture}[baseline=(st3.south),
	box/.style={
		rectangle,
		minimum size=6mm,
		very thick,
		draw=black, 
		top color=white, 
		bottom color=white!50!black!20, 
		align=center,
		font=\normalfont
	}]
	\node [box] at (0,0) (st1) {\footnotesize State: 1\\\tiny Readout: T};
	\node [box, below right = .75 and -.25 of st1] (st3) {\footnotesize State: 3\\\tiny Readout: F};
	\node [box, above right=.75 and -.25 of st3] (st2) {\footnotesize State: 2\\\tiny Readout: F};
	\draw[->,thick, bend left=10] (st1) edge["\tiny F"] (st2);
	\draw[->,thick, bend left=10] (st2) edge["\tiny F" near start] (st3);
	\draw[->,thick, bend left=10] (st3) edge["\tiny T" near end] (st1);
	\draw[->,thick] (st1) edge [out=200, in=160,looseness=2,"\tiny T"] (st1);
	\draw[->,thick] (st2) edge [out=20, in=-20,looseness=2,"\tiny T"] (st2);
	\draw[->,thick] (st3) edge [out=200, in=160,looseness=2,"\tiny F"] (st3);
	\node[above=2.2 of st3] {$x\in\DS(X)$};
\end{tikzpicture}
\quad
\begin{tikzpicture}[baseline=(st4.south),
	box/.style={
		rectangle,
		minimum size=6mm,
		very thick,
		draw=black, 
		top color=white, 
		bottom color=white!50!black!20, 
		align=center,
		font=\normalfont
	}]
	\node [box] at (0,0) (st1) {\footnotesize State: p\\\tiny Readout: TT};
	\node [box, right = 1 of st1] (st2) {\footnotesize State: q\\\tiny Readout: TF};
	\node [box, below = .75 of st2] (st3) {\footnotesize State: r\\\tiny Readout: FT};
	\node [box, below = .75 of st1] (st4) {\footnotesize State: s\\\tiny Readout: FF};
	\draw[->,thick, bend right] (st2) edge["\tiny F"'] (st1);
	\draw[->,thick, bend right] (st3) edge["\tiny T"'] (st2);
	\draw[->,thick, bend right] (st4) edge["\tiny F"'] (st3);
	\draw[->,thick, bend left] (st4) edge["\tiny T"] (st1);
	\draw[->,thick] (st3) edge [out=20, in=-20,looseness=2,"\tiny F"] (st3);
	\draw[->,thick] (st2) edge [out=20, in=-20,looseness=2,"\tiny T"] (st2);
	\draw[->,thick] (st1) edge [out=200, in=160,looseness=2,"{\tiny T, F}"] (st1);
	\node[above=2.2 of $(st3.north)!.5!(st4.north)$] {$y\in\DS(Y)$};
\end{tikzpicture}
\]

The composed dynamical system $z\coloneqq\DS(\varphi)(w,x,x,x,x,x,x,y)$ has $2\cdot 3^6\cdot 4=5832$ states. One can imagine it as a stack of eight parallel layers: a $w$, then six $x$'s, then a $y$, each sending information to the next---with feedback at the end---i.e., the readout of one layer sent forward as the state-change command for the next. A composite state is a choice of one state in each layer. 

Rather than write out the 5832-state transition diagram of the layered system, suppose we just want to understand its steady states. We begin by writing down the matrix of steady state sets associated to each system, e.g., $\StstS(w)\in\MatS(w)$. They are shown below with row- and column-labels to keep things clear:
\[
\begin{array}{c| ll}
\multicolumn{3}{c}{\StstS(w)=}\\
\multicolumn{3}{c}{}\\
\parbox{.55in}{\tiny ~\hspace{.13in}Outputs:\\Is fixed by:}
&T&F\\\hline
TT&\{a\}&\{b\}\\
TF&\{a\}&\emptyset\\
FT&\emptyset&\{b\}\\
FF&\emptyset&\emptyset
\end{array}
\qquad\quad
\begin{array}{c| ll}
\multicolumn{3}{c}{\StstS(x)=}\\
\multicolumn{3}{c}{}\\
\parbox{.55in}{\tiny ~\hspace{.13in}Outputs:\\Is fixed by:}
&T&F\\\hline
T&\{1\}&\{2\}\\
F&\emptyset&\{3\}\\&&\\&&
\end{array}
\qquad\quad
\begin{array}{c| llll}
\multicolumn{5}{c}{\StstS(y)=}\\
\multicolumn{3}{c}{}\\
\parbox{.55in}{\tiny ~\hspace{.13in}Outputs:\\Is fixed by:}
&TT&TF&FT&FF\\\hline
T&\{p\}&\{q\}&\emptyset&\emptyset\\
F&\{p\}&\emptyset&\{r\}&\emptyset\\&&\\&&
\end{array}
\]
We will abbreviate tuples by removing parentheses and commas, so that the two-element set $\{(a,b,c),(c,e)\}$ is written $\{abc,ce\}$. 

We can now use the steady-state formulas~\eqref{eqn:Mat_sets_parallel}~and~\eqref{eqn:Mat_sets_formulas} to compute the steady state-set matrix $\StstS(z)$. However, using these fully general formulas is not always the most efficient approach. Following Examples~\ref{ex:serial_works}~and~\ref{ex:trace_works}, we can instead multiply the matrices for the serial composition 
\[\StstS(w)\StstS(x)^6\StstS(y),\]
and then trace the result. The fact that this will work comes down to the functoriality of $\StstS$ (proven as Theorem~\ref{thm:Mat_sets_sym_mon_func}).

The reader should try calculating the matrix multiplication $\StstS(x)\StstS(x)$. One can then see that the serial composition of the middle $x$'s, namely $\StstS(x)^6$, is
\[
\begin{array}{c| ll}
\parbox{.55in}{\tiny ~\hspace{.13in}Outputs:\\Is fixed by:}
&T&F\\\hline
T&\{111111\}&\{111112,111123,111233,112333,123333,233333\}\\
F&\emptyset&\{3333333\}
\end{array}
\]
We continue in this way, and calculate $\StstS(w)\StstS(x^6)\StstS(y)$:
\[
\begin{array}[t]{c| llll}
	\multicolumn{5}{c}{\StstS(wx^6y)=}\\
\parbox{.55in}{\tiny ~\hspace{.13in}Outputs:\\Is fixed by:}
	&TT&TF&FT&FF\\\hline\\
TT
	&\left\{\parbox{1.4in}{$a111111p,a111112p,\\a111123p,a111233p,\\a112333p,a123333p,\\a233333p,b333333p$}\right\}
	&\{a111111q\}
	&\left\{\parbox{1.4in}{$a111112r,a111123r,\\a111233r,a112333r,\\a123333r,a233333r,\\b333333r$}\right\}
	&\emptyset
	\\\\
TF
	&\left\{\parbox{1.4in}{$a111111p,a111112p,\\a111123p,a111233p,\\a112333p,a123333p,\\a233333p$}\right\}
	&\{a111111q\}
	&\left\{\parbox{1.4in}{$a111112r,a111123r,\\a111233r,a112333r,\\a123333r,a233333r$}\right\}
	&\emptyset
	\\\\
FT
	&\{b333333p\}
	&\emptyset
	&\{b333333r\}
	&\emptyset
	\\\\
FF
	&\emptyset
	&\emptyset
	&\emptyset
	&\emptyset
\end{array}
\]
Finally, we take the partial trace of this matrix to obtain the desired result, $\StstS(z)$:
\[
\begin{array}[t]{c| ll}
\multicolumn{3}{c}{\StstS(x)=}\\
\parbox{.55in}{\tiny ~\hspace{.13in}Outputs:\\Is fixed by:}
&T&F\\\hline\\
T
	&\left\{\parbox{1.4in}{$a111111p,a111112p,\\a111123p,a111233p,\\a112333p,a123333p,\\a233333p,b333333p,\\a111111q$}\right\}
	&\left\{\parbox{1.4in}{$a111112r,a111123r,\\a111233r,a112333r,\\a123333r,a233333r,\\b333333r$}\right\}
	\\\\
F
	&\{b333333p\}
	&\{b333333r\}
\end{array}
\]
This matrix tells us the steady states of the composite dynamical system $z$. 

Let's interpret the results by invoking our image of $z$ as a layered system of $w$, the six $x$'s, and $y$. If we input 'T' to the system our matrix tells us that 'a111223p' is a steady state outputting 'T' and that 'b333333r' is a steady state outputting 'F'. These 8-character strings are the composite states; one in each layer.

It is easy to check that when $y$ is in state p, the system $z$ outputs 'T' and that when $y$ is in state r, the system outputs 'F'.%
\footnote{Looking at the wiring diagram \eqref{eqn:long_wiring_diagram}, the top output of $y$ is output by the system and the second is fed back to $w$, so if $y$ outputs 'FT' then $z$ outputs 'F'.} 
Thus it suffices to see that these two states are fixed by an input of T. We leave it to the reader to check that the output of every layer indeed leaves the state of the next layer fixed.

\bibliographystyle{alpha}
\bibliography{Library}
\end{document}